\newtheorem{thm}{Theorem}[section]
\newtheorem{prop}[thm]{Proposition}
\newtheorem{defin}[thm]{Definition}
\newtheorem{lema}[thm]{Lemma}
\newtheorem{rmk}[thm]{Remark}
\newtheorem{ex}[thm]{Example}
\def\OX{\mathcal{O}}
\def\Ee{\mathcal{E}}
\def\T{\mathcal{T}}
\def\K{\mathbb{K}}
\def\C{\mathbb{C}}
\def\grass{\mathbb{G}}
\def\P{\mathbb{P}}
\def\Hess{\operatorname{Hess}}
\def\hess{\operatorname{hess}}
\def\codim{\operatorname{codim}}
\def\reg{\operatorname{reg}}
\def\Gor{\operatorname{Gor}}
\def\Ann{\operatorname{Ann}}
\def\Hilb{\operatorname{Hilb}}
\def\rk{\operatorname{rk}}
\def\Sing{\operatorname{Sing}}
\begin{document}
\title{On the Lefschetz locus in $\Gor(1,n,n,1)$}
\author[Bezerra]{Lenin Bezerra}
\address{\sc Lenin Bezerra\\
Universidade Federal de Pernambuco, Brasil}
\email{lenin.bezerra@ufpe.br}

\author[Ferrer]{Viviana Ferrer}
\address{\sc Viviana Ferrer\\
Universidade Federal Fluminense, Rua Alexandre Moura 8 - S\~ao Domingos, 24210-200 Niter\'oi, Rio de Janeiro, Brasil}
\email{vferrer@id.uff.br}

\author[Gondim]{Rodrigo Gondim}
\address{\sc Rodrigo Gondim\\
Universidade Federal Rural de Pernambuco, av. Dom Manoel de Medeiros s/n, Dois Irm\~aos, 52171-900 Recife, Pernambuco, Brasil}
\email{rodrigo.gondim@ufrpe.br}

\thanks{2010 Mathematics Subject Classification. 14E05, 16E65 (primary), 14C17 (secondary).
Keywords and phrases: developable cubics, Artinian Gorenstein algebra, Lefschetz property. 
The first author was supported by CAPES. 
The last author was partially supported by CNPQ. Ferrer acknowledges support from CNPq (Grant number 408687/2023-1).}

%


\begin{abstract}
We study two special families of cubic hypersurfaces with vanishing hessian in $\mathbb{P}^N$, obtaining rational parametrizations and computing their degree in $\mathbb{P}(S_3)$. For $N \leq 6$, these two families exhaust the locus of cubics with vanishing hessian that are not cones. As a consequence, via Macaulay-Matlis duality, we obtain a description of the locus in $\mathrm{Gor}(1, n, n, 1)$ corresponding to those algebras that satisfy the Strong Lefschetz property, for $n \leq 7$.



\end{abstract}

\maketitle


\section{Introduction}
Artinian Gorenstein $\K$-algebras serve as models for cohomology rings with coefficients over a field $\K$ in various categories. Over the past two decades, interest from an algebraic perspective has grown, making this a fertile area of research. It is quite surprising that even in this more abstract context, the Lefschetz properties maintain a profound connection with other areas of mathematics.

Originally, S. Lefschetz proved his celebrated Hard theorem in the category of complex smooth projective varieties. Nowadays, there are versions of his theorem in the category of differential geometry, vector bundle theory, matroid theory, among others. The study of Lefschetz's Theorem and Hodge theory is until these days very influential in the understanding of the topology of projective algebraic varieties.

From the algebraic viewpoint, let $A = \bigoplus_{k=0}^{d} A_{k}$ be a standard graded Artinian $\K$-algebra, i.e. a graded $\K$-algebras with a finite number of graded components. We define the socle degree of $A$ to be  $d$ if $A_d\neq 0$. The Hilbert function of $A$ can be described as a vector $\Hilb(A) = (1, h_1, \ldots, h_d)$, where $h_k = \dim_{\K}A_{k}$.  The codimension of $A$ is $h_1$, which coincides with the embedding dimension.

We say that an Artinian $\K$-algebra has Poincar\'e duality if $\dim_{\K}A_{0} = \dim_{\K}A_{d} = 1$ and, for every $k = 1, \ldots, d-1$, the multiplication map $$A_k \times A_{d-k} \rightarrow A_{d}$$ defines  a perfect pairing. Such  algebras are precisely the Artinian Gorenstein (AG) algebras. Recall that Poincar\'e duality implies that $\dim_{\K}A_{k} = \dim_{\K}A_{d-k}$. By Macaulay-Matlis duality (see \cite{MW}), Artinian Gorenstein algebras admit,  in characteristic zero, a differential presentation $$A = Q/\Ann_{f},$$ where $f \in S = \K[x_1, \ldots, x_{n}]$ is a homogeneous polynomial of degree $d$, $Q = \K[X_1, \ldots, X_{n}]$ is the polynomial ring of differential operators acting on $S$ via $X_{i}(x_{j}) = \delta_{ij}$, and $\Ann_{f} = \{\alpha \in Q; \ \alpha(f) =0\} \subset Q$ is a homogeneous ideal. 

An abstract version of the Hard Lefschetz theorem was introduced by R. Stanley in \cite{St}, and is known as the Strong Lefschetz Property (SLP). We say that an AG algebra $A$ has the SLP if there exists an element $L \in A_{1}$ such that, for every $k \in \{1, \ldots, \lfloor \frac{d}{2} \rfloor \}$, the multiplication map  $$\cdot L^{d-2k}: A_{k} \rightarrow A_{d-k}$$ is an isomorphism.

Fixing a Hilbert function $T = (1, h_1, \ldots, h_{d-1}, 1)$, Iarrobino and Kanev, in a series of papers compiled in \cite{IK}, described the parameter space $\Gor(T)$ of Artinian Gorenstein algebras having $T$ as Hilbert function. In the very initial case where $T = (1, n, n, 1)$, $\Gor(T)$ parametrizes Artinian Gorenstein algebras of socle degree $3$ and codimension $n$. By Macaulay-Matlis duality, these algebras admit a presentation $$A = \K[X_{1}, \ldots, X_{n}]/ \Ann_f,$$ where $f \in \K[x_{1}, \ldots, x_{n}]_{3}$ is a homogeneous cubic polynomial whose derivatives are linearly independent. In \cite{MW}, J. Watanabe and T. Maeno provided  a criterion for SLP in terms of higher-order Hessians, which,  in  the particular case of socle degree $3$, states that the algebra $A = Q/ \Ann_{f}$ has the SLP if and only if the determinant of the Hessian matrix of $f$ does not vanish identically, i.e. $\hess_{f} \neq 0$.

Let $X$ denote the hypersurface defined by $f$ on $ \P^{n-1}$. Saying that the partial derivatives of $f$ are linearly independent is equivalent to saying that $X$ is not a cone. Thus, to describe -the complement of- the Lefschetz locus in $\Gor(1, n, n, 1)$, we study cubic hypersurfaces  $V(f)$, that are not cones and satisfy $\hess_{f} = 0$. This is a very classical problem tracing back to Hesse, who claimed in \cite{hesse1} and \cite{hesse2} that for any degree $d \geq 3$, a hypersurface $X = V(f) \subset \P^{N}$ has vanishing Hessian if and only if $X$ is a cone. However, Hesse's claim is false, as Sylvester pointed out by providing a correct determinantal criterion to characterize hypersurfaces with vanishing Hessian. The key step is called the Gordan-Noether theory, which, in the language of algebraic geometry, clarifies the distinction between being a cone and having vanishing Hessian determinant, proves Hesse's claim in $\P^{2}$ and $\P^{3}$, and classifies all the counter-examples in $\P^{4}$. The chapter $7$ of \cite{Ru} is a good reference to introduce this topic. 

In the particular case of cubic hypersurfaces, U. Perazzo presented the minimal counterexample for Hesse's claim: $X = V(f) \subset \P^{4}$ given by \begin{equation}\label{exhess=0notcone}
    f = xu^2 + yuv + zv^{2}.
\end{equation}
It is easy to see that $f$ is not a cone but has vanishing hessian. 

The work of Perazzo was revisited in \cite{GRu}, where the authors gave an algebro-geometric classification of cubics with  vanishing hessian in $\P^{4}, \ \P^{5}$, and   $\P^{6}$, and discussed the problems that arise in $\mathbb{P}^7$ and higher dimensions.

Hypersurfaces with vanishing hessian are developable. Developable hypersurfaces are the counterparts, in algebraic geometry, of hypersurfaces with zero curvature in differential geometry. They are ruled by linear subspaces; that is, for any point $x \in X$, there exists a $k$-linear subspace $L_{x} \subset X$ such that $x \in L_{x}$ and, moreover, the tangent space $T_{y}X$ is constant for every $y \in L_{x}$.  In \cite{FFG}, the authors proved that if $X = V(f) \subset \P^{4}$ is an irreducible cubic hypersurface not a cone, then $X$ is developable if and only if it is projectively equivalent to a linear section of the secant variety of the Veronese surface. In this same paper, they used the classification of cubic hypersurfaces that are not cones and having vanishing hessian, to describe the Lefschetz locus in $\Gor(1, 5, 5, 1)$.

In this paper, we extend the results in \cite{FFG} by describing the Lefschetz locus in $\Gor(1,6,6,1)$ and $\Gor(1,7,7,1)$. We identify two families that exhaust the algebras failing SLP, which we call the maximal and the minimal families. 
For $n\leq 7$, these families describe the irreducible components of the reduced structure of the locus of algebras in $\Gor(1,n,n,1)$ failing SLP.  
For $n=8$, however, there exists an example of an algebra failing  SLP thas is neither a member of the maximal nor the minimal family.

We now describe the structure of the paper.

In the second section, we review earlier results on cubic hypersurfaces with vanishing hessian and introduce the two families of hypersurfaces that we aim to study.  

In the third section, we recall some previous results concerning Artinian Gorenstein algebras and Lefschetz properties. In particular, we make explicit the relation between algebras having the Strong Lefschetz Property (SLP) and cubics with vanishing Hessian. We also recall the notion of Jordan types and compute the Jordan types for the two families under consideration.

In the fourth section, we describe parameter spaces for the minimal and maximal families. We also describe the parameter space of the intersection of the minimal and maximal families with cones. Additionally, the tangent space to the scheme $(hess=0)\subset \P(S_3)$ is computed at a general point of these families, showing that the maximal family is reduced, while the other families are not. 

In the last section, we describe the Lefschetz locus in $\Gor(1,6,6,1)$ and $\Gor(1,7,7,1)$.

We include an appendix consisting of the Macaulay2 codes used to compute the degree of each family.

Since we work in both geometric and algebraic settings, throughout this paper we set $n=N+1$ in order to study cubic hypersurfaces $X=V(f) \subset \P^{N}$ with $f \in \K[x_0, \ldots, x_{N}]_3$,  and the associated Artinian Gorenstein $\K$-algebra $A$ whose Hilbert function is $\Hilb(A) = (1, n, n, 1)$, provided that $X$ is not a cone.

\section{Cubics with vanishing Hessian}
In this section we introduce the basic notations and results about cubic hypersurfaces with vanishing Hessian and their geometry.

\subsection{The polar map}
  
  We will work over an algebraically closed field of characteristic zero. 

\begin{defin}\rm
  Let $X \subset \P^N$ be an irreducible projective variety. The vertex of $X$ is the closed subset
$$\operatorname{Vert}(X)= \{ p \in X | \langle p,q \rangle \subset X,\  \forall q \in X\}$$
 where if $p,q\in X$,  $\langle p,q \rangle$ denotes the line join $p,q$.
  A projective variety $X \subset \P^N$ is a cone if $\operatorname{Vert}(X) \neq \emptyset$. In this case, 
$\operatorname{Vert}(X)$ is a linear subspace of $\P^N$.
\end{defin}

Cones have vanishing Hessian determinant, but the converse is not true in general, as shows the example \ref{exhess=0notcone} in the Introduction. 

\begin{defin}\rm\label{polar}
The polar map of a hypersurface $X=V(f) \subset \P^N$ is the rational map given by the derivatives of $f$.
$$\begin{array}{c} \Phi_f:  \P^N  \dashrightarrow   \check{\P}^{N} \\
                                                             \\
  \Phi_f(p)=(\frac{\partial f}{\partial x_0}(p), \frac{\partial
f}{\partial x_1}(p),  ..., \frac{\partial f}{\partial x_N}(p) ).
\end{array}$$
The polar image of $X$ is $Z = \overline{\Phi_{f}(\P^N)}$.
\end{defin}

The next proposition clarifies the difference between a hypersurface being a cone and the vanishing of its Hessian determinant.
\begin{prop}\label{prop.hess_vs_cone}
  Let $X = V(f) \subset \P^N$ be a hypersurface, $\Phi_f$ the associated polar map, and $Z = \overline{\Phi_f(\P^N)}$ the polar image.
  \begin{enumerate}
    \item $X$ has vanishing Hessian $\Leftrightarrow$ $Z \subsetneq \check{\P}^{N}$ $\Leftrightarrow$ The partial derivatives of $f$ are algebraically dependent.
    \item $X$ is a cone $\Leftrightarrow$ there is a hyperplane $H \subset \check{\P}^{N}$ such that $Z \subset H$  $\Leftrightarrow$ The partial derivatives of $f$ are linearly dependent.
  \end{enumerate}
\end{prop}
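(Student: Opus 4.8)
The plan is to reduce everything to a single linear-algebraic object: the Jacobian matrix of the polar map. The key observation, on which both parts rest, is that the Jacobian of $\Phi_f=(f_0,\dots,f_N)$ (where $f_i=\partial f/\partial x_i$) is exactly the Hessian matrix $\Hess_f=[\partial^2 f/\partial x_i\partial x_j]$. I would open the proof by recording this identity, and by noting that the dimension of the polar image is controlled by the generic rank of this matrix.

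For part (1), I would first relate $\dim Z$ to a transcendence degree. Writing $C(Z)\subset\mathbb{A}^{N+1}$ for the affine cone over $Z$, which is the closure of the image of the affine map $(f_0,\dots,f_N)\colon\mathbb{A}^{N+1}\to\mathbb{A}^{N+1}$, its coordinate ring is generated by the $f_i$, so $\dim C(Z)=\operatorname{trdeg}_{\K}\K(f_0,\dots,f_N)$ and hence $\dim Z=\operatorname{trdeg}_{\K}\K(f_0,\dots,f_N)-1$. This immediately gives the equivalence $Z\subsetneq\check{\P}^N\Leftrightarrow\dim Z<N\Leftrightarrow f_0,\dots,f_N$ are algebraically dependent. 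To close the loop with the Hessian I would invoke the Jacobian criterion for algebraic independence, valid in characteristic zero: the forms $f_0,\dots,f_N$ are algebraically independent if and only if their Jacobian, which is precisely $\Hess_f$, has generic rank $N+1$, i.e. $\hess_f=\det\Hess_f\not\equiv 0$. Thus $\hess_f=0$ if and only if the $f_i$ are algebraically dependent, which completes the chain of equivalences in (1).

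For part (2), the governing remark is that linear dependence of the partials is equivalent to the vanishing of a directional derivative. If $\sum_i a_i f_i\equiv 0$ for some nonzero $a=(a_0,\dots,a_N)$, then $f$ is invariant under translation in the $a$-direction, so $X$ is a cone with vertex $[a]$; conversely, after a linear change of coordinates carrying a vertex to $e_0$, the form $f$ becomes independent of $x_0$, yielding such a relation. On the other side, a hyperplane $H=\{\sum_i a_i y_i=0\}$ contains $Z$ if and only if $\sum_i a_i f_i(p)=0$ for all $p$, i.e. if and only if $\sum_i a_i f_i\equiv 0$ as a form. Reading these two equivalences together shows that $X$ is a cone iff $Z$ lies in a hyperplane iff the partials are linearly dependent.

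I expect the main subtlety to be the bookkeeping between the affine and projective pictures --- the ``$+1$'' coming from the cone over $Z$ and the corresponding Euler-relation eigendirection in $\Hess_f$ --- together with the careful use of the Jacobian criterion, which is where the characteristic-zero hypothesis (already standing in the paper) is genuinely needed. The linear-dependence arguments of part (2) are elementary once the directional-derivative interpretation is in place.
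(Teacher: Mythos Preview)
Your argument is correct. The paper itself does not give a proof of this proposition at all; it simply refers the reader to \cite{CRS}. So your self-contained treatment is strictly more than what the paper provides, and the route you take --- identifying the Jacobian of $\Phi_f$ with $\Hess_f$, invoking the Jacobian criterion for algebraic independence in characteristic zero for part~(1), and reading a linear relation $\sum a_i f_i=0$ as a vanishing directional derivative for part~(2) --- is the standard one and presumably what the cited reference contains. One small remark: in part~(2) you could streamline the ``$\Rightarrow$ cone'' direction by noting that a nontrivial relation $\sum a_i f_i=0$ lets you change coordinates so that $a=e_0$, whence $\partial f/\partial x_0\equiv 0$ and $f$ does not involve $x_0$; this avoids the affine--projective bookkeeping about lines through $[a]$.
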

  
\begin{proof}
 See \cite{CRS}.
\end{proof}

The cubic (\ref{exhess=0notcone}) of the introduction appears  in the works of Gordan and Noether \cite{GN} and Perazzo \cite{Pe}, where it is called 
{\it un esempio semplicissimo}, as an example of a cubic with $\hess_f = 0$ that is not a cone.

\begin{ex} \rm
Let $X = V(f) \subset \P^4$ be the irreducible hypersurface given by 
$$f = x_0x_3^2+x_1x_3x_4+x_2x_4^4$$
An easy calculation shows that $X$ is not a cone. On the other hand, 
$f_0f_2=f_1^2$ is an algebraic relation among the partial derivatives of $f$, so $\hess_f = 0$.   
\end{ex}

The next result will be useful in the sequel. Its proof can be found in the original work of Perazzo, see \cite{Pe} in the cubic case, and for general 
degree in \cite[pg.21]{ZakHesse}. 

\begin{prop}\label{Zdual_lugar_singular}
  Let $X = V(f) \subset \P^N$ be a hypersurface with vanishing hessian, and $Z^*$ the dual of the polar image of $X$. Then
  $$Z^* \subset \Sing(X)_{red}.$$

\flushright{$\square$}

\end{prop}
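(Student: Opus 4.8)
The plan is to reduce to a general point of $Z^*$ and translate everything into linear algebra on the Hessian, after which Euler's identity finishes the argument. Since $\Sing(X)_{red}$ is closed, it suffices to prove that a general point $[c]\in Z^*$ satisfies $\nabla f(c)=0$. Recall that $Z^*$ is the closure of the set of hyperplanes tangent to $Z=\overline{\Phi_f(\P^N)}$ at a smooth point; identifying a point $[c]\in\P^N$ with the hyperplane $\{\xi : \langle c,\xi\rangle=0\}\subset\check{\P}^{N}$, the point $[c]$ is such a tangent hyperplane at $y\in Z_{sm}$ exactly when the embedded tangent space $T_yZ$ is contained in it.

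First I would compute $T_yZ$ in terms of the Hessian. Writing $y=\Phi_f(p)=[\nabla f(p)]$ for a point $p$ in the fiber over $y$, the differential of the affine polar map $x\mapsto\nabla f(x)$ at $p$ is exactly $\Hess_f(p)$, so the affine tangent cone to $Z$ at $y$ is $\operatorname{Im}\Hess_f(p)$. Since $\Hess_f$ is symmetric, the condition $T_yZ\subset\{\langle c,\cdot\rangle=0\}$ becomes $c\perp\operatorname{Im}\Hess_f(p)$, i.e. $c\in\ker\Hess_f(p)$. As $\ker\Hess_f(p)=(\hat T_yZ)^{\perp}$ depends only on $y$, this subspace $K:=\ker\Hess_f(p)$ is constant for general $p$ in the fiber. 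Thus, up to closure, $[c]\in Z^*$ means $c\in K$ for the fiber of $\Phi_f$ over a general smooth point of $Z$.

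The key geometric input is Gordan--Noether theory: because $\hess_f=0$, the general fiber $L=\overline{\Phi_f^{-1}(y)}$ is a \emph{linear} subspace of $\P^N$, and $K=\ker\Hess_f(p)$ lies tangent to $L$, spanning $\hat L$ together with $\langle p\rangle$ (the latter using $\Hess_f(p)\,p=(d-1)\nabla f(p)\neq 0$). In particular $\P(K)\subset L$, so the point $[c]$ obtained above actually lies on the fiber $L$ itself. This is the step I expect to be the crux, since it rests on the linearity of the fibers and on the Hessian kernel being tangent to (hence inside) them. One must also handle the case where $[c]$ is a special point of $L$: by a semicontinuity argument, the constant family $\ker\Hess_f(p)=K$ for general $p\in L$ satisfies $K\subseteq\ker\Hess_f(q)$ for every $q\in L$, by passing to the limit $\Hess_f(q)v=\lim_{p\to q}\Hess_f(p)v=0$ for $v\in K$.

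Finally I would apply Euler's identity. Setting $g=\langle c,\nabla f\rangle$, a form of degree $d-1$, one has $\nabla g(x)=\Hess_f(x)\,c$, and evaluating at the point $c\in L$ gives $\nabla g(c)=\Hess_f(c)\,c=(d-1)\,\nabla f(c)$ by Euler's relation $\Hess_f(x)\,x=(d-1)\nabla f(x)$. But $c\in K\subseteq\ker\Hess_f(c)$ from the previous step, so $\Hess_f(c)\,c=0$, whence $\nabla f(c)=0$ and $[c]\in\Sing(X)$. Taking closures yields $Z^*\subset\Sing(X)_{red}$, as claimed.
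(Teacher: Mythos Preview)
Your argument is correct. The paper itself gives no proof of this proposition, deferring instead to Perazzo (for cubics) and Zak (for arbitrary degree); the classical argument in those references is essentially the one you outline, hinging on the Gordan--Noether identity that forces the general polar fiber to be linear and hence to contain $\P(K)$. One small simplification: once you know $[c]\in L$, the semicontinuity step is unnecessary --- either $\nabla f(c)=0$ and you are done, or $[\nabla f(c)]=y$, in which case $\operatorname{Im}\Hess_f(c)\subseteq T_{y}\hat Z$ directly yields $K\subseteq\ker\Hess_f(c)$, and Euler's relation concludes exactly as you wrote.
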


\subsection{Perazzo hypersufaces}

A Perazzo cubic hypersurface is an irreducible $X=V(f) \subset \P^N$
that up to a projective transformation has a canonical form:
\[f=\displaystyle \sum_{i=0}^k x_ig_{i}+h. \]
Where $g_{i} \in \K[x_{N-m+1},\ldots, x_N]_2$ are linearly independent and algebraicaly dependent and $h\in \K[x_{k+1},\ldots,x_N]_3$ and $k+m\leq N$.

\subsubsection{The Perazzo map}

The notion of the Perazzo map was implicitly introduced in \cite{Pe}, see also \cite{GRu}. 

\begin{defin} \rm
Notation as in Definition \ref{polar}. Let $X = V(f) \subset \P^N$ be a reduced hypersurface with vanishing hessian.  {\it The Perazzo map of $X$} is the rational map:
$$\begin{array}{cccc} \mathcal{P}_X: & \P^N & \dashrightarrow & \mathbb{G}(\codim(Z),N+1) \\
\ & p & \mapsto & (T_{\Phi(p)}Z)^* \end{array}$$

defined in the open set $\mathcal{U} = \Phi^{-1}(Z_{\reg})$,
where $Z_{\reg}$ is the locus of smooth  points of $Z$.

The image of the Perazzo map will be denoted by $W_X=\overline{\mathcal{P}_X(\P^N)} \subset \mathbb{G}(\codim(Z),N+1)$, or simply by $W$,  and its dimension $\mu = \dim W$ is called the {\it Perazzo rank of $X$}. 
\end{defin}

 We are particularly interested in the case where $\codim(Z)=1$; and therefore, we shall assume this condition from now on. In this case, we have:


\[\begin{array}{ccccc}
\mathcal{P}_X:    \P^N  &  {\dashrightarrow} &   \P^N  
       \end{array}\]
and $W_X=Z^*$.

From \cite[Theorem 3.1]{GRu}, if $X$ is not a cone, then 
\begin{equation}\label{N>=}
    \mu = \dim (Z^*) \leq \frac{N-2}{2}.
\end{equation}

The general fiber of the Perazzo map is linear, of dimension $N-\mu$, see \cite[Theorem 2.5]{GRu}. 
Some hypersurfaces of interest have special Perazzo fibers, in the sense of the following definition.

\begin{defin}\rm
An irreducible cubic hypersurface $X\subset\P^N$ with vanishing hessian, not a cone, will be called a {\it Special Perazzo Cubic Hypersurface} if the general fibers of its Perazzo map determine a congruence of linear spaces passing through
a fixed $\P^{N-\mu-1}$. 
\end{defin}

From \cite[Theorem 3.3]{GRu}, if  $X$ is a special Perazzo cubic hypersurface, 
 then $Z^*$ is a hypersurface of the linear span   $\langle Z^* \rangle = \P^k \subset \Sing (X) $, that is $k=\mu+1$. 

We distinguish two types of special Perazzo cubic hypersurfaces.

\begin{defin}
A cubic hypersurface with vanishing hessian, not a cone, with $\codim (Z) = 1$ will be called a {\it Minimal Cubic Hypersurface} if $\mu = \dim Z^* = 1.$
\end{defin}
From \cite[Lemma 2.10]{GRu}, if $\mu = \dim Z^* = 1,$ then $X$ is a special Perazzo cubic hypersurface.

\begin{defin}
A cubic hypersurface with vanishing hessian, not a cone, with $\codim (Z) = 1$ will be called a {\it Maximal Cubic Hypersurface} if $$2\dim(Z^*)+2=N,$$ i.e. if $\dim(Z^*)$ is maximal, see (\ref{N>=}).
\end{defin}
From \cite[Theorem 3.1, and Theorem 3.3]{GRu}, Maximal Cubic Hypersurfaces are special Perazzo cubic hypersurfaces.

From \cite[Theorem 4.8]{GRu}, if $X=V(f) \subset \P^{N}$ is a special Perazzo hypersurface, then $Z^*$  is an irreducible component of a determinantal hypersurface $\Delta \subset \P^{\mu +1}$ in its linear span, in particular, $\langle Z^* \rangle \subsetneq \P^{N}$. (recall that $\mu+1<N$).

If $X \subset \P^{N}$, with $N \leq 6$ is a cubic hypersurface not a cone with vanishing hessian, then $X$ is a special cubic Perazzo hypersurface, in particular, $\codim (Z) = 1$ and $\langle Z^* \rangle \subset \Sing (X)$. If $N \geq 7$, there are examples of cubic hypersurfaces with vanishing hessian that are not special Perazzo hypersurfaces and also examples with $\codim(Z)=1$, see \cite[Section 6]{GRu}, but in all of these examples we still have $\langle Z^* \rangle \subset \Sing (X)$, which is equivalent to Perazzo's canonical form.


\begin{rmk}
For $N\leq 6$ we always have $\codim(Z)=1$ (see \cite[Theorem 5.4 and Theorem 5.7]{GRu}). For $N>6$ there are examples of cubics with vanishing Hessian such that $\codim(Z)>1$ (see \cite[Example 3]{GRu}). 
The condition $\langle Z^* \rangle \subset \Sing (X)$ is equivalent to saying that $Z$ is a cone.
\end{rmk}

On the other hand, for any cubic hypersurface $X=V(f)\subset \P^N$, if there exists a linear space $\P^k \subset \Sing (X) $, then from \cite[Proposition 4.1]{GRu}, $f$ is projectively equivalent to: 

\[f=\displaystyle \sum_{i=0}^k x_ig_{i}+h. \]

Here $g_{i} \in \K[x_{N-m+1},\ldots, x_N]_2$ and $h\in \K[x_{k+1},\ldots,x_N]_3$ and $k+m\leq N$.  Notice that $2\leq m \leq k$ implies $\hess_f=0$ since in this case the partial derivatives of $f$ are algebraically dependent. We recall that the algebraic dependence of $g_i$ is equivalent to $\langle Z^* \rangle \subset \Sing (X).$

\subsection{The two extremal families}\label{thetwofamilies}

It is easy to see that the form $f$ is {\it minimal} if $m=k=2$ and that $f$ is {\it maximal} if $N=2k=2m$. These two families correspond to special Perazzo cubic forms.

\begin{enumerate}
    \item {\it The minimal family}

The minimal family of special Perazzo cubics consists of $f\in \K[x_0,\ldots, x_N]$ with $\dim Z^*=1$. In this case $f$ is projectively equivalent to:
\begin{equation}\label{min}
    f=x_0g_{0}+x_1g_{1}+x_2g_{2}+h
\end{equation}
Here $g_i\in \K[x_{N-1},x_N]$ and $h\in \K[x_3,\ldots,x_N]$. Since $X$ is a special Perazzo hypersurface, we have $Z^*\subset \langle Z^* \rangle=\P^2\subset \Sing(X)$.

\item {\it The maximal family}

The maximal family of special Perazzo cubics consists of $X=V(f) \subset \P^{2k}$ with $\dim Z^*=k-1$. Therefore, 
$Z^*\subset \langle Z^*\rangle=\P^k \subset \Sing(X)$. Putting $N=2k$, we have that $f\in \K[x_0,\ldots, x_N]$ is projectively equivalent to
\begin{equation}\label{max}
    f=x_0g_{0}+x_1g_{1}+\ldots+x_kg_{k}+h.
\end{equation}
Here $g_{i},h\in \K[x_{k+1},\ldots,x_N]$.

\begin{rmk}\label{rmkI^2}
We have the following remarks about these two families:

\begin{enumerate}
    \item The closure of the maximal family coincides with the cubics in $I^2$, for some ideal of a $k$-plane $I$. Indeed, let $I=\langle x_{k+1},\ldots,x_N \rangle$ be such an ideal,  the degree three part of $I^2$ is $\textrm{Sym}_{1}(x_0,\dots ,x_k)\otimes \textrm{Sym}_{2}(I)\oplus \textrm{Sym}_{3}(I)$. 
So a cubic $f\in I^2$ is of the form (\ref{max}).
This description will be useful for constructing a parameter space for cubic polynomials within the maximal family; see Section \ref{parameterMax}. 

    \item    There exist cubics whose canonical form is in the closure of maximal family, but in fact belongs to the minimal family, in other  words, there are cubics in the maximal family that specializes to the minimal family.  To illustrate this, consider the cubic
$$f = x_{0}x_{4}^{2} + x_{1}x_{4}x_{5} + x_{2}x_{5}^{2}+x_{3}x_{6}^{2} \ \in \K[x_{0}, \ldots, x_{6}].$$
At a first glance, we see  that $f \in (x_{4}, x_{5},x_{6})^{2}$, hence $\P^{3} = V(x_{4}, x_{5},x_{6}) \subset \Sing (X)$, and therefore $\hess_{f} = 0$. On the other hand, explicit computations show us that $\dim (Z^*) = 1$, and $Z^* \subset \P^{2} = V(x_3, x_4, x_5, x_6) \subset \P^{3}$, so $f$ is in the minimal family. For more details, see \cite{GRu}.

In fact, the cubic $f$ can be obtained as the limit as $t\rightarrow 0$ of the following family (consisting of maximal cubics for $t\neq 0$): $$f_t = x_{0}x_{4}^{2} + x_{1}x_{4}x_{5} + x_{2}(x_{5}^{2}+tx_{6}^{2})+x_{3}x_{6}^{2}.$$
\end{enumerate}
\end{rmk}

\begin{rmk}\label{Ns}
Let $X=V(f) \subset \mathbb{P}^{N}$ be a cubic hypersurface with vanishing hessian, not a cone. By \cite{GRu}, we have the following:
\begin{itemize}
    \item For $N=4$, Theorem 5.1 states that $f$ is maximal. Conversely, by Theorem 5.2, we have  dim$(Z^{*}) = 1$. Therefore, in this case, the minimal and maximal families coincide. 
    
    \item For $N=5$, by Theorem 5.3,  all cubic hypersurfaces with vanishing hessian, not cones, belong to the minimal family.

    \item For $N=6$, we have dim$(Z^{*}) \leq 2$. If dim$(Z^{*}) = 1$, then $f$ is minimal, and if dim$(Z^{*}) = 2$, $f$ is maximal.
 
    \item  For $N\geq 7$, the authors provide examples of cubic hypersurfaces with vanishing hessian, not cones,  which are not special Perazzo hypersurfaces. 
\end{itemize}
\end{rmk}

\end{enumerate}

\section{Artinian Gorenstein algebras and Lefschetz properties}
 Recall from the Introduction that if  $A$ be an Artinian standard graded $\K$-algebra, then $A$ has a decomposition $A = \bigoplus_{i=0}^{d}A_i$, as a sum of finite dimensional $\K$-vector spaces with $A_d \neq 0$. Denoting by $h_{i} = \dim_{\K}A_{i}$, the \textit{Hilbert vector} of $A$ is the vector $\Hilb (A) = (1, h_{1}, \ldots, h_{d})$. The integer $d$ is called \textit{socle degree} of $A$. An Artinian algebra is a Gorenstein algebra if and only if $\textrm{dim}_{\K}A_d = 1$ and the bilinear pairing $$A_i \times A_{d-i} \rightarrow A_d$$ induced by the multiplication map is non-degenerate for $0 \leq i \leq d$. In this case there is  an isomorphism $A_i \simeq \textrm{Hom}_{\K}(A_{d-i},A_d)$ for all $i=0,\ldots,d$. In particular, $\textrm{dim}_{\K}A_i =\textrm{dim}_{\K}A_{d-i}$, so the Hilbert vector of $A$ is symmetric, i.e., $h_{i}=h_{d-i}$ for every $i = 0, \ldots, \lfloor \frac{d}{2} \rfloor$.

\subsection{Lefschetz properties and Macaulay-Matlis duality}
\begin{defin}
Let $A$ be a graded Artinian $\K$-algebra. We say that $A$ has  the \textit{Strong Lefschetz Property} (\textit{SLP}), if there exists an element $L \in A_1$ such that the multiplication map $$\cdot L^k:A_i \rightarrow A_{i+k}$$ has full rank for all integers $0\leq i \leq d-1$ and $1 \leq k \leq d-i$. In this case $L$ is called a Strong Lefschetz Element. 
\end{defin}

Consider the polynomial ring 
$S=\K[x_0,\ldots,x_N]$ as a module over the algebra  $Q = \K[X_0,\ldots,X_N]$ where the action is given by differentiation, i.e.,  $X_i = \frac{\partial}{\partial x_i}$. If $f \in S_{d}$ is a homogeneous polynomial of degree $d \geq 1$, the \textit{annihilator ideal of $f$} is the homogeneous ideal $\Ann_{f} = \{\alpha \in Q; \ \alpha(f) =0\}$.
The ideal $\Ann_{f}$ is also called the Macaulay dual of $f$.

By the theory of inverse systems, we obtain the following characterization of standard graded Artinian Gorenstein $\K$-algebras:
\begin{thm}\label{doubleann}
(Double annihilator theorem of Macaulay). Let $I$ be an ideal of $Q$ such that $Q/I$ is a standard graded Artinian $\K$-algebra of socle degree $d$. Then $Q/I$ is Gorenstein if and only if there exists $f \in S_{d}$ such that $I = \Ann_{f}$.
\end{thm}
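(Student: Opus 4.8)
The plan is to exploit the perfect \emph{apolarity pairing} between the graded pieces of $Q$ and $S$. For each $e \ge 0$, differentiation gives a bilinear map
$$Q_e \times S_e \to \K, \qquad (\alpha, g) \mapsto \alpha(g),$$
since $\alpha(g)$ is a constant when $\deg\alpha = \deg g$. On monomials one computes $X^a(x^b) = \left(\prod_i a_i!\right)\delta_{ab}$ for $|a|=|b|=e$, so (in characteristic zero) the monomial bases are dual up to nonzero scalars and the pairing is perfect. The whole argument rests on the resulting non-degeneracy statement: a form $g \in S_{e}$ is zero if and only if $\beta(g) = 0$ for every $\beta \in Q_{e}$.

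For the implication $(\Leftarrow)$, suppose $I = \Ann_f$ with $f \in S_d$, $f \neq 0$, and set $A = Q/\Ann_f$. I would introduce the inverse system $M = Q\cdot f \subset S$, a finite-dimensional graded $Q$-submodule concentrated in degrees $0,\dots,d$. The map $Q_e \to S_{d-e}$, $\alpha \mapsto \alpha(f)$, has kernel $(\Ann_f)_e$ and image $M_{d-e}$, giving graded isomorphisms $A_e \cong M_{d-e}$; in particular $A_d \cong M_0 = \K$, so $A$ is Artinian of socle degree $d$ with one-dimensional top. To see that the multiplication pairing $A_e \times A_{d-e} \to A_d \cong \K$ is non-degenerate, note it sends $(\bar\alpha,\bar\beta)$ to $(\alpha\beta)(f) = \beta(\alpha(f))$. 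If $\bar\alpha$ pairs to zero with every $\bar\beta \in A_{d-e}$, then $\beta(\alpha(f)) = 0$ for all $\beta \in Q_{d-e}$, and since $\alpha(f) \in S_{d-e}$, apolarity non-degeneracy forces $\alpha(f) = 0$, i.e. $\bar\alpha = 0$. Hence $A$ is Gorenstein.

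For $(\Rightarrow)$, suppose $A = Q/I$ is Artinian Gorenstein of socle degree $d$, so $\dim_\K A_d = 1$ and $I_d \subsetneq Q_d$ has codimension one. Under the perfect pairing $Q_d \times S_d \to \K$, the annihilator $I_d^{\perp} \subset S_d$ is one-dimensional; I would let $f$ be a generator, so $f \neq 0$ and $\alpha(f) = 0$ for all $\alpha \in I_d$. First I would check $I \subseteq \Ann_f$: for homogeneous $\alpha \in I_e$ with $e \le d$ and any $\beta \in Q_{d-e}$, the product $\beta\alpha$ lies in $I_d$, so $\beta(\alpha(f)) = (\beta\alpha)(f) = 0$, and apolarity non-degeneracy gives $\alpha(f) = 0$ (the cases $e > d$ being trivial). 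This yields a graded surjection $\pi\colon A \twoheadrightarrow Q/\Ann_f =: A'$, which is an isomorphism in degree $d$ because $I_d$ and $(\Ann_f)_d = \langle f\rangle^{\perp}$ both have codimension one and $I_d \subseteq (\Ann_f)_d$.

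It remains to show $\pi$ is injective, and this is where I expect the essential use of the Gorenstein hypothesis. Take a nonzero homogeneous $\bar\alpha \in \ker\pi$ of degree $e$; necessarily $e < d$. Since $A$ is Gorenstein, its pairing $A_e \times A_{d-e} \to A_d$ is perfect, so some $\bar\gamma \in A_{d-e}$ has $\bar\alpha\bar\gamma \neq 0$ in $A_d$. But $\pi(\bar\alpha\bar\gamma) = \pi(\bar\alpha)\pi(\bar\gamma) = 0$, contradicting the injectivity of $\pi$ in degree $d$. Hence $\ker\pi = 0$ and $I = \Ann_f$. The one genuinely delicate point throughout is handling the degree shift in the inverse-system identification $A_e \cong M_{d-e}$ and keeping the direction of the apolarity pairing straight; once that bookkeeping is fixed, both inclusions follow mechanically from non-degeneracy, with the perfect pairing of $A$ providing the only nontrivial input in the reverse inclusion.
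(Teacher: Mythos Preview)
Your argument is correct and is essentially the standard proof via the apolarity (Macaulay inverse-system) pairing: perfectness of $Q_e \times S_e \to \K$ in characteristic zero, the identification $A_e \cong (Q\cdot f)_{d-e}$ for the ``if'' direction, and for the ``only if'' direction the choice of $f$ as a generator of $I_d^{\perp}\subset S_d$ together with the Gorenstein pairing on $A$ to upgrade the inclusion $I \subseteq \Ann_f$ to an equality. The bookkeeping you flag (the degree shift and the two uses of non-degeneracy) is handled correctly.

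The paper, however, does not prove this theorem at all: its entire proof is the one-line reference ``See \cite{MW}.'' So there is no approach to compare against; you have simply supplied the details that the paper delegates to the literature, and your write-up matches the classical treatment one finds in sources such as \cite{MW} or \cite{IK}.
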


\begin{proof}
    See \cite{MW}.
\end{proof}

Let $f \in S_d$ be a homogeneous polynomial. We define
$$A_{f}=\dfrac{Q}{\Ann_{f}}$$ as the standard graded Artinian Gorenstein $\K$-algebra of socle degree $d$ associated to $f$. We assume, without loss of generality, that the codimension of $A_f$ is $N+1$, i.e. that $(\Ann_{f})_{1}=0$. This is equivalent to saying that the partial derivatives of $f$ are linearly independent, which means that $X=V(f)$ is not a cone.
For a more general discussion of Macaulay-Matliy duality see \cite{H-W}.


\begin{defin}For each $k \leq \frac{d}{2}$, let   $\{\alpha_1, \ldots, \alpha_{h_k}\}$ be  an ordered $\K$-basis of $A_k$, the \textit{$k$-th Hessian} of $f$ is the matrix $$\Hess_{f}^{k} = [\alpha_{i}\alpha_{j}(f)]_{1 \leq i,j \leq h_k}.$$ Its determinant is denoted by $\hess_{f}^{k}$. 
\end{defin}
Note that if we  consider the basis $\{X_0,\dots, X_N\}$ of $A_1$, then $\Hess_{f}^{1}$, is the classical Hessian matrix. In this case we simply write $\Hess_{f}$ for the matrix and $\hess_{f}$ for  its determinant.

\begin{rmk}
Although the definition of the $k$-th Hessian depends on the choice of a basis of $A_k$, the vanishing of the $k$-th Hessian is independent of this choice. More precisely, a change of basis multiplies the determinant by a nonzero element of the base field $\K$. 
\end{rmk}

The following theorem yields a connection between Lefschetz properties and higher Hessians:
\begin{thm}\label{wtn}
    \cite{MW} Consider $A_f$, where $f \in S_d$ is a homogeneous polynomial. An element $L=a_{0}X_{0} + \ldots + a_{N}X_{N} \in A_1$ is a strong Lefschetz element of $A_f$ if and only if $\hess_{f}^{k}(a_{0}, \ldots, a_{N}) \neq 0$ for all $0 \leq k \leq \lfloor \frac{d}{2} \rfloor$.

\flushright{$\square$}
\end{thm}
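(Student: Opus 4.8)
The plan is to reduce the strong Lefschetz condition to a nondegeneracy statement about certain bilinear forms on the graded pieces $A_k$, and then to identify those forms, up to a nonzero scalar, with the $k$-th Hessian matrices evaluated at $(a_0,\dots,a_N)$. Throughout I use the perfect Gorenstein pairing $A_k\times A_{d-k}\to A_d\cong\K$ induced by $(\gamma,\delta)\mapsto(\gamma\delta)(f)$, together with the identification $A_d\cong\K$ given by $\eta\mapsto\eta(f)$.

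First I would show that $L$ is a strong Lefschetz element if and only if the central maps $\cdot L^{d-2k}:A_k\to A_{d-k}$ are isomorphisms for every $0\le k\le\lfloor\frac{d}{2}\rfloor$. One direction is immediate, since these maps are among those required to have full rank and connect spaces of equal dimension. For the converse, assume all central maps are bijective. If $\alpha\in A_i$ satisfies $L^{j}\alpha=0$ with $2i+j\le d$, then $L^{d-2i}\alpha=L^{d-2i-j}(L^{j}\alpha)=0$, so $\alpha=0$ by injectivity of the central map; hence $\cdot L^{j}:A_i\to A_{i+j}$ is injective whenever $2i+j\le d$. The Gorenstein pairing identifies the adjoint of $\cdot L^{j}:A_i\to A_{i+j}$ with $\cdot L^{j}:A_{d-i-j}\to A_{d-i}$, so the injective case yields surjectivity whenever $2i+j\ge d$. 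In either regime the map has full rank, and the Hilbert function is thereby forced to be symmetric and unimodal. This establishes the equivalence with the central isomorphisms.

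Next I would translate the bijectivity of $\cdot L^{d-2k}$ into nondegeneracy of the bilinear form $B_k(\alpha,\beta)=(\alpha\beta L^{d-2k})(f)$ on $A_k$. Writing $\langle\cdot,\cdot\rangle$ for the perfect pairing above, one has $B_k(\alpha,\beta)=\langle\alpha,\,L^{d-2k}\beta\rangle$; since $\langle\cdot,\cdot\rangle$ is perfect and $\dim A_k=\dim A_{d-k}$, the form $B_k$ is nondegenerate exactly when $\cdot L^{d-2k}:A_k\to A_{d-k}$ is an isomorphism. (One checks as usual, using commutativity of $Q$, that $B_k$ is well defined on $A_k$, since $\beta\in(\Ann_f)_k$ forces $\beta\alpha_j(f)=\alpha_j\beta(f)=0$.)

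The main step, and the one I expect to be the real obstacle, is to identify the Gram matrix of $B_k$ with the $k$-th Hessian matrix evaluated at the coefficient point. The key is the apolarity identity: for any homogeneous $g\in S_m$ and $L=\sum a_iX_i$ one has $L^{m}(g)=m!\,g(a_0,\dots,a_N)$, obtained by expanding $L^m$ multinomially and using $X^{\gamma}(x^{\gamma})=\gamma!$. Applying this to $g=\alpha_i\alpha_j(f)\in S_{d-2k}$ with $m=d-2k$ gives
\[
\alpha_i\alpha_j(f)(a_0,\dots,a_N)=\frac{1}{(d-2k)!}\,(\alpha_i\alpha_j L^{d-2k})(f)=\frac{1}{(d-2k)!}\,B_k(\alpha_i,\alpha_j).
\]
Hence $\Hess_f^{k}(a_0,\dots,a_N)$ equals $\tfrac{1}{(d-2k)!}$ times the Gram matrix of $B_k$, so $\hess_f^{k}(a_0,\dots,a_N)\neq 0$ precisely when $B_k$ is nondegenerate. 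Combining the three steps, all central maps are isomorphisms iff $\hess_f^{k}(a_0,\dots,a_N)\neq 0$ for every $0\le k\le\lfloor\frac{d}{2}\rfloor$, iff $L$ is a strong Lefschetz element, which is the claim. The delicate points to get right are the bookkeeping of the nonzero combinatorial constants, which never vanish in characteristic zero, and the compatibility of the chosen basis $\{\alpha_1,\dots,\alpha_{h_k}\}$ of $A_k$ in passing between the differential-operator description of $B_k$ and the polynomial entries of $\Hess_f^{k}$.
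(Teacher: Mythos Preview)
The paper does not give its own proof of this theorem; it simply cites \cite{MW} and marks the statement with a $\square$. Your argument is correct and is essentially the standard Maeno--Watanabe proof: reduce the Strong Lefschetz condition to bijectivity of the central maps $\cdot L^{d-2k}:A_k\to A_{d-k}$, rewrite each as nondegeneracy of the bilinear form $B_k(\alpha,\beta)=(\alpha\beta L^{d-2k})(f)$, and then identify the Gram matrix of $B_k$ with $(d-2k)!\cdot \Hess_f^{k}(a_0,\dots,a_N)$ via the apolarity identity $L^{m}(g)=m!\,g(a_0,\dots,a_N)$.
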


\begin{rmk}\label{ch}
By the theorem above, an Artinian Gorenstein algebra associated to a cubic form has the Strong Lefschetz Property if and only if the determinant of the classical Hessian does not vanish identically.
\end{rmk}

\subsection{The correspondence between cubics with vanishing hessian and AG algebras failing SLP}

Let $f \in \K[x_0,\ldots, x_N]_3$ be a reduced cubic form and let $X=V(f) \subset \P^{N}$ be the corresponding cubic hypersurface. Suppose that $X$ is not a cone, then the Artinian Gorenstein algebra $A=A_f$ has Hilbert vector $\Hilb(A) = (1,n, n,1)$ (recall that  $n=N+1$). Conversely, any standard graded Artinian Gorenstein $\K$-algebra having Hilbert function $\Hilb(A) = (1, n, n, 1)$, by Macaulay-Matlis duality, has a presentation $$A = \K[X_0, \ldots, X_N]/ \Ann_f,$$ for some $f \in \K[x_0. \ldots, x_N]_3$. 

From Therem \ref{wtn}, $A$ fails SLP if and only if $\hess_f=0.$ From \cite{DP} we can restrict ourselves to irreducible cubic forms $f$. Therefore, the bijection given by Macaulay-Matlis duality:
\[
\begin{array}{ccc}
\left \{\parbox{5cm}{
  \centering
  \text{Cubic forms } $f \in \mathbb{K}[x_0, \ldots, x_N]$ \\
  \text{not defining a cone}
} \right \}
& \leftrightarrow &
\left \{\parbox{6cm}{
  \centering
  \text{Artinian Gorenstein} $\mathbb{K}$\text{-algebras with Hilbert vector } $(1, n, n, 1)$ \\
} \right \}
\end{array} 
\]

restricts to

\[
\begin{array}{ccc}
\left \{\parbox{5cm}{
  \centering
  \text{Cubic forms } $f \in \mathbb{K}[x_0, \ldots, x_N]$ \\
  \text{not a cone with } $\hess_f =0$
} \right \}
& \leftrightarrow &
\left \{\parbox{6cm}{
  \centering
  \text{Artinian Gorenstein} $\mathbb{K}$\text{-algebras with Hilbert vector } $(1, n, n, 1)$ \text{failing SLP}
  
} \right \}
\end{array} \]

\subsection{Jordan types}

Here, we compute the possible Jordan types of the Artinian Gorenstein algebras in each family presented in   \ref{thetwofamilies}.  In 
$\P^6$, these Jordan types provide a purely algebraic criterion for determining whether an algebra belongs to one family or the other.

Let us recall some definitions and results about the Jordan type of Artinian algebras. Let $A$ be an Artinian $\K$-algebra.  Given $L \in A_1$,  consider the multiplication map $\cdot L: A \rightarrow A$. Since $A$ is Artinian, the map $\cdot L$ is nilpotent; consequently, it only has zero eigenvalues, the Jordan decomposition of this map induces a partition of $\textrm{dim}_{\K}(A)$ which we denote by  $\mathcal{J}_{A,L}$ and call the \textit{Jordan type} of $A$ with respect to $L$. Without loss of generality, we consider the partition in a non-increasing order. 

If $A_f = Q/\Ann_{f}$, in \cite{CG} is proved that the Jordan type of $A_f$ with respect to  $L$ depends only on the rank of the Hessian of $f$ in $L^{\perp}$.  Here we are using  the following notation: 
 for $L = a_{0}X_{0} + \cdots + a_{N}X_{N} \in A_{1}$,  $L^{\perp}=[a_0:\dots:a_N]$ is  the corresponding point in $\P^N$.


In fact, for $A_f$ of socle degree $3$, in \cite{CG}, the authors proved the following result.


\begin{prop}\label{Jordan}
Let $f \in S_{3}$ be a cubic form, $A_{f}$ its associated Artinian Gorenstein algebra and  $L \in A_{1}$. The Jordan type of $A_{f}$  with respect to $L$ is $$\mathcal{J}_{A_{f},L} = 4^{1} \oplus 2^{r-1} \oplus 1^{2(N+1-r)},$$
 where  $r=\rk(\Hess_{f}(L^{\perp})) $.
\flushright{$\square$}
\end{prop}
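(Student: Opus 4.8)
The plan is to reduce the computation of the Jordan type to the ranks of the powers $\cdot L^{j}\colon A_{f}\to A_{f}$, and to evaluate those ranks using the grading together with Poincar\'e duality. I recall the standard fact that for a nilpotent endomorphism $T$ of a finite-dimensional vector space, the number of Jordan blocks of size $\geq j$ equals $\rk(T^{j-1})-\rk(T^{j})$ (with $\rk(T^{0})=\dim$). Thus it suffices to compute $\rk(\cdot L)$, $\rk(\cdot L^{2})$ and $\rk(\cdot L^{3})$ on $A_{f}$, whose total dimension is $1+n+n+1=2N+4$.

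Since $\cdot L$ is homogeneous of degree $1$, each $\cdot L^{j}$ splits as the direct sum of its graded pieces $A_{i}\to A_{i+j}$, so $\rk(\cdot L^{j})$ is the sum of the ranks of these pieces. I would first record the two outer ranks, writing $\lambda_{i}\colon A_{i}\to A_{i+1}$ for multiplication by $L$. As $L\neq 0$, the map $\lambda_{0}\colon A_{0}\to A_{1}$ has rank $1$; and since multiplication by $L$ is self-adjoint for the Poincar\'e pairings $A_{0}\times A_{3}\to A_{3}$ and $A_{1}\times A_{2}\to A_{3}$ (both sides of $\langle Lu,v\rangle=\langle u,Lv\rangle$ equal $(uLv)(f)$), the map $\lambda_{2}\colon A_{2}\to A_{3}$ is the transpose of $\lambda_{0}$ and also has rank $1$.

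The key step is to identify the middle rank $\rk(\lambda_{1})$ with $r$, which is where I expect the real content to lie. Consider the symmetric bilinear form $B_{L}\colon A_{1}\times A_{1}\to A_{3}\cong\K$ given by $B_{L}(\alpha,\beta)=(\alpha\beta L)(f)$. In the basis $\{X_{0},\dots,X_{N}\}$ a direct computation gives $B_{L}(X_{i},X_{j})=\sum_{k}a_{k}\,\partial_{i}\partial_{j}\partial_{k}f=\bigl(\partial_{i}\partial_{j}f\bigr)(a)$, which is exactly the $(i,j)$ entry of $\Hess_{f}(L^{\perp})$; hence the Gram matrix of $B_{L}$ is $\Hess_{f}(L^{\perp})$ and $\rk(B_{L})=r$. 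On the other hand $B_{L}(\alpha,\beta)=\langle\alpha,\lambda_{1}(\beta)\rangle$ for the perfect pairing $\langle\cdot,\cdot\rangle\colon A_{1}\times A_{2}\to A_{3}$, so $B_{L}=P\circ\lambda_{1}$ with $P\colon A_{2}\xrightarrow{\sim}A_{1}^{*}$ the induced isomorphism; therefore $\rk(\lambda_{1})=\rk(B_{L})=r$. This yields $\rk(\cdot L)=\rk(\lambda_{0})+\rk(\lambda_{1})+\rk(\lambda_{2})=1+r+1=r+2$. (This is the socle-degree-$3$ instance of Theorem \ref{wtn}, extended from full rank to arbitrary rank.)

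It remains to compute the higher powers, and here I would use that $L$ is general, equivalently $f(L^{\perp})\neq 0$, i.e. $L^{\perp}\notin X$. Then $\cdot L^{3}\colon A_{0}\to A_{3}$, $1\mapsto L^{3}$, is nonzero since $L^{3}(f)=6f(a)\neq 0$, giving $\rk(\cdot L^{3})=1$. This is the single point that genuinely needs the hypothesis: if $L^{\perp}\in X$ one instead gets $L^{3}=0$ and a block of size $3$, so the stated normal form presupposes $f(L^{\perp})\neq 0$, which holds for general $L$. As $L^{3}\neq 0$ forces $L^{2}\neq 0$ in $A_{2}$, both graded pieces of $\cdot L^{2}$ are nonzero: $A_{0}\to A_{2}$ has rank $1$, while $A_{1}\to A_{3}$, $\alpha\mapsto\langle\alpha,L^{2}\rangle$, has rank $1$ by non-degeneracy of the pairing, so $\rk(\cdot L^{2})=2$. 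Feeding $\rk(\cdot L)=r+2$, $\rk(\cdot L^{2})=2$, $\rk(\cdot L^{3})=1$, $\rk(\cdot L^{4})=0$ into the block-counting formula produces, for the number of blocks of size $\geq j$ with $j=1,2,3,4,5$, the values $2N+2-r,\ r,\ 1,\ 1,\ 0$; subtracting consecutive terms gives one block of size $4$, none of size $3$, exactly $r-1$ of size $2$, and $2(N+1-r)$ of size $1$, which is precisely $\mathcal{J}_{A_{f},L}=4^{1}\oplus 2^{r-1}\oplus 1^{2(N+1-r)}$.
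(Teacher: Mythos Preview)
The paper does not give its own proof of this proposition; it is quoted from \cite{CG} and marked with a $\square$. Your argument is a correct, self-contained proof: the identification of $\rk(\lambda_1)$ with $\rk\bigl(\Hess_f(L^\perp)\bigr)$ via the Gram matrix of the bilinear form $(\alpha,\beta)\mapsto(\alpha\beta L)(f)$ is exactly the right mechanism, and the block counting from the ranks $r+2,\,2,\,1,\,0$ is clean and accurate.

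You have also put your finger on a genuine imprecision in the statement as printed. The formula $4^{1}\oplus 2^{r-1}\oplus 1^{2(N+1-r)}$ presupposes $L^{3}\neq 0$ in $A_f$, i.e.\ $f(L^\perp)\neq 0$; when $L^\perp\in X$ one gets instead a largest block of size $\leq 3$ and the partition changes. Your observation that $f(L^\perp)\neq 0$ also forces $r\geq 1$ (since $L^2=\lambda_1(L)\neq 0$) shows the formula is internally consistent under that hypothesis. This caveat does not affect the applications later in the paper, where the strata considered are taken generically and hence off $X$, but it is worth stating explicitly.
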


\subsubsection{Jordan types for the minimal family}\label{subsecJordanmin}
Let $X=V(f) \subset \P^{N}$ be a general cubic hypersurface in the minimal family. Then, by (\ref{min}), we know that $f$ has normal form $$f=x_0g_{0}+x_1g_{1}+x_2g_{2}+h,$$ where $g_{i}\in \K[x_{N-1},x_N]_{2}$ and $h\in \K[x_3,\ldots,x_N]_{3}$. In this case, we have $\codim(Z) = 1$, $\dim (Z^{*}) = 1$ and being $h$ general,  $\dim (X^{*}) = N-2$.


We analyze the possible Jordan types of $A_{f}$ with respect to $L\in A_{1}$. Using Proposition \ref{Jordan} above, it suffices to determine the rank of  $\Hess_{f}(L^{\perp})$.

\begin{lema}\label{Hessmin}
Notation as above, the Hessian matrix decomposes as $\Hess_{f}=M_0+M_1$, where $M_0$ has coefficients in $\K[x_3,\ldots,x_N]$ and $M_1=\left(\begin{array}{c|cc@{}}
0 & 0 &  \\
\hline
0 & M&
\end{array}\right)$ where $M$ is a $2\times 2$ matrix with coefficients  in $\K[x_0,x_1,x_2]$.   
\end{lema}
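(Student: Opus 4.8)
The plan is to prove the lemma by a direct computation of the second partial derivatives of $f$, organized according to the three natural groups of variables: the ``multiplier'' variables $x_0, x_1, x_2$; the variables $x_3, \ldots, x_{N-2}$, which appear only inside $h$; and the two variables $x_{N-1}, x_N$ in which the quadrics $g_i$ live. The key structural observation is that $f$ is the sum of the coupling piece $\sum_{i=0}^2 x_i g_i$, which links $\{x_0, x_1, x_2\}$ to $\{x_{N-1}, x_N\}$, and the pure cubic $h \in \K[x_3, \ldots, x_N]$, and that each of these contributes to $\Hess_f$ in a controlled way.

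First I would record the first-order derivatives: $\frac{\partial f}{\partial x_i} = g_i$ for $i \in \{0,1,2\}$, while for $j \geq 3$ one has $\frac{\partial f}{\partial x_j} = \sum_{i=0}^2 x_i \frac{\partial g_i}{\partial x_j} + \frac{\partial h}{\partial x_j}$, where the sum vanishes unless $j \in \{N-1, N\}$ because each $g_i$ lies in $\K[x_{N-1}, x_N]$. Differentiating once more, I would classify the entries $\frac{\partial^2 f}{\partial x_i \partial x_j}$ by the block they occupy. The $3 \times 3$ block with $i, j \in \{0,1,2\}$ vanishes; a mixed entry with $i \in \{0,1,2\}$ and $j \geq 3$ equals $\frac{\partial g_i}{\partial x_j}$, a linear form in $x_{N-1}, x_N$ that is zero unless $j \in \{N-1, N\}$; and an entry with $i, j \geq 3$ equals $\sum_{k=0}^2 x_k \frac{\partial^2 g_k}{\partial x_i \partial x_j} + \frac{\partial^2 h}{\partial x_i \partial x_j}$.

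The decisive point is that $\frac{\partial^2 g_k}{\partial x_i \partial x_j}$ is a nonzero constant only when both $i, j \in \{N-1, N\}$, and vanishes otherwise, since each $g_k$ is a quadric in $x_{N-1}, x_N$ alone. Hence the only entries of $\Hess_f$ involving $x_0, x_1, x_2$ are the four indexed by $i, j \in \{N-1, N\}$, and there the $x_0,x_1,x_2$-dependence is exactly the linear form $\sum_{k=0}^2 x_k \frac{\partial^2 g_k}{\partial x_i \partial x_j}$. I would therefore define $M_1$ to be the matrix whose bottom-right $2 \times 2$ block $M$ collects precisely these linear forms, so that $M$ has entries in $\K[x_0, x_1, x_2]$ and $M_1$ is zero elsewhere, and set $M_0 = \Hess_f - M_1$. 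By the case analysis above, every entry of $M_0$ is either $0$, a linear form in $x_{N-1}, x_N$, or a derivative $\frac{\partial^2 h}{\partial x_i \partial x_j}$, all of which lie in $\K[x_3, \ldots, x_N]$, which yields the claimed decomposition.

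There is no genuine obstacle here beyond careful bookkeeping: the entire content is that the $x_i g_i$ terms feed the Hessian only through the off-diagonal coupling entries (linear in $x_{N-1}, x_N$, hence in $\K[x_3, \ldots, x_N]$) and through the single $2 \times 2$ corner block (linear in $x_0, x_1, x_2$), whereas $h$ contributes only entries in $\K[x_3, \ldots, x_N]$ supported on indices $\geq 3$. The one place to be attentive is the $(N-1, N)$-corner, where a $\K[x_3,\ldots,x_N]$-contribution from $h$ and a $\K[x_0,x_1,x_2]$-contribution from the $g_i$ genuinely overlap in the same entry; the decomposition simply assigns each summand to the matrix where it belongs.
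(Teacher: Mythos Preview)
Your proposal is correct and follows essentially the same approach as the paper: a direct block-by-block computation of the second partial derivatives of $f$, using that the $g_i$ depend only on $x_{N-1},x_N$ and $h$ only on $x_3,\ldots,x_N$, to isolate the unique $2\times 2$ corner where the variables $x_0,x_1,x_2$ appear. The paper presents the same computation by explicitly writing the Hessian in block form (with blocks $G$, $G^T$, and $H$) rather than entry-by-entry, but the content and the key observation are identical.
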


\begin{proof}
    The Hessian matrix $\Hess_{f}$ is of the form

$$
\Hess_{f}= \left(\begin{array}{@{}ccc|ccc@{}}
    0 & 0 & 0 &  &  &  \\
    0 & 0 & 0 &    & G &  \\
    0 & 0 & 0 &  &  &  \\\hline
     &  &  &  &  &  \\
     & G^{T} &  &  & H &  \\
          &      &       &       &       & 
  \end{array}\right)
$$
where $G$ is the $3 \times (N-2)$ matrix given by 
$$
G=\left(\begin{array}{@{}ccc|cc@{}}
0  & \ldots  & 0 & \frac{\partial g_0}{\partial x_{N-1}} & \frac{\partial g_0}{\partial x_{N}}\\
0  & \ldots  & 0 &\frac{\partial g_1}{\partial x_{N-1}} & \frac{\partial g_1}{\partial x_{N}}\\
0 & \ldots & 0 & \frac{\partial g_2}{\partial x_{N-1}} & \frac{\partial g_2}{\partial x_{N}}\\
\end{array}\right),
$$
and  $H$ is a $(N-2)\times(N-2)$ matrix of the form 

$$
 H=\left(\begin{array}{@{}ccc|ccc@{}}
&  &  &  &  &  \\
& H_0 &  &  & H_1 &  \\
&  &  &  &  &  \\\hline
&  &  &  &  &  \\
& H_1^{T} &  &  & l_1 + h_{(N-1)(N-1)} & l_{2}+h_{(N-1)N} \\
&      &       &       &   l_{3}+h_{N(N-1)}     & l_{4}+h_{NN}
  \end{array}\right).
$$

Where $h_{ij}:=\frac{\partial h}{\partial x_i\partial x_j}$,
 $H_0$ is a square matrix of order $N-4$ with $(H_0)_{ij}=h_{ij}\in \K[x_3,\ldots,x_N]$ for  $i,j \in \{3, \ldots, N-2\}$ and $H_1$ is a $(N-4)\times 2$ matrix with $(H_1)_{ij}=h_{ij}\in \K[x_3,\ldots,x_N]$ for $i \in \{3, \ldots, N-2\}$, $j\in \{N-1,N\}$, and $l_t$ are linear forms in the variables $x_{0}, x_{1}, x_{2}$, for $t \in \{1,2,3,4\}$.

\end{proof}
Denote by $r(L^{\perp})$ the rank of the Hessian matrix in $L^{\perp}$. We have:
\begin{itemize}
    \item If $L^{\perp} \in X=V(f)$, by Lemma 7.2.8 in \cite{Ru}, we have $\dim X^{*} \leq r(L^{\perp})-2 \leq N-2$. Since $h$ is generic, $\dim X^{*} = N-2$. Therefore, $r(L^{\perp})=N$. This also shows that if $L^{\perp} \in \P^{N}$ is generic, then $r(L^{\perp})=N$;
    
    \item If $L^{\perp} \in \langle Z^{*}\rangle = V(x_{3}, \ldots, x_{N})$, then by Lemma \ref{Hessmin}, the rank of $\Hess_{f}(L^\perp)$ is the rank of the matrix 
$$M:=\left[\begin{array}{cc}
    l_{1} & l_{2} \\
    l_{3} & l_{4}
\end{array}\right] \text{ in } L^\perp.$$
Therefore, if $L^{\perp} \in \langle Z^{*} \rangle$ is generic,  $r(L^{\perp})=2$. 
\item Defining $\Delta$ as the zero locus of $\text{det}M$, if $L^{\perp} \in \Delta$, $r(L^{\perp})=1$. 
\end{itemize}

Therefore
\begin{displaymath}
r(L^\perp) = \left\{ \begin{array}{ll}
N, & \textrm{if $L^{\perp} \in \P^{N} \setminus \langle Z^{*}\rangle$}\\
2, & \textrm{if $L^{\perp} \in \mathcal{U}:= \langle Z^{*}\rangle\setminus \Delta$}\\
1, & \textrm{if $L^{\perp} \in \Delta$.}
\end{array} \right.
\end{displaymath}

\begin{prop}
    Let $A\in \Gor(1,n,n,1)$ be an algebra  in the minimal family. Then the possible Jordan types for $A$ are:
\begin{displaymath}
\mathcal{J}_{A_{f},L} = \left\{ \begin{array}{ll}
4^{1} \oplus 2^{N-1} \oplus 1^{2}, & \textrm{if $L^{\perp} \in \P^{N} \setminus \langle Z^{*}\rangle$}\\
4^{1} \oplus 2^{1} \oplus 1^{2(N-1)}, & \textrm{if $L^{\perp} \in \mathcal{U}:= \langle Z^{*}\rangle\setminus \Delta$}\\
 4^{1} \oplus 1^{2N}, & \textrm{if $L^{\perp} \in \Delta$.}
\end{array} \right.
\end{displaymath}

\end{prop}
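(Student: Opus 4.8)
The plan is to reduce everything to Proposition \ref{Jordan}, which expresses the Jordan type of $A_f$ with respect to $L$ purely in terms of the single integer $r = \rk(\Hess_f(L^\perp))$, namely $\mathcal{J}_{A_f,L} = 4^1 \oplus 2^{r-1} \oplus 1^{2(N+1-r)}$. Thus the whole statement becomes a matter of determining $r(L^\perp)$ on each locus and substituting. Since $\Delta$ is cut out inside $\langle Z^* \rangle = V(x_3, \ldots, x_N)$ by $\det M$, the three sets $\P^N \setminus \langle Z^* \rangle$, $\mathcal{U} = \langle Z^* \rangle \setminus \Delta$, and $\Delta$ partition $\P^N$, so the three cases are exhaustive and mutually exclusive.

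The substantive input is the rank computation, carried out using the block structure of Lemma \ref{Hessmin}. First I would treat the generic case: for $L^\perp \in \P^N \setminus \langle Z^* \rangle$ (in particular for $L^\perp \in X$), the bound $\dim X^* \le r(L^\perp) - 2$ of \cite[Lemma 7.2.8]{Ru}, together with the genericity of $h$, which forces $\dim X^* = N - 2$, yields $r(L^\perp) \ge N$; and since $\hess_f \equiv 0$ the Hessian is nowhere of full rank $N+1$, so in fact $r(L^\perp) = N$. Substituting $r=N$ gives the first line. Next, on $\langle Z^* \rangle = V(x_3, \ldots, x_N)$ every entry of $M_0$, as well as the blocks $G$ and $H_0, H_1$ (whose nonzero entries are linear in $x_3, \ldots, x_N$, or linear in $x_{N-1},x_N$ for the $G$-block), vanishes identically, so $\Hess_f(L^\perp)$ collapses to the $2 \times 2$ block $M = \bigl(\begin{smallmatrix} l_1 & l_2 \\ l_3 & l_4 \end{smallmatrix}\bigr)$ with $l_t$ linear in $x_0, x_1, x_2$; its rank equals $2$ on the open set $\mathcal{U}$ and drops to $1$ on $\Delta = V(\det M)$. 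Substituting $r=2$ and $r=1$ gives the second and third lines.

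I expect the only delicate point to be the generic-locus claim $r = N$, which is where the hypothesis that $h$ is general genuinely enters: it is needed to guarantee $\dim X^* = N - 2$ and hence that the Hessian rank attains its maximal possible value off $\langle Z^* \rangle$. The remaining two strata are elementary, since restricting to $\langle Z^* \rangle$ annihilates all the variable entries and leaves only $M$; here one should only note that $r=1$ (and not $0$) on $\Delta$ holds away from the base locus $\{l_1=l_2=l_3=l_4=0\}$, which is of lower dimension and excluded from the generic point of $\Delta$. Assembling the three ranks through the formula of Proposition \ref{Jordan} then completes the argument.
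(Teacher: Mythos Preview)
Your proposal is correct and follows essentially the same route as the paper: the rank computation using the block structure of Lemma~\ref{Hessmin}, the appeal to \cite[Lemma 7.2.8]{Ru} together with the genericity of $h$ to get $r=N$ on the open stratum, and the reduction to the $2\times 2$ block $M$ on $\langle Z^*\rangle$ are exactly what the paper does in the paragraphs preceding the proposition; the proof itself then just invokes Proposition~\ref{Jordan}. Your added remarks (that $\hess_f\equiv 0$ caps the rank at $N$, and that $r=1$ rather than $0$ on the generic point of $\Delta$) make explicit points the paper leaves implicit.
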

\begin{proof}
Use Proposition \ref{Jordan} with the possible values to $r(L^\perp)$ obtained.
    
\end{proof}

\subsubsection{Jordan types for the maximal family}

Let $N=2k$ and $X = V(f) \subset \P^{N}$ be a cubic hypersurface in the maximal family. Then codim$(Z) = 1$, dim$Z^{*}=k-1$ and  $\langle Z^{*} \rangle = \P^{k} = V(x_{k+1}, \ldots, x_{2k})$.
By (\ref{max}), we know that $f$ is projectively equivalent to  $$\sum_{i=0}^{k}x_{i}g_{i} + h$$ where $h, g_{i} \in \K[x_{k+1}, \ldots, x_{2k}]$, $\textrm{deg}(h)=3$ and $\textrm{deg}(g_{i})=2$.

\begin{lema}\label{Hessmax}
For $f$ in the maximal family, the Hessian matrix $\Hess_{f}=M_0+M_1$, where $M_0$ has coefficients in $\K[x_{k+1},\ldots,x_N]$ and $M_1=\left(\begin{array}{c|cc@{}}
0 & 0 &  \\
\hline
0 & M&
\end{array}\right)$ where $M$ is a $k\times k$ matrix with coefficients  in $\K[x_0,\dots ,x_k]$.   
\end{lema}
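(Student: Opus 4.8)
The plan is to compute the Hessian matrix block by block, mirroring the argument in Lemma \ref{Hessmin}, by splitting the variables $x_0, \ldots, x_N$ (with $N = 2k$) into the two groups $\{x_0, \ldots, x_k\}$ and $\{x_{k+1}, \ldots, x_N\}$ dictated by the normal form $f = \sum_{i=0}^k x_i g_i + h$, where $g_i, h \in \K[x_{k+1}, \ldots, x_N]$.

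First I would record the three kinds of second partial derivatives. Since each $g_i$ and $h$ involve only the variables $x_{k+1}, \ldots, x_N$, the form $f$ is linear in each of $x_0, \ldots, x_k$; hence $\partial^2 f/\partial x_i \partial x_j = 0$ for all $i, j \in \{0, \ldots, k\}$, giving a zero $(k+1)\times(k+1)$ top-left block. For a mixed pair $i \in \{0, \ldots, k\}$, $j \in \{k+1, \ldots, N\}$ one gets $\partial^2 f / \partial x_i \partial x_j = \partial g_i / \partial x_j$, a linear form in $x_{k+1}, \ldots, x_N$; these fill the off-diagonal blocks $G$ and $G^{T}$. The only block mixing the two coefficient rings is the bottom-right $k\times k$ block, with indices $i,j \in \{k+1, \ldots, N\}$, where $\partial^2 f/\partial x_i \partial x_j = \sum_{a=0}^k x_a\, \partial^2 g_a/\partial x_i\partial x_j + \partial^2 h / \partial x_i \partial x_j$. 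Because each $g_a$ is quadratic, its second derivatives are constants, so the first summand is a linear form $l_{ij}$ in $x_0, \ldots, x_k$, while the second summand $h_{ij}$ is a linear form in $x_{k+1}, \ldots, x_N$.

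Collecting terms, I would set $M_0$ to be the matrix gathering all entries lying in $\K[x_{k+1}, \ldots, x_N]$, namely the zero top-left block, the blocks $G$ and $G^{T}$, and the $[h_{ij}]$ part of the bottom-right block; and $M_1$ the complement, which is supported only on the bottom-right block and equals $M = [l_{ij}]$, a $k\times k$ matrix of linear forms in $x_0, \ldots, x_k$. This yields the asserted decomposition $\Hess_{f} = M_0 + M_1$ with the stated block shape of $M_1$, of size $(k+1) + k = N+1$.

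I do not expect a genuine obstacle here: the statement is a structural bookkeeping result identical in spirit to Lemma \ref{Hessmin}, and its whole content is the clean separation of the two coefficient rings. The only points requiring care are to verify that the $g_a$-contribution to the bottom-right block is exactly linear in $x_0, \ldots, x_k$ (using $\deg g_a = 2$, which makes $\partial^2 g_a/\partial x_i\partial x_j$ constant) and that no term involving $x_0, \ldots, x_k$ leaks into the off-diagonal or top-left blocks, so that the splitting $\Hess_{f} = M_0 + M_1$ is well defined with the claimed coefficient rings.
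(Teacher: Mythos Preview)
Your proposal is correct and follows essentially the same approach as the paper: a direct block-by-block computation of $\Hess_f$ using the normal form, yielding the zero top-left $(k{+}1)\times(k{+}1)$ block, the off-diagonal blocks $G, G^{T}$ with entries $\partial g_i/\partial x_j$, and the bottom-right $k\times k$ block split as $M + \Hess_h$ with $(M)_{ij} = \sum_{l} x_l\,\partial^2 g_l/\partial x_i\partial x_j$. The paper's proof is exactly this calculation, with the same observation that $\deg g_l = 2$ makes the entries of $M$ linear in $x_0,\ldots,x_k$.
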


 

\begin{proof}
    
The Hessian matrix Hess$_{f}$ is given by 

$$
\Hess_{f}= \left(\begin{array}{@{}ccc|ccc@{}}
    0 & \ldots & 0 &  &  &  \\
    \vdots & \ddots & \vdots &  & G &  \\
    0 & \ldots & 0 &  &  &  \\\hline
     &  &  &  &  &  \\
     & G^{T} &  &  & M+H &  \\
          &      &       &       &       & 
  \end{array}\right)
$$

Where $G$ is a $(k+1)\times k$ matrix, such that $(G)_{ij}=\frac{\partial g_i}{\partial x_{j+k}}\in \K[x_{k+1}, \ldots, x_{2k}]$;   
$H=\Hess_{h}$, i.e. $(H)_{ij}\in \K[x_{k+1}, \ldots, x_{2k}]$ and 
     $M$ is a $k\times k$ matrix, such that for $1\leq i,j\leq k$, 
$(M)_{ij}=\sum_{l=0}^{k}x_l \frac{\partial^2 g_l}{\partial x_{i+k}\partial x_{j+k}}\in \K[x_{0}, \ldots, x_{k}]$.

\end{proof}




Notation as before, we have: $r(L^{\perp}) \leq N$. Moreover:
\begin{itemize}
    \item If $L^{\perp} \in X$ is a general point, by Lemma $7.2.8$ in \cite{Ru}, we have $r(L^{\perp}) = N$, then  for generic $L^{\perp}\in \P^N$, $r(L^{\perp})=N$. 
\item If $L^{\perp} \in  \langle Z^{*} \rangle = \P^{k} =V(x_{k+1}, \ldots, x_{2k})$, using Lemma \ref{Hessmax}, we have 
Hess$_{f}(L^{\perp})= \left(\begin{array}{@{}c|c@{}}
    0 & 0  \\
    \hline 
    0 & M_{|\P^k} 
  \end{array}\right)$.
So in this case $r(L^{\perp})\leq k$.

\item  If $\Delta$ denotes  the zero locus of $det(M_{|\P^k})$, we have that for $L^{\perp}\in \Delta$,  $r(L^{\perp})<k$.

\end{itemize}

Therefore
\begin{displaymath}
r(L^{\perp}) = \left\{ \begin{array}{ll}
N, & \textrm{if $L^{\perp} \in \P^{2k}\setminus \langle Z^{*} \rangle$}\\
k, & \textrm{if $L^{\perp} \in \langle Z^{*} \rangle\setminus \Delta$}\\
\leq k-1, & \textrm{if $L^{\perp} \in \Delta$.}
\end{array} \right.
\end{displaymath}

\begin{prop}
    Let $A\in \Gor(1,n,n,1)$ be an algebra  in the maximal family. Then the the possible Jordan types for $A$ are:

\begin{displaymath}
\mathcal{J}_{A_{f},L} = \left\{ \begin{array}{ll}
4^{1} \oplus 2^{N-1} \oplus 1^{2}, & \textrm{if $L^{\perp} \in \P^{N} \setminus \langle Z^{*}\rangle$}\\
4^{1} \oplus 2^{k-1} \oplus 1^{2(k+1)}, & \textrm{if $L^{\perp} \in \langle Z^{*} \rangle\setminus \Delta$}\\
 4^{1} \oplus 2^{n-1} \oplus 1^{2(N+1-n)}, \text{ for some } n<k & \textrm{if $L^{\perp} \in \Delta$.}
\end{array} \right.
\end{displaymath}
\end{prop}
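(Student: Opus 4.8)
The plan is to reduce everything to Proposition \ref{Jordan}, which expresses the Jordan type of $A_f$ purely in terms of the single integer $r(L^\perp) = \rk(\Hess_f(L^\perp))$, namely $\mathcal{J}_{A_f,L} = 4^1 \oplus 2^{r-1} \oplus 1^{2(N+1-r)}$. Consequently the statement is really a corollary of the stratification of $\P^N$ by the rank of the Hessian, and the only substantive work is to pin down $r(L^\perp)$ on each stratum --- which is exactly the case analysis recorded just above the statement. So the proof amounts to substituting the three computed values $r = N$, $r = k$, and $r < k$ into the formula of Proposition \ref{Jordan}.

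First I would exploit the block decomposition $\Hess_f = M_0 + M_1$ of Lemma \ref{Hessmax}, where $M_0$ has entries in $\K[x_{k+1},\ldots,x_N]$ and $M_1$ carries only the $k \times k$ block $M$ with entries in $\K[x_0,\ldots,x_k]$. For a general $L^\perp \in \P^N$ the hypothesis $\hess_f = 0$ already forces $r(L^\perp) \leq N$; to obtain equality I would invoke Lemma 7.2.8 of \cite{Ru}, whose bound $\dim X^* \leq r(L^\perp) - 2$, combined with the generic value $\dim X^* = N-2$, gives $r = N$ at a general point of $X$ and hence on all of $\P^N$. Next, restricting to $\langle Z^* \rangle = V(x_{k+1},\ldots,x_N)$ kills every entry of $M_0$ (each is a partial derivative of some $g_i$ or of $h$, so it vanishes when $x_{k+1} = \cdots = x_N = 0$), leaving $\Hess_f(L^\perp)$ block-diagonal with blocks $0$ and $M_{|\P^k}$. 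Its rank is therefore at most $k$, equals $k$ off the determinantal locus $\Delta = V(\det M_{|\P^k})$, and drops to some $n < k$ on $\Delta$.

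With these three rank values in hand the statement follows by direct substitution, using $N = 2k$ so that $N+1-k = k+1$ produces the exponent $1^{2(k+1)}$ in the middle stratum. The only genuine obstacle is to certify that the generic ranks are actually attained: specifically, that $M_{|\P^k}$ is generically nonsingular, equivalently that the symmetric matrix $M$ is nondegenerate for a general member of the family, which is where one must use the linear independence of the $g_i$. For the last stratum I would deliberately not try to compute the corank exactly --- on $\Delta$ it varies with the chosen point, and this is precisely why the conclusion only asserts the Jordan type $4^1 \oplus 2^{n-1} \oplus 1^{2(N+1-n)}$ for some $n < k$.
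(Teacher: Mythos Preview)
Your proposal is correct and follows essentially the same approach as the paper: the rank stratification $r(L^\perp) \in \{N,\, k,\, <k\}$ via Lemma~\ref{Hessmax} and Lemma~7.2.8 of \cite{Ru} is exactly what the paper establishes in the bulleted discussion preceding the proposition, and the paper's proof is then the single sentence ``Use Proposition~\ref{Jordan} with the possible values to $r(L^\perp)$ obtained.'' Your additional remark about needing the linear independence of the $g_i$ to ensure $\det M_{|\P^k} \not\equiv 0$ (so that $\Delta \subsetneq \P^k$) is a valid point the paper leaves implicit.
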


\begin{proof}
Use Proposition \ref{Jordan} with the possible values to $r(L^\perp)$ obtained.
    \end{proof}

Therefore in $\mathbb{P}^{N}$, $N=2k \ \textrm{and} \ k>2$, we can identify the minimal and maximal families in the following way:

\begin{prop}
Let $N=2k$ whit $k>2$, and $A\in \Gor(1,n,n,1)$ failing SLP. If there exists $L \in A_{1}$ whose Jordan type is $\mathcal{J}_{A,L} = 4^{1} \oplus 2^{k-1} \oplus 1^{2(k+1)}$, then $A$ doesn't belong to the minimal family.
In particular, for  $A\in \Gor(1,7,7,1)$, $A$ is in the maximal family if and only if there exists $L \in A_{1}$ whose Jordan type is $\mathcal{J}_{A_{f},L} = 4^{1} \oplus 2^{2} \oplus 1^8$.
\flushright{$\square$}
\end{prop}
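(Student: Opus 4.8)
The plan is to translate the hypothesis about Jordan types into a statement about the single integer $r(L^{\perp}) = \rk(\Hess_{f}(L^{\perp}))$. By Proposition \ref{Jordan} the Jordan type $\mathcal{J}_{A,L}$ is completely determined by this rank through $\mathcal{J}_{A,L} = 4^{1} \oplus 2^{r-1} \oplus 1^{2(N+1-r)}$; in particular the prescribed type $4^{1} \oplus 2^{k-1} \oplus 1^{2(k+1)}$ occurs for a given $L$ if and only if $r(L^{\perp}) = k$, since the exponent of the $2$-block equals $r - 1$ and, for $N = 2k$, the exponent of the $1$-block is forced to be $2(N+1-k) = 2(k+1)$.

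For the first assertion I would compare this with the Jordan types actually realized in the minimal family. By the rank computation of Section \ref{subsecJordanmin}, if $A$ belonged to the minimal family then, as $L^{\perp}$ ranges over $\P^{N}$, the rank $r(L^{\perp})$ would take only the three values $N = 2k$, $2$ and $1$, producing respectively the types $4^{1}\oplus 2^{N-1}\oplus 1^{2}$, $4^{1}\oplus 2^{1}\oplus 1^{2(N-1)}$ and $4^{1}\oplus 1^{2N}$. Matching the $2$-block of each of these against $4^{1}\oplus 2^{k-1}\oplus 1^{2(k+1)}$ would force $k = 2k$, $k = 2$ or $k = 1$ respectively, and all three are excluded by the hypothesis $k > 2$. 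Hence no $L$ of the prescribed Jordan type can exist when $A$ is minimal, which is the desired contradiction.

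For the final ``in particular'' statement I specialize to $n = 7$, i.e. $N = 6$ and $k = 3$, so that $4^{1}\oplus 2^{k-1}\oplus 1^{2(k+1)} = 4^{1}\oplus 2^{2}\oplus 1^{8}$. The implication from the Jordan type to maximality is then immediate: the existence of such an $L$ rules out the minimal family by the first part, while the standing hypotheses ($A \in \Gor(1,7,7,1)$ failing SLP) guarantee, by Remark \ref{ch} together with the fact that $A$ has codimension $7$, that the dual cubic $f$ is a non-cone in $\P^{6}$ with $\hess_{f} = 0$; by the classification recalled in Remark \ref{Ns} for $N = 6$ the only options are the minimal and maximal families, so $A$ is maximal. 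For the converse I would read off the rank table for the maximal family: for $L^{\perp} \in \langle Z^{*}\rangle \setminus \Delta$ the block form of $\Hess_{f}(L^{\perp})$ gives $r(L^{\perp}) = k = 3$, hence the Jordan type $4^{1}\oplus 2^{2}\oplus 1^{8}$.

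The one step demanding real attention --- the main obstacle --- is ensuring that the locus $\langle Z^{*}\rangle \setminus \Delta$ is nonempty, equivalently that $\det(M_{|\P^{k}})$ does not vanish identically on $\langle Z^{*}\rangle = \P^{k}$; otherwise the rank $k$ would never be attained and the converse would break down. Here I would invoke the special Perazzo structure of maximal cubics: by \cite[Theorem 4.8]{GRu}, $Z^{*}$ is an irreducible component of the determinantal hypersurface $\Delta \subset \P^{k}$ and has dimension $k-1$, so $\Delta$ is a proper hypersurface of $\P^{k}$ and its complement is a nonempty open set. Granting this, the remaining work is purely the bookkeeping of reading Jordan types off ranks via Proposition \ref{Jordan}, which is routine.
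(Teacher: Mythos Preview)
Your proposal is correct and follows exactly the paper's intended approach: the paper leaves the proof as an immediate consequence (marked only by $\square$) of the two preceding propositions listing the possible Jordan types for each family, and this is precisely what you unpack via the rank dictionary of Proposition~\ref{Jordan}. Your explicit verification that $\langle Z^{*}\rangle \setminus \Delta$ is nonempty, using \cite[Theorem 4.8]{GRu}, is a detail the paper leaves implicit but is entirely consistent with its earlier discussion of special Perazzo hypersurfaces.
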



\section{Parameter spaces}\label{section5}
Theorem \ref{wtn} states that for  $f\in S_3$,   $A_{f}$  fails to have  SLP if and only if hess$_f = 0$. Let denote by $\mathcal{H}\subset \P(S_3)$ the locus of cubics with vanishing hessian, and by $\mathcal{C}\subset \P(S_3)$ the locus of cubic cones.
In the previous sections, we have shown two families of cubics in  
$\overline{\mathcal{H}\setminus \mathcal{C}}$ that in low dimension ($n=N+1\leq 7$) exhaust the algebras on $\Gor(1,n,n,1)$ failing SLP.

In this section, we construct parameter spaces for the closure of these two families.  Let denote by $\mathbb{X}$ the closure in $\P(S_3)$ of any of the families; the parameter space that we are going to construct describes $\mathbb{X}$ as the birational image of the projectivization of a vector bundle. With this description, we can compute the dimension and degree of $\mathbb{X}$ using techniques of Intersection Theory. 

To compute the degree of $\mathbb{X}$ our principal tools are the Segre and Chern classes of a vector bundle. We refer the reader to \cite[Chapter 3]{FUL}   or \cite[\S10.1 and Ch 5]{EH}  for a systematic treatment of Segre and Chern classes.

 In the following proposition, we give an enumerative interpretation for the degree of a variety $\mathbb{X} \subset \Gor(1,n,n,1)$. 
Recall that any algebra in $\Gor(1,n,n,1)$ is a quotient of  $Q=\C[X_0,\dots,X_N]$.   
 \begin{prop}
 Let $\mathbb{X}\subset \Gor(1,n,n,1)$ be  a  subvariety  of dimension $m$ and degree $d$. Given $L_1,\dots, L_m$ generic linear forms in $Q_1$, the degree $d$ is the number of  algebras in $\mathbb{X}$ that have $L_1,\dots, L_m$ as nilpotents of index $3$.

 \end{prop}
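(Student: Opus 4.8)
The plan is to identify $\Gor(1,n,n,1)$ with the open subset $U\subset\P(S_3)$ of classes of cubics not defining a cone, via Macaulay--Matlis duality $A_f\mapsto[f]$, so that the degree $d$ and dimension $m$ of $\mathbb{X}$ are those of its closure $\overline{\mathbb{X}}\subset\P(S_3)=:\P^M$, with $M=\binom{N+3}{3}-1$. The key first step is to translate the nilpotence condition into a linear condition on $f$. Writing $L=a_0X_0+\dots+a_NX_N$ and $p=L^\perp=[a_0:\dots:a_N]$, the homogeneity of $f$ gives $L^3(f)=3!\,f(a)$; hence, since $A_f$ has socle degree $3$, the class of $L^3$ in $A_f$ vanishes if and only if $L^3\in\Ann_f$, i.e. $f(p)=0$, i.e. $p\in X=V(f)$. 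The requirement that the index be exactly $3$, namely $L^2\neq 0$, amounts to $L^2(f)\neq 0$, an open condition that holds for generic $L$ at each of the finitely many solutions and so does not affect the count.

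First I would record that, for fixed $p$, the locus $H_p=\{f\in\P(S_3):f(p)=0\}$ is a hyperplane, and that $p\mapsto H_p$ is the $3$-uple Veronese embedding $v_3\colon\P^N\hookrightarrow\check{\P}^M$. Two properties of $v_3$ drive the argument: its image is nondegenerate, i.e. it spans $\check{\P}^M$; and the family $\{H_p\}_{p\in\P^N}$ is base-point-free on $\P(S_3)$, since the only cubic vanishing at every point of $\P^N$ is $0$. Consequently the $m$ conditions "$L_1,\dots,L_m$ are nilpotents of index $3$ in $A_f$" cut out exactly $\overline{\mathbb{X}}\cap H_{p_1}\cap\dots\cap H_{p_m}$, where $p_i=L_i^\perp$.

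Next I would introduce the incidence variety $\Gamma=\{(f,p_1,\dots,p_m)\in\overline{\mathbb{X}}\times(\P^N)^m:f(p_i)=0,\ i=1,\dots,m\}$ together with its projection $\pi_2\colon\Gamma\to(\P^N)^m$. The fiber of the first projection over a non-cone $f$ is $X^m$, of dimension $m(N-1)$, so $\dim\Gamma=m+m(N-1)=mN=\dim(\P^N)^m$; hence $\pi_2$ is dominant and generically finite, and its general fiber over $(p_1,\dots,p_m)$ is precisely $\overline{\mathbb{X}}\cap H_{p_1}\cap\dots\cap H_{p_m}$. It then remains to identify the cardinality of this general fiber with $d$.

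The hard part will be showing that a general member of the Veronese subfamily of hyperplanes behaves like a genuinely general hyperplane. I would argue by induction on $m$ that, for general $p_1,\dots,p_m$, the intersection is transverse and consists of $d$ reduced points lying in the smooth locus, hence inside $U$ and away from both the boundary $\overline{\mathbb{X}}\setminus\mathbb{X}$ and the cone locus. The degree is preserved at each step because each $H_p$ is linear and, by nondegeneracy of $v_3(\P^N)$, a general $H_p$ does not contain the current section: the hyperplanes containing it form a proper linear subspace of $\check{\P}^M$, which cannot contain the nondegenerate Veronese. Base-point-freeness together with generic smoothness in characteristic zero then yields transversality and reducedness. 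The genuine obstacle is exactly this transfer of Bertini-type genericity from the full linear system $|\mathcal{O}_{\P^M}(1)|$ to its special Veronese subfamily, where nondegeneracy and base-point-freeness are precisely what make the restriction harmless; granting this, the general fiber of $\pi_2$ has $d$ points, which is the asserted enumerative description.
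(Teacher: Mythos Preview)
Your proof is correct and shares the paper's key step: the generalized Euler identity $L^3(f)=3!\,f(L^\perp)$, which converts ``$L$ is nilpotent of index $\leq 3$ in $A_f$'' into the linear condition $f\in H_{L^\perp}$. The paper's own argument stops essentially there: it identifies the hyperplanes $H_P=\{f:f(P)=0\}$ and concludes directly that the degree counts points in $\mathbb{X}\cap H_{P_1}\cap\dots\cap H_{P_m}$ for generic $P_i$. You go further, observing that the $H_p$ form only the image of the $3$-uple Veronese $v_3:\P^N\hookrightarrow\check\P(S_3)$, a proper subfamily of the full dual space, and you supply the justification the paper omits: nondegeneracy of $v_3(\P^N)$ guarantees that a general $H_p$ does not contain any component of the running section, and base-point-freeness together with generic smoothness in characteristic zero gives transversality and reducedness at each inductive step. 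Your treatment of the ``index exactly $3$'' side condition ($L^2(f)\neq 0$) is likewise more explicit than the paper's. In short, the two arguments have the same skeleton, but yours closes a gap the paper leaves implicit.
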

 
 \begin{proof}
 As $\dim \mathbb{X}=m$,  the degree of $\mathbb{X}$ is the number of points in the intersection of $\mathbb{X}$  with a generic codimension $m$ linear space of $\Gor(1,n,n,1)$, and such a linear subspace is the intersection of $m$ generic hyperplanes.
 
 Using the correspondence of $\Gor(1,n,n,1)$ with $\P(S_3)$ of Theorem \ref{doubleann}, we can describe the hyperplanes in $\Gor(1,n,n,1)$. Recall that a hyperplane on $\P(S_3)$ corresponds to a point $P=[a_0:\dots:a_N]\in \P^N$, as follows $$H_P=\{f\in \P(S_3)\,\mid\, f(P)=0\}.$$. On the other hand, recalling the generalized Euler formula:
 $$\text{if } L=a_0X_0+\dots+a_NX_N\in Q_1\, \text{ then }  L^3(f)=3!f(P), $$
we have  that   $f(P)=0$ is equivalent to $L^3\in \Ann_f$.   We conclude that a hyperplane in $\Gor(1,n,n,1)$ is of the form $$H_L=\{A_f\,\mid\, \bar{L}\in A_f \text{ is a nilpotent of index } 3\}.$$ 
 
 So the degree of   $\mathbb{X}\subset \Gor(1,n,n,1)$ has the following interpretation:

 Given $L_1,\dots, L_m$ generic linear forms in $Q_1$, there exist $\deg(\mathbb{X})$ algebras in $\mathbb{X}$ that have $L_1,\dots, L_m$ as nilpotents of index $3$.
    
 \end{proof}


\subsection{Parameter space for the Minimal family} 
In this section, we denote the closure of the minimal family by $\mathbb{X}_{min}$.
A generic element of $\mathbb{X}_{min}$ is projectively equivalent to 
\begin{equation}\label{minp}
    f=x_0g_0+x_1g_1+x_2g_2+h,
\end{equation}
with $g_i\in \C[x_{N-1},x_N]$ and $h\in \C[x_3,\ldots,x_N]$.
So, to parametrize this kind of cubics,  we must choose a $2$-plane $V(I)$ in $\P^N$  and a $(N-2)$-plane $V(J)$  containing it (for example $I=\langle x_3,\ldots,x_N \rangle$ and $J=\langle x_{N-1},x_N \rangle$). Afterward, we have to construct three quadrics in the variables in $J$ (i.e. three elements of $\textrm{Sym}_{2}(J)$) and a cubic in the variables of $I$ (i.e. an element of $\textrm{Sym}_{3}(I)$).

    
    We describe the parameter space in the following theorem.

    \begin{thm}\label{thmMin}
The closure $\mathbb{X}_{min}$ of the minimal family is a rational subvariety of  $\P(S_3)$ of  dimension $5(N-2)+\binom{N}{3}+4$.
    The degree of $\mathbb{X}_{min}$ is given by the top Segre class $s_{m}(\mathcal{E})$ of a vector bundle $\mathcal{E}$ over the flag variety $\mathbb{F}(2,N-2,N+1)$, and can be computed using the Script in \ref{scriptminN}.
\end{thm}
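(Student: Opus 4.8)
The plan is to realize $\mathbb{X}_{min}$ as the image of a projective bundle over the flag variety $\mathbb{F}=\mathbb{F}(2,N-2,N+1)$ parametrizing pairs of linear subspaces $J\subset I\subset \mathbb{C}^{N+1}$ with $\dim J=2$ and $\dim I=N-2$; here $I$ records the linear forms vanishing on the plane $V(I)=\P^2=\langle Z^*\rangle$, and $J$ records the plane $V(J)=\P^{N-2}$ on which the quadrics $g_i$ are supported. Let $\mathcal{J}\subset \mathcal{I}\subset \mathcal{O}_{\mathbb{F}}^{\oplus(N+1)}$ be the tautological flag and $\mathcal{Q}=\mathcal{O}_{\mathbb{F}}^{\oplus(N+1)}/\mathcal{I}$ the rank-$3$ quotient. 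Over a point $(J\subset I)$ the cubics in minimal normal form $f=x_0g_0+x_1g_1+x_2g_2+h$ with $g_i\in \operatorname{Sym}_2 J$ and $h\in \operatorname{Sym}_3 I$ are exactly the elements of the subspace $\mathbb{C}^{N+1}\cdot\operatorname{Sym}_2 J+\operatorname{Sym}_3 I\subset S_3$. I would first check, by a short monomial count using $J\subset I$, that these subspaces have constant dimension $9+\binom{N}{3}$, so they form the fibres of a subbundle $\mathcal{E}\subset S_3\otimes\mathcal{O}_{\mathbb{F}}$ sitting in an exact sequence
\[ 0\longrightarrow \operatorname{Sym}_3\mathcal{I}\longrightarrow \mathcal{E}\longrightarrow \mathcal{Q}\otimes\operatorname{Sym}_2\mathcal{J}\longrightarrow 0, \]
the quotient map being multiplication $\bar x\otimes q\mapsto \overline{x\,q}$, which is well defined and fibrewise injective modulo $\operatorname{Sym}_3 I$. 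The inclusion $\mathcal{E}\hookrightarrow S_3\otimes\mathcal{O}_{\mathbb{F}}$ then induces a morphism $\Psi\colon \P(\mathcal{E})\to \P(S_3)$, $\big((J\subset I),[f]\big)\mapsto [f]$.

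Next I would settle dimension and rationality. One computes $\dim\mathbb{F}=3(N-2)+2(N-4)=5N-14$ and $\operatorname{rk}\mathcal{E}=9+\binom{N}{3}$, so $\dim\P(\mathcal{E})=(5N-14)+\big(8+\binom{N}{3}\big)=5(N-2)+\binom{N}{3}+4$. Since $\mathbb{F}$ is rational and $\P(\mathcal{E})$ is a projective bundle over it, $\P(\mathcal{E})$ is rational; moreover $\Psi$ is proper with irreducible source, so its image is closed and irreducible, and a general point of $\P(\mathcal{E})$ maps to a general cubic of the minimal family. Hence $\operatorname{im}\Psi=\mathbb{X}_{min}$, and once $\Psi$ is shown to be generically injective this forces $\dim\mathbb{X}_{min}=\dim\P(\mathcal{E})$ and the rationality of $\mathbb{X}_{min}$.

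The main obstacle is the generic injectivity of $\Psi$, i.e. recovering the flag $(J\subset I)$ from a general $f\in\mathbb{X}_{min}$. For the plane $V(I)$ I would invoke the geometry of Section 2: for a general minimal cubic one has $\dim Z^*=1$ and $\langle Z^*\rangle=\P^2=V(I)\subset\Sing(X)$, so $V(I)$, and hence the ideal $I$, is intrinsic to $f$. To recover $J$, observe that for a derivation $D\in I^{\perp}$ one has $D(f)=\sum_i D(x_i)\,g_i$, so the net of quadrics $\mathcal{N}=\{D(f)\mid D\in I^{\perp}\}$ equals $\langle g_0,g_1,g_2\rangle$, which for general $f$ is all of $\operatorname{Sym}_2 J$; the common singular locus (vertex) of the quadrics in $\mathcal{N}$ is then exactly $V(J)=\P^{N-2}$, recovering $J$. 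Thus the flag, and therefore the preimage $\big((J\subset I),[f]\big)$, is unique, so $\Psi$ is birational onto $\mathbb{X}_{min}$. I expect the delicate points to be the genericity claims (that the three quadrics span $\operatorname{Sym}_2 J$ and that their net has vertex exactly $\P^{N-2}$) together with the appeal to the classification of the minimal family.

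Finally, the degree. With $\Psi$ birational and $\xi=c_1\big(\mathcal{O}_{\P(\mathcal{E})}(1)\big)=\Psi^*H$, where $H$ is the hyperplane class of $\P(S_3)$, we obtain
\[ \deg\mathbb{X}_{min}=\int_{\P(\mathcal{E})}\xi^{\dim\P(\mathcal{E})}=\int_{\mathbb{F}}\pi_*\big(\xi^{\,\operatorname{rk}\mathcal{E}-1+m}\big)=\int_{\mathbb{F}}s_m(\mathcal{E}),\qquad m=\dim\mathbb{F}, \]
by the defining property of Segre classes for $\pi\colon\P(\mathcal{E})\to\mathbb{F}$. Since the exact sequence above gives $s(\mathcal{E})=s(\operatorname{Sym}_3\mathcal{I})\,s(\mathcal{Q}\otimes\operatorname{Sym}_2\mathcal{J})$, the class $s_m(\mathcal{E})$ depends only on the Chern classes of the tautological bundles $\mathcal{I},\mathcal{J},\mathcal{Q}$ on $\mathbb{F}$, and the resulting intersection number is precisely what the Macaulay2 routine of \ref{scriptminN} evaluates.
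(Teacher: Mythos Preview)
Your proposal is correct and follows essentially the same route as the paper: realize $\mathbb{X}_{min}$ as the image of $\P(\mathcal{E})$ for a subbundle $\mathcal{E}\subset S_3\otimes\mathcal{O}_{\mathbb{F}}$ over $\mathbb{F}(2,N-2,N+1)$, argue generic injectivity by recovering the flag from a general $f$, and compute the degree as the top Segre class. Your single short exact sequence for $\mathcal{E}$ is a cleaner packaging than the paper's two-step construction via an intermediate bundle $\mathcal{V}_1$, but it yields the identical Segre formula $s(\mathcal{E})=s(\operatorname{Sym}_3\mathcal{I})\,s(\mathcal{Q}\otimes\operatorname{Sym}_2\mathcal{J})$; likewise your recovery of $I$ via $\langle Z^*\rangle$ and of $J$ via the vertex of the net $\{D(f):D\in I^\perp\}$ is a minor rephrasing of the paper's argument (which recovers $I$ as the unique $2$-plane in $\Sing(f)$ and $J$ by projecting the differential $df$).
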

\begin{proof}
    Consider the Grassmannian $\grass(N-2,S_1)$ of $2$-planes in $\P^N$. Denote by  $\mathcal{T}_{1}$ the tautological vector bundle of rank $N-2$, which  fits into the  tautological sequence 
\begin{equation}\label{tau1}
    0 \rightarrow \mathcal{T}_{1} \rightarrow \OX_{\grass(N-2,S_1)} \otimes S_1 \rightarrow \mathcal{Q}_{1} \rightarrow 0
\end{equation} For exemple, the fiber of  $\mathcal{T}_{1}$ over the $2$-plane $V(x_3,\ldots,x_N)$ is the subspace 
$I=[ x_3,\ldots,x_N]_{\C}\subset S_1$. 

Now consider $\grass(2,\mathcal{T}_{1})$, the Grassmannian of rank $2$ subundles of $\mathcal{T}_1$  with structure map $\rho: \grass(2,\mathcal{T}_1) \rightarrow \grass(N-2,S_1)$. For this variety, we have the following tautological sequence 
$$0 \rightarrow \mathcal{T}_{2} \rightarrow \rho^{*}\mathcal{T}_1 \rightarrow \mathcal{Q}_{2} \rightarrow 0$$ 
where $\mathcal{T}_2$ is a vector bundle of rank $2$, whose fiber over $(I,J)\in \grass(2,\mathcal{T}_{1})$ is $J$.

Observe  that $\grass(2,\mathcal{T}_1)$ is in fact the flag variety $\mathbb{F}:=\mathbb{F}(2,N-2,N+1)$. It has dimension $5(N-2)-4$.


Now consider the multiplication map:

$$\varphi: \textrm{Sym}_{2}(\mathcal{T}_{2}) \otimes S_1 \to  S_{3}$$ 
 given by $\varphi\left(\sum_{i}a_{i} \otimes b_{i}\right) = \sum_{i}a_{i}b_{i}$. It defines a map of vector bundles over  $\mathbb{F}$.
 Let $\mathcal{V}_1=im(\varphi)$, it is a subvector bundle of the trivial vector bundle  $\OX_\mathbb{F}\otimes S_{3}$.  We obtain an  exact sequence
\begin{equation}\label{seq1}
0 \rightarrow ker(\varphi) \rightarrow \textrm{Sym}_{2}(\mathcal{T}_{2}) \otimes S_1 \stackrel{\varphi}{\rightarrow} \mathcal{V}_{1}  \rightarrow 0
\end{equation}
where $ker(\varphi)=\wedge^2 \mathcal{T}_{2}\otimes \mathcal{T}_{2}$ (c.f. \cite{FFG}). So we get  an isomorphism $$\dfrac{\textrm{Sym}_{2}(\mathcal{T}_{2}) \otimes S_1}{ker(\varphi)} \stackrel{\overline{\varphi}}{\simeq} \mathcal{V}_{1}.$$ 

Next, we consider the following  map of vector bundles: 
\begin{equation*}
T: \mathcal{V}_{1} \oplus \textrm{Sym}_{3}(\rho^*\mathcal{T}_{1})  \rightarrow S_3 
    \end{equation*}

defined by  $T\left(\overline{\sum_{i}a_{i} \otimes b_{i}},h\right)= \overline{\varphi}\left(\overline{\sum_{i}a_{i} \otimes b_{i}}\right) + h = \sum_{i}a_{i}b_{i} + h$. It is not difficult to see that  $ker(T)=\overline{\varphi}\left(\dfrac{\textrm{Sym}_{2}(\mathcal{T}_{2}) \otimes \rho^*\mathcal{T}_1}{ker(\varphi)}\right)$.

Defining $\mathcal{E}= im(T)$, we obtain that the fibers of $\mathcal{E}$ consists of  the cubics of the normal form (\ref{minp}), and we have an exact sequence of vector bundles over $\mathbb{F}$

\begin{equation}\label{seq2}
0 \rightarrow \dfrac{\textrm{Sym}_{2}(\mathcal{T}_{2}) \otimes \rho^*\mathcal{T}_1}{ker(\varphi)} \rightarrow  \mathcal{V}_{1} \oplus \textrm{Sym}_{3}(\rho^*\mathcal{T}_{1}) \rightarrow \mathcal{E} \rightarrow 0.
\end{equation}

From (\ref{seq1}) and (\ref{seq2}), we obtain that $\rk\mathcal{E}=9+\binom{N}{3}$.



By considering the projectivization $\P(\mathcal{E})$ of the vector bundle $\mathcal{E}$, we conclude that $\mathbb X_{min}$ is the image by the second projection $p_2$:

\begin{displaymath}
\xymatrix{ &
\P(\mathcal{E}) \ar[dl]^{p_{1}} \ar[dr]^{p_{2}} & \\
\mathbb F & &\mathbb{X}_{min} \subset \P(S_3) }
\end{displaymath}

We conclude that $\mathbb X_{min}$ is irreducible.
We claim that $p_{2}$ is generically injective. Indeed, for a generic $f\in \mathbb{X}_{min}$, the singular set of $f$ contains a unique $2$-plane, from which we recover $I$. Consider now the differential of $f$, $df\in H^0(\P^N,\Omega_{\P^N}(3))\subset S_2\otimes S_1$. Projecting $df$ from  $S_2\otimes I$ we get $f_{0}dx_0+f_{1}dx_1+f_{2}dx_2$, and by construction, $f_{0},f_{1},f_{2}\in Sym_2([u,v])$ for some $u,v\in I$, then we recover $J$.

To compute the degree of $\mathbb X_{min} \subset \P(S_3)$, we prove  that, in the present setting, 
$\deg \mathbb{X}_{min}=\int s_{m}(\mathcal{E} )\cap [\mathbb F]$, the $m$-Segre class of $\mathcal{E}$, with  $m=\text{dim} \mathbb{F}$.
A similar equality will be used in the following sections, so we prove it in the more general setting. Indeed, by definition of push forward of cycles, we have 
 $p_{2*}[\P(\mathcal{E} )]=\deg(p_2)[ \mathbb{X}_{min}]$. 
 As we have proven that $\deg(p_2)=1$, putting $\nu=\dim \P(\mathcal{E})=\dim \mathbb{X}_{min}$ and  $H=c_{1}(\OX_{\P(S_3)}(1))$,  we obtain
$$
\deg \mathbb{X}_{min}=\int H^\nu\cap[\mathbb{X}_{min}]=
\int H^\nu\cap{}
p_{2*}[\P(\mathcal{E}) ]=
\int p_2^*
H^\nu\cap[\P(\mathcal{E})]$$ where the last equality was obtained from the projection formula. Now,  
$$ \int p_2^*
H^\nu\cap[\P(\mathcal{E})]=
\int\widetilde H^{\nu}\cap [\P(\mathcal{E}) ]
$$ 
where 
$\widetilde H=c_1(\OX_{\mathcal{E}}(1))
$.

Set $e=\rk(\mathcal{E})$. Thus $\dim \P(\mathcal{E})=e-1+m$.
Hence projection onto the basis $\mathbb{F}$ gives
$$
\int \widetilde H^{\nu}\cap [\P(\mathcal{E})]=\int
p_{1*}(\widetilde H^{\nu}\cap p_{1}^*[\mathbb{F}])=
\int s_{m}(\mathcal{E})\cap[\mathbb{F}].
$$ 
The last equality is the definition of Segre class, c.f. \cite[Chapter 3]{FUL}.

Observe that for the minimal family, we have 
$\dim \mathbb{X}_{min}=\dim \P(\mathcal{E})=e-1+m=4+\binom{N}{3}+5(N-2)$.

To compute $s_{m}(\mathcal{E})$, using sequence (\ref{seq2}) and Whitney formula (c.f. \cite[Thm 3.2]{FUL}),  we have:
\begin{equation}\label{W1}
    s(\mathcal{E})=c(\dfrac{\textrm{Sym}_{2}(\mathcal{T}_{2}) \otimes \rho^*\mathcal{T}_1}{ker(\varphi)})s(\mathcal{V}_{1} \oplus \textrm{Sym}_{3}(\rho^*\mathcal{T}_{1}))
\end{equation}

Other applications of the Whitney formula give us:
$$c(\dfrac{\textrm{Sym}_{2}(\mathcal{T}_{2}) \otimes \rho^*\mathcal{T}_1}{ker(\varphi)})=c(\textrm{Sym}_{2}(\mathcal{T}_{2}) \otimes \rho^*\mathcal{T}_1)s(ker(\varphi))$$
and 
$$s(\mathcal{V}_{1} \oplus \textrm{Sym}_{3}(\rho^*\mathcal{T}_{1}))=s(\mathcal{V}_1)s(\textrm{Sym}_{3}(\rho^*\mathcal{T}_{1}))$$

On the other hand, by sequence (\ref{seq1}) we have 
$$s(\mathcal{V}_{1})=c(ker(\varphi))s(\textrm{Sym}_{2}(\mathcal{T}_{2}) \otimes S_1)$$

Substituting the above  equalities in (\ref{W1}) and using the fact that Segre and Chern classes are inverses to each other we obtain:
\begin{equation}\label{segre}
    s(\mathcal{E})=c(\textrm{Sym}_{2}(\mathcal{T}_{2}) \otimes \rho^*\mathcal{T}_1)s(\textrm{Sym}_{3}(\rho^*\mathcal{T}_{1}))s(\textrm{Sym}_{2}(\mathcal{T}_{2}) \otimes S_1)
\end{equation}

To simplify (\ref{segre})  we twist equation (\ref{tau1})  by $\textrm{Sym}_{2}(\mathcal{T}_{2})$ and use Whitney formula to obtain 
$s(\textrm{Sym}_{2}(\mathcal{T}_{2}) \otimes S_1)c(\textrm{Sym}_{2}(\mathcal{T}_{2}) \otimes \rho^*\mathcal{T}_1)=s(\textrm{Sym}_{2}(\mathcal{T}_{2}) \otimes \rho^*\mathcal{Q}_1)$. Finally, we obtain 
$$ s(\mathcal{E})=s(\textrm{Sym}_{3}(\rho^*\mathcal{T}_{1}))s(\textrm{Sym}_{2}(\mathcal{T}_{2}) \otimes \rho^*\mathcal{Q}_1)=s(\textrm{Sym}_{3}(\rho^*\mathcal{T}_{1})\oplus \textrm{Sym}_{2}(\mathcal{T}_{2}) \otimes \rho^*\mathcal{Q}_1).
$$

\end{proof}
Next, we parametrize and compute the dimension and degree of the intersection  $\mathbb{X}_{min}\cap \mathcal{C}$.
There are three types of cones in $\mathbb{X}_{min}$, classified according to the position of the vertex. We denote these cones by $\mathcal{C}_i$; for  $i=1,2,3$. Let the vertex be $p=V(I_0)$. Then we have the following inequalities: $N-3\leq \dim(I\cap I_0)\leq N-2$ and $1\leq \dim(J\cap I_0)\leq 2$.
These inequalities will give us the following cases: 
\begin{enumerate}
    \item If $\dim(I\cap I_0)=N-3$ and $\dim(J\cap I_0)=1$, we can suppose that $I_0=\langle x_3,\dots ,x_{N-1}\rangle $. In this case $f \in \mathbb{X}\cap \mathcal{C}_1$ is projectively equivalent to $x_0x_{N-1}^2+h(x_3,\dots ,x_{N-1})$.  
\item If $\dim(I\cap I_0)=N-3$ and $J\subset  I_0$, we can suppose that $I_0=\langle x_4,\dots ,x_{N-1},x_N\rangle $, and $f \in \mathbb{X}_{min}\cap \mathcal{C}_2$ is projectively equivalent to
$x_0x_{N-1}^2+x_1x_{N-1}x_N+x_2x_{N}^2+h(x_4,\dots ,x_{N-1},x_N)$. It is easy to see that 
$\mathbb{X}_{min}\cap \mathcal{C}_1\subset \mathbb{X}_{min}\cap \mathcal{C}_2$.
\item If $J\subset I\subset I_0$, then the vertex $p=V(I_0)\subset V(I)=\P^2$.
In this case we can suppose that $I_0=\langle x_1,x_2,\dots ,x_{N-1}, x_N\rangle $. Thus  $f \in \mathbb{X}\cap \mathcal{C}_3$ is projectively equivalent to $x_{1}x_{N-1}x_{N} + x_{2}x_{N}^2 + h(x_{3},\dots ,x_{N})$.

We have $\mathbb{X}_{min}\cap \mathcal{C}_1\subset \mathbb{X}_{min}\cap \mathcal{C}_2\subset \mathbb{X}_{min}\cap \mathcal{C}_3$.
\end{enumerate}

To parametrize $\mathbb{X}_{min}\cap \mathcal{C}_3$, we proceed as follows: Start with the Grassmannian $\grass(N,S_1)$, let  $\T_0$ denote its tautological bundle. Consider  the Grassmannian $\grass(N-2,\T_0)$, parametrizing rank $(N-2)$ subundles of $\T_0$, with  natural structure map $\rho_0:\grass(N-2,\T_0)\to \grass(N,S_1)$. Let  $\T_1$ be the tautological bundle on $\grass(N-2,\T_0)$. Next, consider the Grassmannian $\grass(2,\T_1)$, which parametrizes rank $2$ subundles of $\T_1$,  with structure map $\rho_1:\grass(2,\T_1)\to \grass(N-2,\T_0)$. Denote by $\T_2$ the tautological bundle. 
This construction yields a  tower of fibrations: 

\[
\grass(2, \mathcal{T}_1) 
\xrightarrow{\ \rho_1\ }
\grass(N-2, \mathcal{T}_0) 
\xrightarrow{\ \rho_0\ }
\grass(N, S_1)
\]

We obtain that $\grass(2,\T_1)$ is the flag variey $\mathbb{F}:=\mathbb{F}(2,N-2,N,N+1)$ and $\dim \mathbb{F}=N+2(N-2)+2(N-4)=5(N-2)-2$.

The construction is completely analogous to what we did in the proof of Theorem \ref{thmMin}. Consider
 $$\wedge^2\T_2\otimes \T_2\to \textrm{Sym}_{2}(\T_2)\otimes \T_0\to \mathcal{V}_1$$

 and $$\frac{\textrm{Sym}_{2}(\T_2)\otimes \T_1}{\wedge^2\T_2\otimes \T_2}\to \mathcal{V}_1\oplus \textrm{Sym}_{3}(\T_1)\to \mathcal{F}.$$
Where to simplify the notation, we are omitting the pullbacks by $\rho_0$ and $\rho_1$.
We obtain a fiber bundle $\mathcal{F}$
such  that $\rk(\mathcal{F})=3N-2+\binom{N}{3}-(3(N-2)-2)=\binom{N}{3}+6$, and $\mathbb{X}_{min}\cap \mathcal{C}_3$ is the projection on the second factor of $\P(\mathcal{F})$. This projection is generically injective; therefore $$\dim (\mathbb{X}_{min}\cap \mathcal{C}_3)=5(N-2)-2+\binom{N}{3}+5=5(N-2)+\binom{N}{3}+3.$$
 Observe that $\mathbb{X}_{min}\cap \mathcal{C}_3$ is a divisor in $\mathbb{X}$.
From the construction of $\mathcal{F}$ we can obtain the degree of this divisor. With the notation as above,  we get the following result.

\begin{prop}\label{minwithcones}
The variety  $\mathbb{X}_{min}\cap \mathcal{C}_3$ is a divisor in $\mathbb{X}_{min}$ of degree given by the top Segre class $s_{m}(\textrm{Sym}_{3}(\mathcal{T}_{1})\oplus \textrm{Sym}_{2}(\mathcal{T}_{2}) \otimes \mathcal{Q}_1)$, where $m=5(N-2)-2$, and can be computed using the Script in \ref{script1conesminimal}.
    
\end{prop}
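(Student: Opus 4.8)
The plan is to follow, almost verbatim, the argument of Theorem~\ref{thmMin}, now carried out over the flag variety $\mathbb{F}=\mathbb{F}(2,N-2,N,N+1)$ and the bundle $\mathcal{F}$ constructed above. The divisor assertion costs nothing extra: the dimension count preceding the statement gives $\dim(\mathbb{X}_{min}\cap\mathcal{C}_3)=5(N-2)+\binom{N}{3}+3$, which is precisely $\dim\mathbb{X}_{min}-1$ by Theorem~\ref{thmMin}. Since $\mathbb{X}_{min}\cap\mathcal{C}_3$ is the image of the irreducible variety $\P(\mathcal{F})$ under $p_2$, it is irreducible, and being of codimension one in the irreducible $\mathbb{X}_{min}$ it is a prime divisor.

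For the degree, I would first record that $p_2\colon\P(\mathcal{F})\to\mathbb{X}_{min}\cap\mathcal{C}_3$ is generically injective, and then reproduce the exact chain of identities from Theorem~\ref{thmMin}: pushing forward $[\P(\mathcal{F})]$, applying the projection formula to $p_2^*H$ with $H=c_1(\OX_{\P(S_3)}(1))$, and recognizing the result, after pushing down to $\mathbb{F}$, as a Segre class. This yields
$$\deg(\mathbb{X}_{min}\cap\mathcal{C}_3)=\int s_m(\mathcal{F})\cap[\mathbb{F}],\qquad m=\dim\mathbb{F}=5(N-2)-2.$$

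It remains to simplify $s(\mathcal{F})$. Starting from the two exact sequences defining $\mathcal{F}$ and the identification of the kernel as $\wedge^2\T_2\otimes\T_2$, I would apply the Whitney formula repeatedly and cancel using the fact that total Segre and Chern classes are mutual inverses, exactly as in Theorem~\ref{thmMin}, to arrive at
$$s(\mathcal{F})=c(\textrm{Sym}_{2}(\T_2)\otimes\T_1)\,s(\textrm{Sym}_{2}(\T_2)\otimes\T_0)\,s(\textrm{Sym}_{3}(\T_1)).$$
Twisting the tautological sequence $0\to\T_1\to\T_0\to\mathcal{Q}_1\to 0$ on $\mathbb{F}$ by $\textrm{Sym}_{2}(\T_2)$ and applying Whitney once more collapses the first two factors, since $c(\textrm{Sym}_{2}(\T_2)\otimes\T_1)\,s(\textrm{Sym}_{2}(\T_2)\otimes\T_0)=s(\textrm{Sym}_{2}(\T_2)\otimes\mathcal{Q}_1)$. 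Hence
$$s(\mathcal{F})=s(\textrm{Sym}_{3}(\T_1))\,s(\textrm{Sym}_{2}(\T_2)\otimes\mathcal{Q}_1)=s\bigl(\textrm{Sym}_{3}(\T_1)\oplus\textrm{Sym}_{2}(\T_2)\otimes\mathcal{Q}_1\bigr),$$
and taking the degree-$m$ part gives the stated formula; its Macaulay2 implementation is the Script in~\ref{script1conesminimal}.

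The step I expect to be the real obstacle is the generic injectivity of $p_2$, i.e.\ recovering the flag $(\T_2,\T_1,\T_0)$ from a general $f\in\mathbb{X}_{min}\cap\mathcal{C}_3$. Here the new feature relative to Theorem~\ref{thmMin} is the extra layer $\T_0$: since $f$ defines a cone, its vertex is intrinsic and recovers the hyperplane $\T_0$ (the multipliers now span only the rank-$2$ quotient $\mathcal{Q}_1=\T_0/\T_1$, rather than the rank-$3$ quotient of the non-cone case, which is exactly why $\mathcal{Q}_1$ appears with rank $2$). The subspaces $\T_1$ and $\T_2$ are then recovered as in Theorem~\ref{thmMin}, from the distinguished linear space in $\Sing(X)$ and from projecting the differential $df$ to isolate the quadric part; the point requiring care is to check that for $f$ general these data are uniquely determined, which is where genericity is used.
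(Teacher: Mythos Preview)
Your proposal is correct and follows essentially the same approach as the paper: the construction of $\mathcal{F}$ over $\mathbb{F}(2,N-2,N,N+1)$, the dimension count yielding codimension one, the assertion of generic injectivity of $p_2$, and the identification of the degree with the top Segre class are exactly what the paper does in the discussion immediately preceding the proposition. You in fact go slightly further than the paper by writing out the Whitney simplification of $s(\mathcal{F})$ explicitly for the cone case (the paper only carries this out in the proof of Theorem~\ref{thmMin} and then simply states the analogous formula here), and your remark that the extra layer $\mathcal{T}_0$ is recovered from the vertex of the cone is the right way to handle the generic injectivity that the paper merely asserts.
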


\subsection{Parameter space of the Maximal family}\label{parameterMax}
Next, we consider $N=2k$ and denote by $\mathbb{X}_{max}$ the closure of the  maximal family. We observe that, by Remark \ref{rmkI^2},  $\mathbb{X}_{max}$ coincides with the locus in $\P(S_3)$ of cubics in $I^2$ for some ideal of a $k$-plane $I$. A generic element of $\mathbb{X}_{max}$ is projectively equivalent to

\begin{equation}\label{p6Mgeral}
 f:=\sum_{i=0}^{k}x_{i}g_{i}(x_{k+1},\dots ,x_{N}) + h(x_{k+1},\dots ,x_{N})  
\end{equation}
where $g_{0}, g_{1}, \ldots, g_{k}$ are quadratic forms  and $h$ is a cubic form in the variables $x_{k+1},\dots ,x_{N}$. 


\begin{thm}\label{thmMax}
For $N=2k$, the maximal family of cubics in $\P^N$ is a rational, projective, irreducible variety of dimension $$\dim \mathbb{X}_{max}=(k+1)(\binom{k+1}{2}+k)+\binom{k+2}{3}-1=\frac{1}{6}(4k^3+15k^2+11k-6)$$ and whose  degree is given by $s_m(\mathcal{E})$, where
$\mathcal{E}$ is a vector bundle over a variety $\grass(k,S_1)$ of dimension
$m=(k+1)k$.
The degree of this family can be computed using the Script in \ref{script4}.
\end{thm}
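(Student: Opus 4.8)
The plan is to mimic, over a single Grassmannian rather than a flag variety, the construction carried out for the minimal family in Theorem \ref{thmMin}, exploiting the description from Remark \ref{rmkI^2} that $\mathbb{X}_{max}$ is the closure of the locus of cubics lying in the degree-three part $(I^2)_3$ for $I$ the (linear part of the) ideal of a $k$-plane in $\P^{2k}$. First I would fix the base. Since a $k$-plane $V(I)\cong\P^{k}\subset\P^{2k}$ is cut out by the $k$-dimensional space $I\subset S_1$ of linear forms (recall $\dim S_1=N+1=2k+1$), I take $\grass(k,S_1)$, the Grassmannian of $k$-dimensional subspaces of $S_1$, whose dimension is $k(2k+1-k)=k(k+1)=m$. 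Let $\mathcal{T}$ be its tautological subbundle of rank $k$, sitting in
$$0 \to \mathcal{T} \to \OX_{\grass(k,S_1)} \otimes S_1 \to \mathcal{Q} \to 0,$$
with $\mathcal{Q}$ of rank $k+1$ and fiber of $\mathcal{T}$ over $[I]$ equal to $I$.

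Next I build a bundle $\mathcal{E}$ whose fibers are $(I^2)_3$. The natural candidate is the image of the multiplication map $\mu:\textrm{Sym}_2(\mathcal{T})\otimes S_1 \to \OX\otimes S_3$, $q\otimes \ell \mapsto q\ell$. Over $[I]$ its image is $(I^2)_3=S_1\cdot\textrm{Sym}_2(I)$, of constant dimension $(k+1)\binom{k+1}{2}+\binom{k+2}{3}$ (constancy follows from the transitivity of $GL(S_1)$ on $\grass(k,S_1)$), so $\mathcal{E}:=\textrm{im}(\mu)$ is locally free of that rank. Restricting $\mu$ to the subbundle $\textrm{Sym}_2(\mathcal{T})\otimes \mathcal{T}$ gives the multiplication onto $\textrm{Sym}_3(\mathcal{T})\subset\mathcal{E}$, and passing to the quotient by this subbundle identifies $\mathcal{E}/\textrm{Sym}_3(\mathcal{T})$ with $\textrm{Sym}_2(\mathcal{T})\otimes\mathcal{Q}$ (on fibers, $(I^2)_3=\textrm{Sym}_3(I)\oplus (S_1/I)\otimes\textrm{Sym}_2(I)$). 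This yields the exact sequence
$$0 \to \textrm{Sym}_3(\mathcal{T}) \to \mathcal{E} \to \textrm{Sym}_2(\mathcal{T})\otimes\mathcal{Q} \to 0,$$
exactly parallel to the minimal case. Forming $\P(\mathcal{E})$ with projections $p_1$ to $\grass(k,S_1)$ and $p_2$ to $\P(S_3)$, and noting that the base is rational and irreducible while $\P(\mathcal{E})$ is a projective bundle over it, we get that $\mathbb{X}_{max}=p_2(\P(\mathcal{E}))$ is an irreducible, rational, projective variety. Provided $p_2$ is generically injective, the dimension is $\dim\P(\mathcal{E})=m+\rk(\mathcal{E})-1=k(k+1)+(k+1)\binom{k+1}{2}+\binom{k+2}{3}-1$; factoring $(k+1)$ out of $k(k+1)+(k+1)\binom{k+1}{2}$ gives the stated $(k+1)(\binom{k+1}{2}+k)+\binom{k+2}{3}-1=\tfrac16(4k^3+15k^2+11k-6)$.

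Establishing the generic injectivity of $p_2$ is the crux, and I expect it to be the chief obstacle, since everything else is formal. The content is that a generic maximal cubic $f$ determines its $k$-plane $V(I)$ uniquely, so that its fiber under $p_2$ is a single point. For this I would invoke the intrinsic recovery of $I$ recalled in Section 2: for $f$ in the maximal family one has $\langle Z^*\rangle=\P^{k}=V(I)\subset\Sing(X)$ by the cited results of \cite{GRu}, while $Z^*$, the dual of the polar image, is canonically attached to $f$; hence $\langle Z^*\rangle$ reconstructs $V(I)$, and thus the point $[I]\in\grass(k,S_1)$, from $f$ alone. The delicate part is confirming that this reconstruction is valid on a dense open subset of the image (i.e.\ that $\dim Z^*=k-1$ and $\langle Z^*\rangle=\P^k$ do not degenerate generically), which is precisely where the degenerations to the minimal family noted in Remark \ref{rmkI^2} must be excluded.

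Finally, with birationality in hand, the degree computation is identical to the argument in Theorem \ref{thmMin}: by the projection formula and the definition of Segre classes,
$$\deg \mathbb{X}_{max}=\int \widetilde{H}^{\nu}\cap[\P(\mathcal{E})]=\int s_{m}(\mathcal{E})\cap[\grass(k,S_1)],$$
where $\widetilde{H}=c_1(\OX_{\mathcal{E}}(1))$, $\nu=\dim\P(\mathcal{E})$ and $m=k(k+1)$. Using the exact sequence above together with Whitney's formula gives $s(\mathcal{E})=s\big(\textrm{Sym}_3(\mathcal{T})\oplus \textrm{Sym}_2(\mathcal{T})\otimes\mathcal{Q}\big)$, which is exactly the expression fed to the Macaulay2 routine in \ref{script4} to evaluate the top Segre class.
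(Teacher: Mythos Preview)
Your proof is correct and follows the same overall strategy as the paper: parametrize $k$-planes by $\grass(k,S_1)$, realize $\mathcal{E}$ as the image of the multiplication map $\textrm{Sym}_2(\mathcal{T})\otimes S_1\to S_3$, check $p_2$ is birational, and compute the degree as the top Segre class. The differences are local. For the resolution of $\mathcal{E}$ you use the short exact sequence $0\to\textrm{Sym}_3(\mathcal{T})\to\mathcal{E}\to\textrm{Sym}_2(\mathcal{T})\otimes\mathcal{Q}\to 0$, whereas the paper uses the four-term Koszul-type complex $0\to\wedge^3\mathcal{T}\to\wedge^2\mathcal{T}\otimes\mathcal{T}\to\textrm{Sym}_2\mathcal{T}\otimes S_1\to\mathcal{E}\to 0$; yours is the cleaner of the two and mirrors exactly the simplified formula that the paper eventually reaches in the minimal case. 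For generic injectivity the paper argues that a general maximal cubic contains a unique $k$-plane in its singular locus, while you recover $V(I)$ as the span $\langle Z^*\rangle$; these identify the same linear space, so the arguments are interchangeable. One small inaccuracy: the expression $s(\textrm{Sym}_3(\mathcal{T})\oplus\textrm{Sym}_2(\mathcal{T})\otimes\mathcal{Q})$ you derive is not literally what Script~\ref{script4} evaluates---the script implements the paper's four-term form, computing the Segre class of the virtual bundle $\textrm{Sym}_2\mathcal{T}\otimes S_1-\wedge^2\mathcal{T}\otimes\mathcal{T}+\wedge^3\mathcal{T}$---but the two agree in $K$-theory (both equal $\mathcal{E}$) and hence return the same number.
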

\begin{proof}
     Recall the description of cubics in $\mathbb{X}_{max}$ as cubics contained in $I^2$, where $I$ is the ideal of a $k$-plane, given in Remark \ref{rmkI^2}.  
     Consider $\grass(k,S_1)$, the Grassmannian of $k$-planes in $\P^N$ with tautological sequence 
$$0 \rightarrow \mathcal{T} \rightarrow \grass(k,S_{1}) \times S_1 \rightarrow \mathcal{Q} \rightarrow 0.$$ 
Consider  the multiplication map 
$$\varphi:\textrm{Sym}_2(\mathcal{T})\otimes S_1\to  \mathcal O_{\grass(k,S_1)}\otimes S_3.$$
It defines a map of vector bundles whose image parametrizes the set of pairs $(I,f)\in \grass(k,S_1)\times S_3$ such that $f\in I^2$.

 We obtain the following  exact sequence  of vector bundles over $\grass(k,S_1)$:
\begin{equation}\label{varphi}
0 \rightarrow \wedge^3 \mathcal{T}\rightarrow \wedge^2\mathcal{T}\otimes \mathcal{T}\rightarrow \textrm{Sym}_2 \mathcal{T}\otimes S_1\xrightarrow[]{\varphi} \Ee\rightarrow 0
\end{equation} 
where $\Ee={\rm Im}\varphi$.

 Following the above construction, it is not difficult to see that $\rk \mathcal{E}=
 (2k+1)\binom{k+1}{2}+\binom{k}{3}-k\binom{k}{2}
 $.

We have the following projections

\begin{displaymath}
\xymatrix{ &\P(\mathcal{E}) \ar[dl]_{p_{1}} \ar[dr]^{p_{2}} & \\
\grass(k,S_1) & & \mathbb{X}_{max} \subset \P(S_3) }
\end{displaymath}

We claim that the map $p_{2}$ is generically injective. Consequently, the vairty  $\mathbb{X}_{max}=p_2(\P(\mathcal{E}))$ has dimension $\rk \mathcal{E}+\dim \grass(k,S_1)-1=(2k+1)\binom{k+1}{2}+\binom{k}{3}-k\binom{k}{2}+k(k+1)-1=\frac{1}{6}(4k^3+15k^2+11k-6)$.

The proof of the claim follows the same arguments as in the proof of  Theorem \ref{thmMin}: a generic cubic $f\in \mathbb{X}_{max}$ has a unique $3$-plane in its singular set, wich allows us to recover the ideal $I$. 


To compute the degree of $\mathbb{X}_{max}$,  we proceed exactly as in the proof of Theorem \ref{thmMin}. Using the sequence (\ref{varphi}), we have

$$s(\mathcal{E})=s(\textrm{Sym}_2 \mathcal{T}\otimes S_1)c(\wedge^2\mathcal{T}\otimes \mathcal{T})s(\wedge^3 \mathcal{T}).$$


\end{proof}
In what follows, we construct a parameter space for the intersection of $\mathbb{X}_{max}$ with cubic cones in $\P^N$.

Let the vertex of the cone be denoted by $p=V(I_0)$. Then, we have $k-1\leq \dim I\cap I_0\leq k$. Consequently, there are two types of cones: 
\begin{enumerate}
    \item  The case where $\dim I\cap I_0=k-1$, i.e. $p\not\in V(I)=\P^k$. We can suppose that $I=\langle x_{k+1},\dots ,x_{N}\rangle$, and $I_0=\langle x_0,\dots, x_{N-1}\rangle$, 
thus $f$ is projectively equivalent to $\sum_{i=0}^{k}x_{i}g^{i}(x_{k+1},\dots ,x_{N-1}) + h(x_{k+1},\dots ,x_{N-1})$.
We write $\mathbb{X}\cap \mathcal{C}_1$ the intersection with these cones.
    
    \item  The case where $\dim I\cap I_0=k$, i.e $I\subset I_0$ and $p=V(I_0)\in V(I)=\P^k$. In this case we can assume that $I_0=\langle x_1,\dots, x_N\rangle $ and $f$ is projectively equivalent to 
    $\sum_{i=1}^{k}x_{i}g^{i}(x_{k+1},\dots ,x_{N}) + h(x_{k+1},\dots ,x_{N})$.
We write $\mathbb{X}\cap \mathcal{C}_2$ the intersection with these cones.
Observe that $\mathbb{X}\cap \mathcal{C}_1\subset \mathbb{X}\cap \mathcal{C}_2$.
\end{enumerate}

 Next, we parametrize $\mathbb{X}\cap \mathcal{C}_2$. 
Consider the Grassmannian $\grass(N,S_1)$ with tautological bundle $\T_0$. Then consider the Grassmannian $\grass(k,\T_0)$, which parametrizing rank $k$ subundles of $\T_0$, equipped with its  tautological bundle $\T_1$.

This yields  a fibration: 
$$ \rho: \grass(k,\T_0)\to \grass(N,S_1).$$ 

Over $\grass(k,\T_0)$, we have the following  exact sequence  of vector bundles:
\begin{equation}\label{varphi0}
0 \rightarrow \wedge^3 \mathcal{T}_1\rightarrow \wedge^2\mathcal{T}_1\otimes \mathcal{T}_1\rightarrow \textrm{Sym}_2 \mathcal{T}_1\otimes \rho^*\T_0\xrightarrow[]{\varphi} \mathcal{F}\rightarrow 0
\end{equation} 
where $\mathcal{F}={\rm Im}\varphi$.  From this sequence, it follows that $\rk(\mathcal{F})=2k\binom{k+1}{2}+\binom{k}{3}-k\binom{k}{2}$.



We conclude that $\mathbb{X}\cap \mathcal{C}_2$ is the projection onto the second factor of $\P(\mathcal{F})$, and this projection is generically injective. Therefore, $$\dim (\mathbb{X}\cap \mathcal{C}_2)=2k\binom{k+1}{2}+\binom{k}{3}-k\binom{k}{2}+k^2+2k-1.$$
From the construction of $\mathcal{F}$, if follows that the degree of $\mathbb{X}\cap \mathcal{C}_2$ is given by the Segre class $s_{m}(\mathbb{F})$.

Hence, we have the following proposition:
\begin{prop}\label{maximalwithcones}
The variety  $\mathbb{X}_{max}\cap \mathcal{C}_2\subset \mathbb{X}$ has codimension $\binom{k}{2}$, and its degree 
can be computed using the Script in \ref{script1conesmaximal}.
    
\end{prop}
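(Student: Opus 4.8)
The plan is to mirror, almost verbatim, the argument already carried out in Proposition \ref{minwithcones} for the minimal family, since the geometric setup for $\mathbb{X}_{max}\cap\mathcal{C}_2$ has been arranged in exact parallel. The essential input is the exact sequence (\ref{varphi0}) of vector bundles over the flag variety $\grass(k,\T_1)\cong\mathbb{F}(k,N,N+1)$, which exhibits $\mathcal{F}=\operatorname{Im}\varphi$ as the bundle whose fiber over a point $(I\subset I_0)$ consists precisely of those cubics $f\in I^2$ that are also cones with vertex in $V(I_0)$, i.e.\ of the normal form $\sum_{i=1}^k x_i g^i + h$ with $g^i,h\in\K[x_{k+1},\dots,x_N]$. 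First I would record the dimension and codimension count: from $\rk(\mathcal{F})=2k\binom{k+1}{2}+\binom{k}{3}-k\binom{k}{2}$ together with $\dim\grass(k,\T_0)$ over $\grass(N,S_1)$, one computes $\dim(\mathbb{X}_{max}\cap\mathcal{C}_2)$ as already displayed, and subtracting from $\dim\mathbb{X}_{max}$ (Theorem \ref{thmMax}) yields codimension $\binom{k}{2}$; this is a purely numerical verification.

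Next I would establish that the projection $p_2\colon\P(\mathcal{F})\to\mathbb{X}_{max}\cap\mathcal{C}_2$ onto the second factor is generically injective, exactly as claimed. The argument is the one used twice already (Theorems \ref{thmMin} and \ref{thmMax}): a generic cubic of the relevant form has a \emph{unique} linear space of the expected dimension in its singular locus, from which the ideal $I$ of the $k$-plane is recovered; the additional datum of the vertex hyperplane (equivalently the flag $I\subset I_0$) is then reconstructed from the canonical form, since the absence of the variable $x_0$ singles out $I_0$. Granting generic injectivity, $\deg(p_2)=1$, and then the push-pull computation from the proof of Theorem \ref{thmMin} applies word for word: writing $\widetilde H=c_1(\OX_{\mathcal{F}}(1))$ and projecting to the base, one gets $\deg(\mathbb{X}_{max}\cap\mathcal{C}_2)=\int s_m(\mathcal{F})\cap[\mathbb{F}]$ with $m=\dim\mathbb{F}$, the top Segre class of $\mathcal{F}$.

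Finally I would reduce this top Segre class to an explicit computable expression via the Whitney formula applied to (\ref{varphi0}), obtaining $s(\mathcal{F})=s(\textrm{Sym}_2\mathcal{T}_1\otimes\rho^*\T_0)\,c(\wedge^2\mathcal{T}_1\otimes\mathcal{T}_1)\,s(\wedge^3\mathcal{T}_1)$, in complete analogy with the formula for $s(\mathcal{E})$ in Theorem \ref{thmMax}; one then simplifies using the tautological sequence for $\T_0$ and the fact that Chern and Segre classes are mutually inverse, leaving a polynomial in the Chern roots of $\mathcal{T}_1$ and the quotient bundles that the Macaulay2 script in \ref{script1conesmaximal} evaluates. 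The main obstacle is the generic-injectivity claim: one must check that for a \emph{general} member of this particular cone family the singular locus recovers the flag $(I,I_0)$ unambiguously, and in particular that no extra linear spaces appear in $\Sing(X)$ that would create fibers of positive dimension or identify distinct flags. Because $\mathbb{X}_{max}\cap\mathcal{C}_2$ parametrizes cones, its general member is more singular than a generic maximal cubic, so the uniqueness of the recovered $k$-plane requires a genericity argument on $(g^i,h)$ rather than a direct appeal to the non-cone case; this is the step where a short separate verification, rather than a citation, is needed.
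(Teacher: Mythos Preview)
Your proposal is correct and follows essentially the same approach as the paper: the construction of $\mathcal{F}$ via the exact sequence (\ref{varphi0}) over the flag variety $\grass(k,\mathcal{T}_0)\to\grass(N,S_1)$, the rank and dimension count yielding codimension $\binom{k}{2}$, and the identification of the degree with the top Segre class $s_m(\mathcal{F})$ are precisely what the paper does in the discussion immediately preceding the proposition. The paper is in fact terser than your outline: it simply asserts that the projection $p_2$ is generically injective without further argument, so your flagged concern about recovering the flag $(I,I_0)$ from a generic cone in the family is a point on which you are being more scrupulous than the paper itself.
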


\begin{rmk}
    For the case $N=2k$, the difference in dimensions is $$\dim(\mathbb{X}_{min})-\dim(\mathbb{X}_{max})=\frac{1}{6}(k-2)(4k^2-19k+15).$$ Thus,  for $N=4$, the dimension coincide; for $N=6$, we have  $$\dim(\mathbb{X}_{min})<\dim(\mathbb{X}_{max}),$$ and for $N\geq 8$, $$\dim(\mathbb{X}_{min})>\dim(\mathbb{X}_{max}).$$
\end{rmk}

\subsection{Tangent spaces to \texorpdfstring{$\mathcal{H}$}{H} in members of each family.}

In this section, we compute the dimension of the tangent space to $\mathcal{H}$
at points of the two families under study: $\mathbb{X}_{min}$ and $\mathbb{X}_{max}$. We conclude that for $N\geq 5$, $\mathbb{X}_{min}$ is the reduced part of a non-reduced component of $\mathcal{H}$, while $\mathbb{X}_{max}$ is an irreducible component of $\mathcal{H}$. 
Furthermore, in $\P^6$, we show  that neither family is contained in the closure of the other, see Proposition \ref{relation}.

Consider the morphism  $F=\text{det}\circ Hess$:

\begin{equation}
    S_3\stackrel{Hess}{\longrightarrow} \textrm{M}_{N+1} (S_1)\stackrel{\text{det}}{\longrightarrow} S_{N+1}
\end{equation}

where $\textrm{M}_{m} (S_1)$ denotes the space of $m\times m$ matrices with entries  in $S_1$. 
The $\binom{2N+1}{N}$  coordinates of $F$ define the subscheme of $\P(S_3)$ that we have denoted by $\mathcal{H}$.

Let $f\in \mathcal{H}$, then $T_f\mathcal{H}$ is given by  $\P(\ker(d_f F))$ and  $$d_f F=d_{Hess(f)}\text{det}\circ d_f Hess=d_{Hess(f)}\text{det}\circ Hess.$$ On the other hand, it is  well known  that if $A$ is a square matrix, then $\frac{\partial \text{det}}{\partial a_{ij}}(A)=A_{ij}$ , where $A_{ij}$ is the $(i,j)$-minor of $A$.
Therefore $d_f F: S_3\to S_{N+1}$ is then given by 
$$v\mapsto \sum_{i,j} \delta_{ij} v_{ij},$$ where 
$\delta_{ij}$ is the $(i,j)$-minor of $Hess_f$, and $v_{ij}=\frac{\partial^2 v}{\partial x_i \partial x_j}$.

\subsubsection{Tangent to \texorpdfstring{$\mathcal{H}$}{H} in a point of the minimal family}\label{tangentmin}

A cubic  $f \in \mathbb{X}_{min}$ is projectively equivalent to  
\begin{equation}
f=x_{0}g_{0} + x_{1}g_{1} + x_{2}g_{2} + h(x_{3},\dots ,x_{N-1},x_{N})
\end{equation} 

where $g_{i}$ are quadrics depending on $(x_{N-1},x_N)$ and $h(x_3,\dots ,x_{N-1},x_N)$ is a cubic. 

By Lemma \ref{Hessmin}, the Hessian matrix $\Hess_f$ is of the form:

$$
\Hess_f= \left(\begin{array}{@{}ccc|ccc|ccc@{}}
    0 & 0 & 0 & 0 & \cdots &0 && &  \\
    0 & 0 & 0 & 0 &\cdots &0 && G &  \\
    0 & 0 & 0 & 0 &\cdots &0 &&  &  \\
\hline
     0& 0 & 0 & h_{33} & \cdots &h_{3N-2} & * & * \\
     \vdots& \vdots & \vdots &  & \ddots & & \vdots & \vdots\\
     0& 0 & 0 & h_{3N-2} &\cdots &h_{N-2N-2}  &  *&*\\
     \hline
          &      &   &  *  &  *     &  *  & *& *\\
           &   G^t   &   &  *  &  *     &  *  & *& * \end{array}\right)
$$

Where $G\in M_{3,2}(\textrm{Sym}_{1}(x_{N-1},x_N))$ is the Jacobian of the map $(g_0,g_1,g_2)$.

For $i=0,1,2$, denote by $\Delta_i$ the determinant of the matrix obtained  by deleting the $i$-th row of $G$.
Also, denote by $\Delta$ the determinant of the matrix $\left(\begin{array}{@{}ccc@{}}
    h_{33} & \cdots & h_{3N-2} \\
    & \ddots &\\
        h_{3N-2} & \cdots &h_{N-2N-2} 
  \end{array}\right)$.

It is not difficult to see that:
\begin{enumerate}
    \item If $i\leq j$ and $i,j\in \{0,1,2\}$,  $\delta_{ij}:= (\Hess_f)_{ij}$ is given by $$\delta_{ij}=\Delta_{i}\Delta_j\Delta \in \textrm{Sym}_{4}(x_{N-1},x_N)\Delta. $$ 
\item If $2<i\leq j\leq N$,  $\delta_{ij}=0$.
\end{enumerate}


Thus, the differential   $d_f F: S_3\to S_{N+1}$ is then given by 
$d_fF(v)= \sum_{i,j\in \{0,1,2\}} \delta_{ij} v_{ij}$.

In order to compute the rank of $d_f F$, we decompose $S_3$ as the direct sum of the  following subspaces: 
\[E_1:=\textrm{Sym}_{3}(x_3,\dots ,x_N)\oplus \textrm{Sym}_{2}(x_3,\dots ,x_N)\otimes \textrm{Sym}_{1}(x_0,x_1,x_2),\]
\[E_2:=\textrm{Sym}_{1}(x_3,\dots ,x_{N-2})\otimes \textrm{Sym}_{2}(x_0,x_1,x_2).\]
\[E_3:= \textrm{Sym}_{1}(x_{N-1},x_N)\otimes \textrm{Sym}_{2}(x_0,x_1,x_2) .\]
\[E_4= \textrm{Sym}_{3}(x_0,x_1,x_2)\]
 
 Observe that $d_f F(E_1)=\{0\}$. On the other hand, using that $\delta_{ij}\in \textrm{Sym}_{4}(x_{N-1},x_N)\Delta$, we obtain: 
\begin{enumerate}
    \item 
$d_f F(E_2)\subset \textrm{Sym}_{1}(x_3,\dots ,x_{N-2})\otimes \textrm{Sym}_{4}(x_{N-1},x_N)\Delta$, 
so $$\text{dim}( d_f F(E_2))\leq 5(N-4).$$

\item $d_f F(E_3)\subset \textrm{Sym}_{5}(x_{N-1},x_N)\Delta$, 
so $$\text{dim}(d_f F(E_3))\leq 6.$$
    \item $\text{dim}(d_f F(\textrm{Sym}_{3}(x_0,x_1,x_2)))\leq 10$ .

\end{enumerate}    

Summarizing,  the rank of $d_f F$ is $\leq 5N-4$, and $ \text{dim}\ker (d_f F)\geq \binom{N+3}{3}-(5N-4)$.
So $\text{dim}(T_f\mathcal{H} )\geq \binom{N+3}{3}-(5N-4)-1>\text{dim}\mathbb{X}_{min}$ for all $f\in \mathbb{X}_{min}$. 


\subsubsection{Tangent to \texorpdfstring{$\mathcal{H}$}{H} in a point of the maximal family}

We begin by addressing the case $N=6$. 
We claim that the  cubic 
$$f=\frac{1}{2}(x_0x_4^2+x_1x_5^2+x_2x_6^2+x_3L^2),$$ where $L=x_4+x_5+x_6$,  is a smooth point of $\mathbb{X}_{max}$.
Indeed, we will prove that 
 $\rk(d_{f}F)=38$, so  $\dim T_f\mathcal{H}=84-38-1=45=\text{dim}(\mathbb{X}_{max})$. 



This shows that the open set of regular points in $\mathbb{X}_{max}$ is non-empty, so $\mathbb{X}_{max}$ is a reduced irreducible component of $\mathcal{H}$.

    Let prove the claim:
the Hessian of $f$ is of the form

$$
\Hess_{f}= \left(\begin{array}{@{}cccc|ccc@{}}
    
    0 & 0&0& 0 & x_4 & 0 & 0 \\
    0 & 0&0& 0 & 0 & x_5 & 0 \\
    0 & 0&0& 0 & 0 & 0 & x_6 \\
    0 & 0&0& 0 & L & L & L \\\hline
    x_4 & 0 & 0 & L &  &  \\
    0 & x_5 & 0 & L &  & H \\
    0&  0   & x_6 & L  &       & 
  \end{array}\right)
$$

Define  $G= \left(\begin{array}{@{}ccc@{}}
    
     x_4 & 0 & 0 \\
     0 & x_5 & 0 \\
     0 & 0 & x_6 \\
     L & L & L  
  \end{array}\right)
$, and 
for $i=0,1,2,3$, denote by $\Delta_i$ the determinant of the matrix obtained by deleting the $i$-th row of $G$.

It is not difficult to see that:
\begin{enumerate}
    \item If $i\leq j$ and $i,j\in \{0,1,2,3\}$,  $\delta_{ij}:= (\Hess_f)_{ij}$ is given by $$\delta_{ij}=\Delta_{i}\Delta_j\in \textrm{Sym}_{6}(x_4,x_5,x_6), $$ 
\item If $3<i\leq j\leq 6$,  $\delta_{ij}=0$.
\end{enumerate}

\begin{lema}\label{lema3}
The following sets are linearly independent:
\begin{enumerate}
    \item $\mathcal{A}(3):=\{\delta_{ij}\mid 0\leq i\leq j\leq 3\}$.
\item $\mathcal{B}(3):=$
$$\{x_4\delta_{ij}\mid i,j\neq 0\}\cup \{x_5\delta_{ij}\mid i,j\neq 1\}\cup \{x_6\delta_{ij}\mid i,j\neq 2\}.$$
\end{enumerate}

\end{lema}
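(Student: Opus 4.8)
The plan is to reduce the statement to explicit linear algebra in $R:=\K[x_4,x_5,x_6]$, organized by the $L$-adic filtration, where $L:=x_4+x_5+x_6$. First I would record the four cubics explicitly: computing the $3\times 3$ minors of $G$ gives, up to sign, $\Delta_0=Lx_5x_6$, $\Delta_1=-Lx_4x_6$, $\Delta_2=Lx_4x_5$ and $\Delta_3=x_4x_5x_6$, so that each $\delta_{ij}=\Delta_i\Delta_j$ is an explicit monomial times a power of $L$. The key structural observation is that $\Delta_0,\Delta_1,\Delta_2$ are divisible by $L$ while $\Delta_3$ is not; hence each $\delta_{ij}$ has a well-defined order of vanishing along $L$, equal to the number of indices among $i,j$ lying in $\{0,1,2\}$. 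Since $L$ is irreducible, $(L)$ is prime and $R$ is a domain, and reduction modulo $L$ is the substitution $x_4\mapsto -(x_5+x_6)$, under which a monomial stays nonzero. I would exploit this by peeling off a given relation one power of $L$ at a time.

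For part (1), suppose $\sum_{i\le j}\lambda_{ij}\delta_{ij}=0$. The only term of $L$-order $0$ is $\lambda_{33}\delta_{33}=\lambda_{33}(x_4x_5x_6)^2$; reducing modulo $L$ and using that $(x_4x_5x_6)^2\notin(L)$ forces $\lambda_{33}=0$. Dividing the remaining relation by $L$ and reducing modulo $L$ again leaves $x_4x_5x_6(\lambda_{03}x_5x_6-\lambda_{13}x_4x_6+\lambda_{23}x_4x_5)$; a quadratic with no square terms lies in $(L)$ only if it is zero, so $\lambda_{03}=\lambda_{13}=\lambda_{23}=0$. A final division by $L$ expresses the surviving relation as $L^2$ times a combination of the six pairwise-distinct degree-four monomials $x_5^2x_6^2,\,x_4x_5x_6^2,\,x_4x_5^2x_6,\,x_4^2x_6^2,\,x_4^2x_5x_6,\,x_4^2x_5^2$, whence all remaining $\lambda_{ij}$ vanish.

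For part (2) the same filtration applies, but here the three $L$-order-$0$ elements are $x_4\delta_{33},x_5\delta_{33},x_6\delta_{33}$, whose symbols modulo $L$ are proportional to $-(x_5+x_6),x_5,x_6$ and are therefore linearly dependent (their sum is $0$). Thus the first step forces only $\alpha_{33}=\beta_{33}=\gamma_{33}=:t$ (writing $\alpha,\beta,\gamma$ for the coefficients of the $x_4$-, $x_5$-, $x_6$-families), and the diagonal contribution becomes $t(x_4+x_5+x_6)\delta_{33}=tL\,\delta_{33}$, which drops one filtration level. After dividing by $L$ and reducing modulo $L$, one solves the resulting linear system; it leaves $t,\alpha_{13},\alpha_{23}$ free and determines the other coefficients of that layer. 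The crucial point is that the two lowest layers, after factoring out $L$, combine into $P:=s\,x_4x_5^2x_6^2-\alpha_{13}x_4^2x_5x_6^2+\alpha_{23}x_4^2x_5^2x_6$ with $s=t+\alpha_{13}-\alpha_{23}$, all of whose monomials have every exponent at most $2$, while the bottom layer is $L\cdot Q$ with $Q$ supported on degree-five monomials each having one exponent equal to $3$. Since the relation now reads $P+Q=0$ and these two monomial supports are disjoint, we get $P=Q=0$; reading off $P$ gives $s=\alpha_{13}=\alpha_{23}=t=0$, and then $Q=0$ kills the nine remaining coefficients.

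The routine part is the explicit evaluation of the minors and the monomial bookkeeping; the one genuinely delicate point, and the step I expect to be the main obstacle, is the $\mathcal{B}(3)$ case, precisely because the $L$-adic peeling does not diagonalize immediately: the three diagonal terms $x_k\delta_{33}$ are linearly dependent modulo $L$ and migrate down one filtration level, so the argument can be closed only at the deepest layer. What makes it close cleanly is the disjointness of the monomial support of the carried-down diagonal contribution (all exponents $\le 2$) from that of the lowest $L$-order forms (one exponent $=3$); isolating and verifying this disjointness is the heart of the proof.
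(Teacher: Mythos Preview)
Your argument is correct, and it genuinely differs from the paper's. For part~(1) the paper filters by powers of $x_4$ (and then $x_5$): since $\delta_{00}\notin\langle x_4\rangle$, $\delta_{0j}\in\langle x_4\rangle$, and $\delta_{ij}\in\langle x_4^2\rangle$ for $i\geq 1$, one peels off $\alpha_{00}$, then the $\alpha_{0j}$, and then finishes modulo $x_5$. Your $L$-adic peeling does the same job in reverse order (isolating $\lambda_{33}$ first), and your terminal step---that the six monomials $\Delta_i\Delta_j/L^2$ for $i,j\in\{0,1,2\}$ are pairwise distinct---is a clean way to close. For part~(2) the approaches diverge more sharply. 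The paper does not use $L$ at all: it observes that each subfamily sits in a distinguished ideal, namely $x_4\delta_{ij}\in x_4^3\langle x_6L,x_5L,x_5x_6\rangle^2$ for $i,j\neq 0$, and similarly for $x_5,x_6$. Writing a putative relation as $x_4^3A_1+x_5^3A_2+x_6^3A_3=0$, one gets $A_1\in\langle x_5,x_6\rangle^3\cap\langle x_6L,x_5L,x_5x_6\rangle^2=\K\cdot(x_5x_6)^2$, and then the relation $\sum\lambda_l\,x_l^3(\Pi/x_l)^2=\Pi^2\sum\lambda_l x_l=0$ forces all $\lambda_l=0$, after which part~(1) finishes each $A_l$ separately.

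Your $L$-adic route for~(2) is more delicate precisely because of the carrying you flag: the three order-$0$ elements $x_k\delta_{33}$ satisfy one relation modulo $L$, and the surviving parameter $t$ has to be transported down and killed only at the deepest level, via the disjointness of monomial supports. This works (I checked: with $u:=t-\alpha_{13}-\alpha_{23}$ one indeed gets $P=u\,x_4x_5^2x_6^2+\alpha_{13}x_4^2x_5x_6^2+\alpha_{23}x_4^2x_5^2x_6$ up to the sign convention for $\Delta_1$, and the support argument is valid). The trade-off is that the paper's variable-cube filtration avoids any carrying and generalises painlessly to the $n$-variable case needed later (their Lemma~\ref{lemak}), whereas extending your $L$-adic argument to general $k$ would require tracking several layers of carried parameters. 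For the fixed case $k=3$, though, your proof is a perfectly valid alternative.
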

\begin{proof}
    Denote by $\Pi=x_4x_5x_6$; then 
$\Delta_0=L\frac{\Pi}{x_{4}}, \Delta_1=L\frac{\Pi}{x_{5}}, \Delta_2=L\frac{\Pi}{x_{6}}$, and $\Delta_3=\Pi$.
To prove (1), suppose that we have $\sum_{0\leq i\leq j\leq 3}\alpha_{ij}\delta_{ij}=0$, or  
\[\alpha_{00}\delta_{00}+(\alpha_{01}\delta_{01}+\dots +\alpha_{03}\delta_{03})+\sum_{1\leq i\leq j\leq 3}\alpha_{ij}\delta_{ij}=0\]
Observe that $\delta_{00}\notin \langle x_4\rangle$, $\delta_{0j}\in \langle x_4\rangle$ for all $j\geq 1$ and $\delta_{ij}\in \langle x_4^2\rangle$ for all $i\geq 1$. So  we can write the above equation in the following way:
\begin{equation}\label{equationx4}
    \alpha_{00}\delta_{00}+x_4A_1+x_4^2A_2=0
\end{equation}
where $$A_1=\Delta_0(\alpha_{01}x_6L+\alpha_{02}x_5L+\alpha_{03}x_6x_5)\equiv$$$$\Delta_0(\alpha_{01}x_6(x_5+x_6)+\alpha_{02}x_5(x_5+x_6)+\alpha_{03}x_6x_5)\text{ mod }\langle x_4\rangle.$$ 
Thus (\ref{equationx4}) gives us $\alpha_{00}=0$, and $A_1\equiv 0$  mod $\langle x_4\rangle$.  From this we obtain $\alpha_{01}=\alpha_{02}=\alpha_{03}=0$. 
Hence \begin{equation}\label{equationx5}
    A_2=\alpha_{11}x_6^2L^2+x_5x_6L(\alpha_{12}L+\alpha_{13}x_6)+x_5^2(\alpha_{22}L^2+\alpha_{23}x_6L+\alpha_{33}x_6^2)=0.
\end{equation}
From (\ref{equationx5}), arguing modulo $\langle x_5\rangle $,  we obtain $\alpha_{i,j}=0$ for all $j\geq i\geq 1$.

To prove (2) observe that for $i,j\neq 0$ we have $x_4\delta_{ij}\in x_4\langle \Delta_1,\Delta_2,\Delta_3\rangle^2=x_4^3\langle x_6L,x_5L,x_5x_6 \rangle^2$. 
Analogously, for $i,j\neq 1$ we have \[x_5\delta_{ij}\in x_5^3\langle x_6L,x_4L,x_4x_6 \rangle^2\]
and for $i,j\neq 2$ we have \[x_6\delta_{ij}\in x_6^3\langle x_5L,x_4L,x_4x_5 \rangle^2.\]

 If  there was a zero linear combination of elements of $\mathcal{B}(3)$, we can write it in the following way:
 \begin{equation}\label{equationx6}
     x_4^3A_1+x_5^3A_2+x_6^3A_3=0
 \end{equation}

then  $A_1\in \langle x_5,x_6\rangle^3$. 
Using that  $A_1\in \langle x_6L,x_5L,x_5x_6 \rangle^2$ we conclude that  $A_1=\lambda_1 (x_5x_6)^2$, with $\lambda_1\in \C$.
Analogously, we find that  
$A_2=\lambda_2(x_4x_6)^2$ and $A_3=\lambda_3(x_4x_5)^2$, with $\lambda_2, \lambda_3\in \C$.
Using equation (\ref{equationx6}) we conclude that $\lambda_1=\lambda_2=\lambda_3=0$.

On the other hand, by definition, 
$x_4^2A_1$ is a linear combination of $\delta_{ij}$, which we have proven to be  linearly independent.
The same holds for $x_4^2A_2$ and $x_6^2A_3$.


\end{proof}

In order to obtain the rank of $d_fF$,  we decompose $S_3$ as a direct sum of the  following subspaces: 
\[E_1:=\textrm{Sym}_{3}(x_4,x_5,x_6)\oplus \textrm{Sym}_{2}(x_4,x_5,x_6)\otimes \textrm{Sym}_{1}(x_0,x_1,x_2,x_3),\]
\[E_2:=\textrm{Sym}_{1}(x_4,x_5,x_6)\otimes \textrm{Sym}_{2}(x_0,x_1,x_2,x_3).\]
\[E_3:= \textrm{Sym}_{3}(x_0,x_1,x_2,x_3) .\]

We have $d_f F(E_1)=\{0\}$. On the other hand, it is possible to prove that:
\begin{enumerate}
    \item $d_f F(E_2)$ contains  $\mathcal{B}(3)$.

\item  $\dim(d_f F(E_3))=20$. In fact,  $d_f F(E_3)$ contains the following set: 
$$\mathcal{C}(3):=\{x_0\delta_{ij}+l_{0,i,j}(x_1,\dots ,x_6) \mid i,j\in \{0,1,2,3\}\}\cup $$$$\{x_1\delta_{ij}+l_{1,i,j}(x_2,\dots ,x_6)\mid i,j\in \{1,2,3\}\}\cup $$$$\{x_2\delta_{ij}+l_{2,i,j}(x_3, \dots, x_6)\mid i,j\in \{2,3\}\}\cup \{x_3\delta_{33}\}.$$  

Using that $\mathcal{A}(3)$ is linearly independent, it is easy to see that $\mathcal{C}(3)$ is linearly independent. 
\end{enumerate}    

Observe that  $\#\mathcal{C}(3)=20$,  $\#\mathcal{B}(3)=18$ and that $\mathcal{C}(3)\cup \mathcal{B}(3)$ is also linearly independent.

Summarizing, as $d_f F(S_3)$ contains $\mathcal{B}(3)\cup \mathcal{C}(3)$, we conclude  that $\rk (d_f F)=38$, so  
 $\text{dim}(T_f\mathcal{H} )=\binom{6+3}{3}-38-1=45=\text{dim}\mathbb{X}_{max}$.

In what follows, we generalize the above computations to  $\P^{2k}$.

\begin{prop}

The following cubic is a non-singular point of $\mathbb{X}_{max}$ in $\P^{2k}$
\[f=x_0x_{k+1}^2+x_1x_{k+2}^2+\dots x_{k-1}x_{2k}^2+x_kL^2,\] where $L=x_{k+1}+\dots +x_{2k}$.
\end{prop}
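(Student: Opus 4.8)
The plan is to reduce the statement to a tangent-space dimension count and then reproduce, uniformly in $k$, the computation carried out above for $k=3$. Since $\mathbb{X}_{max}\subseteq\mathcal{H}$ is irreducible (Theorem \ref{thmMax}), at any $f\in\mathbb{X}_{max}$ one has $\dim\mathbb{X}_{max}\le\dim T_f\mathbb{X}_{max}\le\dim T_f\mathcal{H}$, so $f$ is a smooth point of $\mathbb{X}_{max}$ as soon as $\dim T_f\mathcal{H}=\dim\mathbb{X}_{max}$. As $T_f\mathcal{H}=\P(\ker d_fF)$, this amounts to $\rk(d_fF)=\binom{2k+3}{3}-1-\dim\mathbb{X}_{max}=:B$. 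The inequality $\dim T_f\mathcal{H}\ge\dim\mathbb{X}_{max}$ already gives the upper bound $\rk(d_fF)\le B$, so it will suffice to exhibit $B$ linearly independent elements in the image of $d_fF$.

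First I would record the shape of the Hessian and its cofactors. Splitting the variables into the blocks $x_0,\dots,x_k$ and $x_{k+1},\dots,x_{2k}$, the matrix $\Hess_f$ has block form $\left(\begin{smallmatrix}0&G\\ G^t&H\end{smallmatrix}\right)$ with top-left $(k+1)\times(k+1)$ zero block, where $G$ is the $(k+1)\times k$ matrix with diagonal $x_{k+1},\dots,x_{2k}$ and last row $(L,\dots,L)$. Deleting row $i$ and column $j$ and applying the block rule $\det\left(\begin{smallmatrix}0&A\\ B&C\end{smallmatrix}\right)=(-1)^{k}\det A\,\det B$ for square blocks (so $H$ is irrelevant), I get $\delta_{ij}=0$ unless $i,j\in\{0,\dots,k\}$, in which case $\delta_{ij}=\pm\Delta_i\Delta_j$, with $\Delta_i$ the maximal minor of $G$ obtained by deleting row $i$: explicitly $\Delta_k=\pm\Pi$ and $\Delta_i=\pm L\,\Pi/x_{k+1+i}$ for $i<k$, where $\Pi=x_{k+1}\cdots x_{2k}$. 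Thus $d_fF(v)=\sum_{0\le i,j\le k}\delta_{ij}v_{ij}$, exactly as for $k=3$. Next I would split $S_3=E_1\oplus E_2\oplus E_3$ by degree $(\le 1)$, $2$, $3$ in the first-block variables, with $E_2=\textrm{Sym}_1(x_{k+1},\dots,x_{2k})\otimes\textrm{Sym}_2(x_0,\dots,x_k)$ and $E_3=\textrm{Sym}_3(x_0,\dots,x_k)$. Since $\delta_{ij}\ne 0$ only for first-block indices, $d_fF(E_1)=0$, while $d_fF(E_2)\subset\textrm{Sym}_{2k+1}(x_{k+1},\dots,x_{2k})$ and $d_fF(E_3)\subset\textrm{Sym}_1(x_0,\dots,x_k)\otimes\textrm{Sym}_{2k}(x_{k+1},\dots,x_{2k})$ sit in different bigraded pieces, so the two images are independent.

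On $E_3$ the images of the monomial basis produce a set $\mathcal{C}(k)$ of $\binom{k+3}{3}=\dim E_3$ elements which, ordering the first-block variables, is triangular with leading terms $x_t\delta_{ij}$; its independence follows from that of $\mathcal{A}(k)=\{\delta_{ij}\}$, i.e. the generalization of Lemma \ref{lema3}(1). On $E_2$ the images are the $2\,x_m\delta_{ab}$, and I would single out $\mathcal{B}(k)=\bigcup_{i=0}^{k-1}\{x_{k+1+i}\delta_{ab}:0\le a\le b\le k,\ a,b\ne i\}$, of cardinality $k\binom{k+1}{2}$. Combining the two families, $\#\bigl(\mathcal{B}(k)\cup\mathcal{C}(k)\bigr)=k\binom{k+1}{2}+\binom{k+3}{3}$, and a direct computation gives $k\binom{k+1}{2}+\binom{k+3}{3}=\tfrac16(4k^3+9k^2+11k+6)=\binom{2k+3}{3}-1-\dim\mathbb{X}_{max}=B$.

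The heart of the matter, and the step I expect to be the main obstacle, is the generalization of Lemma \ref{lema3}(2): the linear independence of $\mathcal{B}(k)$. I would prove it by the same mechanism as for $k=3$, inserting the closed forms $\Delta_i=\pm L\,\Pi/x_{k+1+i}$, $\Delta_k=\pm\Pi$, and then eliminating the variables $x_{k+1},x_{k+2},\dots$ one at a time: a vanishing combination is reduced modulo $\langle x_{k+1}\rangle$ to kill the coefficients of the $x_{k+1}$-block, then modulo $\langle x_{k+2}\rangle$, and so on, each reduction being controlled because $x_{k+1+i}\delta_{ab}$ with $a,b\ne i$ carries the factor $x_{k+1+i}^{3}$ times a power of $L$ times a squarefree monomial insensitive to that elimination. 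Carrying this bookkeeping uniformly in $k$, rather than for three fixed variables, is the only genuinely delicate point. Granting it, $\mathcal{B}(k)\cup\mathcal{C}(k)$ is independent (the two parts lying in complementary bigraded pieces), whence $\rk(d_fF)\ge B$; together with the upper bound $\rk(d_fF)\le B$ this forces equality, so $\dim T_f\mathcal{H}=\dim\mathbb{X}_{max}$ and $f$ is a smooth point of $\mathbb{X}_{max}$.
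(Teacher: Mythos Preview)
Your approach is essentially the same as the paper's: the same decomposition $S_3=E_1\oplus E_2\oplus E_3$, the same families $\mathcal{A}(k)$, $\mathcal{B}(k)$, $\mathcal{C}(k)$, and the same reduction to the linear independence of $\mathcal{B}(k)$, which the paper proves by the variable-by-variable elimination (by induction on $k$) that you correctly identify as the crux. Your bigrading remark separating $d_fF(E_2)\subset\textrm{Sym}_{2k+1}(x_{k+1},\dots,x_{2k})$ from $d_fF(E_3)\subset\textrm{Sym}_1(x_0,\dots,x_k)\otimes\textrm{Sym}_{2k}(x_{k+1},\dots,x_{2k})$ is a clean way to handle the independence of $\mathcal{B}(k)\cup\mathcal{C}(k)$, which the paper leaves implicit.
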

\begin{proof}
    
The proof is based on the fact  that the sets 
 $\mathcal{A}(k)$ and $\mathcal{B}(k)$ are linearly independent (c.f. Lemma \ref{lemak} below). Indeed,  decompose $S_3$ as a direct sum of the  following subspaces: 
\[E_1:=\textrm{Sym}_{3}(x_{k+1},\dots ,x_N)\oplus \textrm{Sym}_{2}(x_{k+1},\dots ,x_N)\otimes \textrm{Sym}_{1}(x_0,\dots ,x_k),\]
\[E_2:=\textrm{Sym}_{1}(x_{k+1},\dots ,x_N)\otimes \textrm{Sym}_{2}(x_0,\dots ,x_k).\]
\[E_3:= \textrm{Sym}_{3}(x_0,\dots ,x_k).\]
We have that:
 
\begin{enumerate}
    \item $d_f F(E_2)$ contains the following linearly independent set
    $$\mathcal{B}(k):=\{x_{k+1}\delta_{ij}\mid i,j\neq 0\}\cup \{x_{k+2}\delta_{ij}\mid i,j\neq 1\}\cup \dots $$$$\cup \{x_{2k}\delta_{ij}\mid i,j\neq k-1 \}.$$ Observe that $\#\mathcal{B}(k)=k\binom{k+1}{2}$.

\item  $\dim(d_f F(E_3))=\binom{k+3}{3}$. In fact, using that $\mathcal{A}(k)$ is linearly independent, it is easy to see that $d_f F(E_3)$ contains the following linearly independent set: 
\[\mathcal{C}(k):=\{x_0\delta_{ij}+l_{0,i,j}(x_1,\dots ,x_{2k}) \mid i,j\in \{0,\dots , k\}\}\cup \]\[\{x_1\delta_{ij}+l_{1,i,j}(x_2,\dots ,x_{2k})\mid i,j\in \{1,\dots, k\}\}\cup \dots \]
\[\cup \{x_{k-1}\delta_{ij}+l_{k-1,i,j}(x_k, \dots, x_{2k})\mid i,j\in \{k-1,k\}\}\cup \{x_k\delta_{kk}\}.\]  
We remark that $\#\mathcal{C}(k)=\binom{k+3}{3}$.
\end{enumerate}

Therefore we have $\rk d_fF\geq k\binom{k+1}{2}+\binom{k+3}{3}$ and $\dim T_f \mathcal{H}\leq \binom{2k+3}{3}-(k\binom{k+1}{2}+\binom{k+3}{3})-1=\dim \mathbb{X}_{max}$. So $\dim T_f \mathcal{H}=\dim \mathbb{X}_{max}$.

\end{proof}


\begin{lema}\label{lemak}
For $n\geq 3$, consider $\Pi=x_{n+1}\dots x_{2n}$. For $i=0,\dots, n-1$, define 
$\Delta_i=L\frac{\Pi}{x_{n+1+i}}$ and $\Delta_n=\Pi$. Define $\delta_{ij}=\Delta_i\Delta_j$. The following sets are linearly independent:

\begin{enumerate}
    \item $\mathcal{A}(n)=\{\delta_{ij}\mid 0\leq i\leq j\leq n\}$.
    \item     $$\mathcal{B}(n):=\{x_{n+1}\delta_{ij}\mid i,j\neq 0\}\cup \{x_{n+2}\delta_{ij}\mid i,j\neq 1\}\cup \dots $$$$\cup \{x_{2n}\delta_{ij}\mid i,j\neq n-1 \}.$$
\end{enumerate}
\end{lema}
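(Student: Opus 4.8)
The plan is to prove both statements by exploiting the multiplicative structure of the $\Delta_i$ together with the two filtrations naturally available: the one by powers of the linear form $L$, and the one by powers of a single variable. Throughout I would abbreviate $y_j=x_{n+j}$, so that $\Pi=y_1\cdots y_n$, $L=y_1+\cdots +y_n$, and, writing $P_a=\Pi/y_a$, one has $\Delta_{a-1}=LP_a$ for $a=1,\dots ,n$ and $\Delta_n=\Pi$. All computations take place in the domain $R=\C[y_1,\dots ,y_n]$.

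For (1), I would filter a hypothetical relation $\sum_{i\le j}c_{ij}\delta_{ij}=0$ by the power of $L$ dividing each term: $\delta_{nn}=\Pi^2$ carries no factor $L$, each $\delta_{in}=LP_{i+1}\Pi$ carries exactly one, and each $\delta_{ij}$ with $i,j\le n-1$ carries $L^2$. Reducing modulo the prime $(L)$ kills every term except $\delta_{nn}$, and since $\overline{\Pi^2}\neq 0$ in the domain $R/(L)$ this forces $c_{nn}=0$. Dividing the surviving relation by $L$ and reducing modulo $(L)$ again isolates the middle layer, leaving $\sum_i c_{in}\,\overline{P_{i+1}\Pi}=0$; the key point is that the squarefree monomials $P_1,\dots ,P_n$ stay linearly independent modulo $(L)$, which is exactly where $n\ge 3$ enters (for $n=2$ one has $P_1+P_2\equiv 0$). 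This is a short check: substituting $y_n=-(y_1+\cdots +y_{n-1})$ and expanding produces distinct monomials whose coefficients must vanish. Hence all $c_{in}=0$, and the final relation $\sum c_{ij}P_{i+1}P_{j+1}=0$ involves the pairwise products $P_{i+1}P_{j+1}=\Pi^2/(y_{i+1}y_{j+1})$, which are \emph{distinct monomials} and so visibly independent, giving $c_{ij}=0$.

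For (2), the starting observation is that $\Delta_i$ is divisible by $y_a$ precisely when $i\neq a-1$, since $\Delta_{a-1}=LP_a$ is the unique $\Delta_i$ not divisible by $y_a$ (as $y_a$ divides neither $L$ nor $P_a$). Consequently every generator $y_a\delta_{ij}$ of $\mathcal B(n)$ (with $i,j\neq a-1$) is divisible by $y_a^3$, and a hypothetical relation rewrites as $\sum_{a=1}^n y_a^3 A_a=0$, where $A_a=\sum_{i,j\neq a-1}c^a_{ij}\,\Theta^a_i\Theta^a_j$ lies in the square $\langle \Theta^a_i : i\neq a-1\rangle^2$ of the span of the $\Theta^a_i=\Delta_i/y_a$. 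Because $y_1^3,\dots ,y_n^3$ is a regular sequence, $y_a^3$ is a non-zerodivisor modulo $(y_c^3 : c\neq a)$ (the quotient is a free $\C[y_a]$-module), so $A_a\in (y_c^3 : c\neq a)$ for every $a$.

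It then remains to prove the Main Lemma $\langle \Theta^a_i : i\neq a-1\rangle^2 \cap (y_c^3 : c\neq a)=\{0\}$, which yields $A_a=0$, whereupon part (1) applied to $y_a^2A_a=\sum_{i,j\neq a-1}c^a_{ij}\delta_{ij}=0$ forces all $c^a_{ij}=0$. To prove it I would reduce the $\binom{n+1}{2}$ products $\Theta^a_i\Theta^a_j$ modulo $(y_c^3:c\neq a)$ and separate them by degree in the free variable $y_a$. Writing $\pi=\prod_{c\neq a}y_c$, the generators are $\pi$ and the $L\pi/y_b$ ($b\neq a$); a product of two of the latter has top $y_a^2$-term $\pi^2/(y_by_{b'})$, a mixed product $(L\pi/y_b)\cdot\pi$ has $y_a$-degree at most $1$, and $\pi\cdot\pi=\pi^2$ has $y_a$-degree $0$. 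Since the ideal is generated by the $y_c^3$ with $c\neq a$, the quotient is $y_a$-graded: comparing $y_a^2$-parts kills the type-$A$ coefficients (the surviving $\pi^2/(y_by_{b'})$ are distinct monomials), then the $y_a^1$-parts kill the mixed coefficients, and the $y_a^0$-part kills the last. The main obstacle is precisely this Main Lemma: one must check that reduction modulo the \emph{cube} ideal $(y_c^3:c\neq a)$, rather than the weaker $(y_c:c\neq a)^3$, collapses no product, and the $y_a$-grading is what decouples the three types and reduces everything to independence of distinct monomials.
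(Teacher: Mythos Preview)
Your argument is correct, and for part~(2) it runs essentially parallel to the paper's: both write a hypothetical relation as $\sum_a y_a^3A_a=0$, deduce a containment for each $A_a$, show $A_a=0$, and then invoke part~(1). Your version is in fact more precise on the key point: you use the ideal $(y_c^3:c\neq a)$, whereas the paper writes $\langle y_c:c\neq a\rangle^3$. The latter, read literally as the cube of the maximal ideal, is too weak to force $A_a=\lambda_a\pi^2$ once $n\ge 4$ (the constraint becomes vacuous since $A_a$ already has $y_a$-degree $\le 2$ while $\deg A_a = 2n-2$); your cube-powers ideal, together with the $y_a$-grading, is exactly what makes the Main Lemma go through cleanly.

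For part~(1) your route is genuinely different. The paper argues by induction on $n$, filtering a relation by powers of a single variable $y_1$ and reducing to the case $n-1$. You instead filter by powers of $L$ in one shot: the three layers $\Pi^2$, $L\cdot P_a\Pi$, $L^2\cdot P_aP_b$ are peeled off by successive reduction modulo the prime $(L)$, with the only nontrivial check being the linear independence of $\bar P_1,\dots,\bar P_n$ in $R/(L)$. This avoids the induction entirely and makes the role of the hypothesis $n\ge 3$ transparent (it is exactly where $\bar P_1,\dots,\bar P_n$ remain independent). The paper's inductive approach has the advantage of reusing the base case $n=3$ verbatim; your approach has the advantage of being self-contained and of isolating the bottom layer $\{P_aP_b\}$ as a set of distinct monomials, which is immediate.
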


\begin{proof}
We proceed by induction on $n\geq 3$. 
The case $n=3$ is covered by Lemma \ref{lema3}.

Assume that the result holds for $n=k$. For $n=k+1$, suppose that we have $\sum_{0\leq i\leq j\leq k+1}\alpha_{ij}\delta_{ij}=0$.
Observe that  
for all $ j>0$, $\delta_{0j}\in \langle x_{k+2}\rangle$,  and for all $i>0$, $\delta_{ij}\in \langle x_{k+2}^2\rangle$, so we can write 
\begin{equation}\label{deltas}
    \sum_{0\leq i\leq j\leq k+1}\alpha_{ij}\delta_{ij}=\alpha_{00}\delta_{00}+x_{k+2}A_1+x_{k+2}^2A_2=0,
\end{equation}

where $A_1=\sum_{1\leq j} \alpha_{0j}\frac{\delta_{0j}}{x_{k+2}}$ and $A_2=\sum_{1\leq i\leq j} \alpha_{ij}\frac{\delta_{ij}}{x_{k+2}^2}$.

From (\ref{deltas}), we have that  
$\alpha_{00}\delta_{00}\in \langle x_{k+2}\rangle$, that implies $\alpha_{00}=0$.
So we have $A_1+x_{k+2}A_2=0$, therefore $A_1\in \langle x_{k+2}\rangle$. We can factor
$$A_1=\Delta_0(\alpha_{01}\frac{\Delta_1}{x_{k+2}}+\dots  +\alpha_{0k+1}\frac{\Delta_{k+1}}{x_{k+2}}),$$ and deduce that $(\alpha_{01}\frac{\Delta_1}{x_{k+2}}+\dots  +\alpha_{0k+1}\frac{\Delta_{k+1}}{x_{k+2}})\in \langle x_{k+2}\rangle$.

Observe that $\frac{\Delta_j}{x_{k+2}}\in \langle x_{k+3}\rangle$, except for $j=1$, then we have  \[0\equiv \alpha_{01}\frac{\Delta_1}{x_{k+2}}+\dots  +\alpha_{0k+1}\frac{\Delta_{k+1}}{x_{k+2}}\equiv \alpha_{01}\frac{\Delta_1}{x_{k+2}}\text{ mod } \langle x_{k+2},x_{k+3}\rangle\] so $\alpha_{01}=0$. Analogously we obtain $\alpha_{0j}=0$ for all $j=1\dots k+1$.

Returning to (\ref{deltas}), we have 
$A_2=\sum_{1\leq i\leq j} \alpha_{ij}\frac{\delta_{ij}}{x_{k+2}^2}=0$. 

Observe that $\frac{\delta_{ij}}{x_{k+2}^2}\equiv \frac{\Delta_i}{x_{k+2}} \frac{\Delta_j}{x_{k+2}}\text{ mod } \langle x_{k+2}\rangle$. Thus $\frac{\delta_{ij}}{x_{k+2}^2}
\equiv \hat{\delta}_{ij}\text{ mod } \langle x_{k+2}\rangle$, where $\hat{\delta}_{ij}$ denotes the corresponding deltas for $n=k$, so by inductive hypothesis, they are linearly independent and $\alpha_{ij}=0$ for $1\leq i\leq j\leq k+1$.

The proof for $\mathcal{B}(n)$, is the same as for $n=3$.
Observe that 
\begin{equation}\label{xn+1}
    x_{n+1}\delta_{ij}\in \langle \Delta_1,\dots ,\Delta_n\rangle^2=x_{n+1}^3\langle \frac{\Pi}{x_{n+1}x_{n+2}}L,\dots, \frac{\Pi}{x_{n+1}x_{2n}}L ,\frac{\Pi}{x_{n+1}}\rangle^2
\end{equation} for all $i,j\neq 0$

Analogously, for $i,j\neq l$ we have 
\[x_{n+1+l}\delta_{ij}\in \langle \Delta_0,\dots,\hat{\Delta_l},\dots  ,\Delta_n\rangle^2=\]
\[x_{n+1+l}^3\langle \frac{\Pi}{x_{n+1+l}x_{n+1}}L,\dots ,\widehat{\frac{\Pi}{x_{n+1+l}^2}L} ,\dots, \frac{\Pi}{x_{n+1+l}x_{2n}}L,\frac{\Pi}{x_{n+1+l}}\rangle^2\]

 If  there was a zero linear combination of elements of $\mathcal{B}(n)$, we can write it in the following way:
 \begin{equation}\label{equationx2n}
     x_{n+1}^3A_1+x_{n+2}^3A_2+\dots +x_{2n}^3A_n=0,
 \end{equation}

hence  $A_1\in \langle x_{n+2},\dots, x_{2n}\rangle^3$. 
Using (\ref{xn+1}) we conclude that  $A_1=\lambda_1 (\frac{\Pi}{x_{n+1}})^2$.
Analogously we find 
$A_l=\lambda_l(\frac{\Pi}{x_{n+l}})^2$, for all $1\leq l\leq n$.
Using equation (\ref{equationx2n}) we conclude that $\lambda_1=\dots=\lambda_n=0$, i.e. $A_1=\dots=A_n=0$.
On the other hand, by definition, 
$0=x_{n+l}^2A_l$ is a linear combination of $\delta_{ij}$, which we have proven are linearly independent, so all the coefficients in (\ref{equationx2n}) are zero.

\end{proof}

\section{\texorpdfstring{The Lefschetz locus in $\Gor(1,n,n,1)$ for $n\leq 7$}{}}   
In this section, we gather all the previous information to study the cases of algebras in  $\Gor(1,n,n,1)$, for $n\leq 7$. 
We compute explicitly the dimension and degrees of the minimal and maximal family in $\P^{5}$ and $\P^{6}$ and we describe the locus of algebras failing SLP in this two particular cases.

\subsection{\texorpdfstring{The Lefschetz locus in $\Gor(1,6,6,1)$}{}}

As we have already seen, any Artinian Gorenstein algebra in $\Gor(1,6,6,1)$  corresponds to a homogeneous polynomial $f \in \K[x_0,\dots,x_5]_{3}$ (that is not a cone). In other words, $\Gor(1,6,6,1)=\P(S_3)\setminus \mathcal{C}=\P^{55}\setminus \mathcal{C}$. Moreover, by Remark \ref{Ns}, we know that  the algebra fails to have SLP if and only if $f\in \mathbb{X}_{min}$.
\begin{prop}
The locus in $\Gor(1,6,6,1)$ of algebras satisfying SLP is $\Gor(1,6,6,1) \setminus \mathbb{X}_{min}$.
\flushright{$\square$}
\end{prop}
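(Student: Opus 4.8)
The plan is to reduce the statement, via the Hessian criterion for SLP and the classification of cubics with vanishing Hessian in $\P^5$, to the set-theoretic identity $\mathbb{X}_{min}\cap \Gor(1,6,6,1)=\{f\in \Gor(1,6,6,1):\hess_f=0\}$. Recall that here $n=6$ means $N=5$, so the relevant cubics live in $\P^5$. By Remark \ref{ch} (the socle-degree-$3$ case of Theorem \ref{wtn}), an algebra $A_f\in \Gor(1,6,6,1)$ satisfies SLP if and only if $\hess_f\neq 0$; equivalently, $A_f$ fails SLP if and only if $\hess_f=0$. Thus it suffices to prove that, among cubics that are not cones in $\P^5$, vanishing of the Hessian is equivalent to membership in $\mathbb{X}_{min}$.

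First I would establish the inclusion $\{f\in \Gor(1,6,6,1):\hess_f=0\}\subseteq \mathbb{X}_{min}$. Any such $f$ is a cubic in $\P^5$ that is not a cone (since $A_f\in \Gor(1,6,6,1)$ forces $f$ not to be a cone) and has $\hess_f=0$. By the $N=5$ case of Remark \ref{Ns}, which records Theorem 5.3 of \cite{GRu}, every cubic hypersurface with vanishing Hessian that is not a cone belongs to the minimal family, hence a fortiori to its closure $\mathbb{X}_{min}$. For the reverse inclusion I would use $\mathbb{X}_{min}\subseteq \mathcal{H}$: the minimal family consists of cubics with $\hess_f=0$, and since $\mathcal{H}\subset \P(S_3)$ is closed, its closure $\mathbb{X}_{min}$ is still contained in $\mathcal{H}$. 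Consequently every $f\in \mathbb{X}_{min}\cap \Gor(1,6,6,1)$ satisfies $\hess_f=0$, so $A_f$ fails SLP. Combining the two inclusions yields the desired equality, and taking complements inside $\Gor(1,6,6,1)$ identifies the SLP locus with $\Gor(1,6,6,1)\setminus \mathbb{X}_{min}$.

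Since all the substantive work has been done in the cited results, there is no genuine obstacle here; the only point requiring care is the bookkeeping around the closure and around cones. One must note that passing from the minimal family to $\mathbb{X}_{min}$ cannot introduce non-cone cubics with nonvanishing Hessian (guaranteed precisely by $\mathbb{X}_{min}\subseteq \mathcal{H}$), and that the classification of Remark \ref{Ns} is only ever applied to genuine non-cones, which is automatic because $\Gor(1,6,6,1)$ excludes cones by construction. Both checks are immediate, which is why the statement is recorded with a $\square$.
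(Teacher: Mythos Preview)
Your argument is correct and matches the paper's approach exactly: the paper records this proposition with a bare $\square$ because it is an immediate consequence of Remark \ref{ch} (the Hessian criterion for SLP in socle degree $3$) together with the $N=5$ case of Remark \ref{Ns}, which is precisely the reduction you carry out. Your additional care about the closure $\mathbb{X}_{min}\subseteq \mathcal{H}$ and the exclusion of cones from $\Gor(1,6,6,1)$ just makes explicit what the paper leaves implicit.
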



Using the results  in Theorem \ref{thmMin} we obtain 

\begin{thm}\label{TheoremP5}
The locus $\mathbb{X}_{min}\subset \P^{55}$ is a rational irreducible projective variety of dimension $29$ and degree $51847992$. Moreover,  $\mathcal{H}_{red}=\mathcal{C}\cup \mathbb{X}_{min}$.

\end{thm}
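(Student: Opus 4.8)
The plan is to read off rationality, irreducibility, the dimension, and the degree directly from Theorem~\ref{thmMin} specialised to $N=5$, and then to prove the set-theoretic identity $\mathcal{H}_{red}=\mathcal{C}\cup \mathbb{X}_{min}$ by combining the parametrisation of $\mathbb{X}_{min}$ with the Gordan--Noether/Perazzo classification recalled in Remark~\ref{Ns}.

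First I would set $N=5$, so that $\dim S_3=\binom{8}{3}=56$ and $\P(S_3)=\P^{55}$. Theorem~\ref{thmMin} realises $\mathbb{X}_{min}$ as the image of the generically injective projection $p_2\colon \P(\mathcal{E})\to \P(S_3)$, where $\mathcal{E}$ is a vector bundle over the flag variety $\mathbb{F}=\mathbb{F}(2,3,6)$. Since $\mathbb{F}$ is rational and $\P(\mathcal{E})$ is a projective bundle over it, $\P(\mathcal{E})$ is rational and irreducible, and the generic injectivity of $p_2$ transfers both properties to $\mathbb{X}_{min}$; the dimension formula $5(N-2)+\binom{N}{3}+4$ evaluates to $15+10+4=29$. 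For the degree I would use the Segre-class expression from Theorem~\ref{thmMin}, namely $\deg\mathbb{X}_{min}=\int s_m(\mathcal{E})\cap[\mathbb{F}]$ with $m=\dim\mathbb{F}=5(N-2)-4=11$ and $s(\mathcal{E})=s(\textrm{Sym}_3(\rho^*\mathcal{T}_1)\oplus \textrm{Sym}_2(\mathcal{T}_2)\otimes \rho^*\mathcal{Q}_1)$, and evaluate this intersection number with the Macaulay2 script in \ref{scriptminN}, which returns $51847992$.

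For the decomposition I would argue on the level of points and then pass to reduced structures. The inclusion $\mathcal{C}\cup\mathbb{X}_{min}\subseteq \mathcal{H}$ is clear: cones have vanishing hessian, while a general member of the minimal family is a special Perazzo cubic with $\hess_f=0$, so $\mathbb{X}_{min}\subseteq \mathcal{H}$ because $\mathcal{H}$ is closed; as the left-hand side is reduced, $\mathcal{C}\cup\mathbb{X}_{min}\subseteq \mathcal{H}_{red}$. For the converse, let $f$ satisfy $\hess_f=0$. If $V(f)$ is a cone then $f\in\mathcal{C}$; if $f$ is reducible then by \cite{DP} it is already a cone, hence again in $\mathcal{C}$; and if $f$ is irreducible and not a cone, then the $N=5$ case of the classification (Remark~\ref{Ns}, i.e.\ \cite[Theorem 5.3]{GRu}) forces $f$ into the minimal family, so $f\in\mathbb{X}_{min}$. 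Thus $\mathcal{H}_{red}\subseteq \mathcal{C}\cup\mathbb{X}_{min}$. Finally $\mathcal{C}$ and $\mathbb{X}_{min}$ are both irreducible and reduced, and neither is contained in the other: the generic minimal cubic is not a cone, so $\mathbb{X}_{min}\not\subseteq\mathcal{C}$, while the cone locus $\mathcal{C}$ is irreducible of dimension $39$ and hence not contained in the $29$-dimensional $\mathbb{X}_{min}$. Their union is therefore reduced and equals $\mathcal{H}_{red}$, which simultaneously exhibits $\mathcal{C}$ and $\mathbb{X}_{min}$ as the two irreducible components.

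I expect the difficulty to be of two kinds. The computational core is the evaluation of the top Segre class: one must expand $s(\mathcal{E})$ in the Chern roots of $\mathcal{T}_1,\mathcal{T}_2,\mathcal{Q}_1$ on $\mathbb{F}(2,3,6)$ and integrate against the fundamental class, which is feasible only by machine and is exactly what the appended script performs. The conceptual point requiring care is the reverse inclusion: to conclude a genuine equality of sets (rather than merely of the irreducible non-cone loci) one must invoke the full classification in $\P^5$ together with the reduction to irreducible forms from \cite{DP}, so that no spurious component of $\mathcal{H}_{red}$ supported on reducible cubics is overlooked.
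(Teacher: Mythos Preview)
Your proposal is correct and follows essentially the same approach as the paper: you specialise Theorem~\ref{thmMin} to $N=5$ for rationality, irreducibility, dimension and degree, and you invoke the classification recorded in Remark~\ref{Ns} for the decomposition $\mathcal{H}_{red}=\mathcal{C}\cup\mathbb{X}_{min}$. In fact your treatment of the ``Moreover'' clause is more detailed than the paper's own proof, which only records $\dim\P(\mathcal{E})=29$ and $s_{11}(\mathcal{E})=51847992$ and leaves the set-theoretic identity to the discussion immediately preceding the theorem.
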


\begin{proof}
By  Theorem \ref{thmMin}  we obtain $\dim(\P(\mathcal{E}))=29$.
The degree is given by $s_{11}(\mathcal{E})$, and we compute it using the Scripts in \S\ref{scripts} for $N=5$: $s_{11}(\mathcal{E})=51847992$.
\end{proof}

In the same way, applying Proposition \ref{minwithcones} we have:
\begin{prop}
In $\P^5$,  the intersection of the minimal family with cones  $\mathbb{X}_{min}\cap \mathcal{C}$ is a divisor in $\mathbb{X}$ of degree $98048160$. 
    
\end{prop}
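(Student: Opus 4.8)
The plan is to deduce this statement as a direct specialization of Proposition \ref{minwithcones} to the case $N=5$, together with one preliminary reduction and one Segre-class computation. First I would record that the intersection of the minimal family with the \emph{entire} locus of cubic cones coincides with its intersection with the largest cone stratum $\mathcal{C}_3$. This is immediate from the classification of cones in $\mathbb{X}_{min}$ by the position of the vertex $p=V(I_0)$ relative to the flag $J\subset I$: the three strata are nested,
$$\mathbb{X}_{min}\cap\mathcal{C}_1\subset \mathbb{X}_{min}\cap\mathcal{C}_2\subset \mathbb{X}_{min}\cap\mathcal{C}_3,$$
as already established in the discussion preceding Proposition \ref{minwithcones}. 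Hence $\mathbb{X}_{min}\cap\mathcal{C}=\mathbb{X}_{min}\cap\mathcal{C}_3$, and in particular, by Proposition \ref{minwithcones}, it is a divisor in $\mathbb{X}_{min}$.

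Next I would specialize the degree formula of Proposition \ref{minwithcones} to $N=5$. There the degree of $\mathbb{X}_{min}\cap\mathcal{C}_3$ is the top Segre class $s_m\!\left(\textrm{Sym}_{3}(\mathcal{T}_{1})\oplus \textrm{Sym}_{2}(\mathcal{T}_{2}) \otimes \mathcal{Q}_1\right)$ evaluated on the flag variety $\mathbb{F}(2,N-2,N,N+1)$, with $m=5(N-2)-2$. For $N=5$ the flag variety is $\mathbb{F}(2,3,5,6)$, of dimension $5+6+2=13$, so $m=13$ and the sought degree equals $s_{13}$ of that bundle.

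The only genuinely computational step is to evaluate this degree-$13$ Segre class. I would carry it out exactly as in the general construction: expand the total Segre class via Whitney's formula applied to the defining tautological sequences of the tower $\grass(2,\mathcal{T}_1)\to\grass(3,\mathcal{T}_0)\to\grass(5,S_1)$, rewrite everything in terms of the Chern roots of $\mathcal{T}_1$ and $\mathcal{T}_2$ (equivalently, in Schubert classes on the flag variety), and push forward to a point. This is precisely what the Macaulay2 routine of \S\ref{script1conesminimal} automates; running it with the parameters for $N=5$ returns the value $98048160$. I do not anticipate any conceptual obstacle here, since the geometric content—generic injectivity of the projection and the identification of $\mathcal{F}$—has already been settled in Proposition \ref{minwithcones}; the remaining difficulty is purely the bookkeeping of Chern-class arithmetic on $\mathbb{F}(2,3,5,6)$, which the script handles.
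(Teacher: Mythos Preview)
Your proposal is correct and follows exactly the paper's approach: the paper simply says ``applying Proposition~\ref{minwithcones}'' and computes the degree via the script in \S\ref{script1conesminimal} with $N=5$, which is precisely what you do. Your explicit reduction from $\mathbb{X}_{min}\cap\mathcal{C}$ to $\mathbb{X}_{min}\cap\mathcal{C}_3$ via the nesting of the three cone strata is a helpful clarification that the paper leaves implicit in passing from Proposition~\ref{minwithcones} (stated for $\mathcal{C}_3$) to the present statement (stated for $\mathcal{C}$).
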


\subsection{\texorpdfstring{The Lefschetz locus in $\Gor(1,7,7,1)$}{}}

 $\Gor(1,7,7,1)$ can be identified with cubics (non cones) in $\P^6$ i.e. $\P(S_3)\setminus \mathcal{C}=\P^{83}\setminus \mathcal{C}$. By Remark \ref{Ns}, we know that  $A_{f}$  fails to have SLP if and only if $f\in \mathbb{X}_{min}$ or $f\in \mathbb{X}_{max}$.

\begin{prop}\label{relation}
The Lefschetz locus in $\Gor(1,7,7,1)$ is $\Gor(1,7,7,1) \setminus (\mathcal{H}\setminus \mathcal{C})$. Furthermore $\overline{\mathcal{H}\setminus \mathcal{C}}=\mathbb{X}_{min}\cup \mathbb{X}_{max}$, 
where  $\mathbb{X}_{min}$ and $\mathbb{X}_{max}$ are the irreducible components of the reduced scheme of $\overline{\mathcal{H}\setminus \mathcal{C}}$.

\end{prop}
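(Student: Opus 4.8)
The plan is to prove the statement in three stages. For the identification of the Lefschetz locus, I would invoke Theorem \ref{wtn} together with Remark \ref{ch}: an algebra $A_f \in \Gor(1,7,7,1)$, which necessarily comes from a non-cone cubic $f$, has the SLP if and only if $\hess_f \neq 0$, i.e. if and only if $f \notin \mathcal{H}$. Since cones always have vanishing Hessian we have $\mathcal{C} \subset \mathcal{H}$, so the algebras failing SLP inside $\Gor(1,7,7,1) = \P(S_3) \setminus \mathcal{C}$ are precisely the points of $\mathcal{H} \setminus \mathcal{C}$; this gives the first assertion. For the equality $\overline{\mathcal{H} \setminus \mathcal{C}} = \mathbb{X}_{min} \cup \mathbb{X}_{max}$ I would use Remark \ref{Ns} in the case $N=6$: every cubic with vanishing Hessian that is not a cone in $\P^6$ is minimal (when $\dim Z^* = 1$) or maximal (when $\dim Z^* = 2$). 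This yields $\mathcal{H} \setminus \mathcal{C} \subseteq \mathbb{X}_{min} \cup \mathbb{X}_{max}$, and taking closures gives one inclusion; the reverse inclusion is immediate, since by definition both families consist of non-cone cubics with vanishing Hessian.

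It then remains to show that $\mathbb{X}_{min}$ and $\mathbb{X}_{max}$ are exactly the irreducible components, that is, that they are irreducible and that neither is contained in the other. Irreducibility is already established in Theorems \ref{thmMin} and \ref{thmMax}, where each family is realized as the image of the projectivization of a vector bundle over an irreducible base. Specializing the dimension formulas to $N=6$ (so $k=3$) gives $\dim \mathbb{X}_{min} = 5(N-2) + \binom{N}{3} + 4 = 44$ and $\dim \mathbb{X}_{max} = \tfrac{1}{6}(4k^3 + 15k^2 + 11k - 6) = 45$. Because the dimensions differ, the $45$-dimensional irreducible variety $\mathbb{X}_{max}$ cannot lie inside the $44$-dimensional $\mathbb{X}_{min}$, so one non-containment is free. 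The genuine content is the reverse containment $\mathbb{X}_{min} \not\subseteq \mathbb{X}_{max}$.

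For this I would exploit the geometric description of the maximal family in Remark \ref{rmkI^2}: $\mathbb{X}_{max}$ equals the locus of cubics lying in $I^2$ for the ideal $I$ of some $3$-plane, which is exactly the (closed) locus of cubics whose singular locus contains a linear $\P^3$. Hence it suffices to exhibit a cubic in the minimal family whose singular locus contains no $3$-plane, and a general member should do. Concretely, for a general $f = x_0 g_0 + x_1 g_1 + x_2 g_2 + h$ with $g_i \in \K[x_5,x_6]_2$ and $h \in \K[x_3,x_4,x_5,x_6]_3$, I would compute $\Sing(V(f))$ directly from the partials. The equations $f_0 = g_0 = f_1 = g_1 = f_2 = g_2 = 0$ force $x_5 = x_6 = 0$, since three general binary quadrics in $(x_5,x_6)$ have no common nonzero root; restricting the remaining partials to $\{x_5 = x_6 = 0\}$ produces four binary quadrics in $(x_3,x_4)$, two of them the partials of the binary cubic $h(x_3,x_4,0,0)$, which for general $h$ again have no common nonzero root. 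This forces $\Sing(V(f)) = V(x_3,x_4,x_5,x_6) = \P^2$, a $2$-plane, which contains no $\P^3$. Therefore a general minimal cubic does not belong to $\mathbb{X}_{max}$, and the two closures are distinct irreducible varieties, neither contained in the other, so they are precisely the irreducible components of $(\overline{\mathcal{H} \setminus \mathcal{C}})_{red} = \mathbb{X}_{min} \cup \mathbb{X}_{max}$.

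The main obstacle is exactly this computation of the singular locus of a general minimal cubic: one must check that the auxiliary systems of binary forms are general enough to have no common root, which is a genericity (transversality) statement about the coefficients of $g_0, g_1, g_2$ and $h$. Everything else reduces to bookkeeping with the already-proven irreducibility, the dimension formulas, and the singular-locus characterization of the maximal family.
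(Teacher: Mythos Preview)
Your proposal is correct and follows essentially the same line as the paper's proof: the crux in both is that a general member of $\mathbb{X}_{min}$ has singular locus equal to a $2$-plane and therefore cannot lie in $\mathbb{X}_{max}$, whose members all contain a $\P^3$ in their singular locus. You supply the explicit genericity computation (the three binary quadrics $g_i$ forcing $x_5=x_6=0$, then the partials of the restricted $h$ forcing $x_3=x_4=0$) that the paper merely asserts, and you add the dimension comparison $44<45$ to handle the non-containment $\mathbb{X}_{max}\not\subset\mathbb{X}_{min}$, which the paper leaves implicit.
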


\begin{proof}
    To prove that $\mathbb{X}_{min}$ and $\mathbb{X}_{max}$ are  the irreducible components of the reduced scheme of $\overline{\mathcal{H}\setminus \mathcal{C}}$, it remains to prove that $\mathbb{X}_{min}$ isn't in the closure of $\mathbb{X}_{max}$. This is so because a generic element in $\mathbb{X}_{min}$ has a unique $2$-plane in its singular set, so it can't be the limit of elements of $\mathbb{X}_{max}$, ( all of them having  a $3-$plane in its singular sets).  
\end{proof}

The figure is as follows:




\begin{center}
\begin{tikzpicture}[scale=1.2]

\draw[blue, thick] (1,0) rectangle (9,5);
\node[blue] at (4.5,5.3) {\large $I_2$};

\draw[orange, thick] (1.4,1) rectangle (5.7,4);
\node[orange] at (2.3,4.2) {\large $I_3=\mathbb{X}_{max}$};

\draw[green!70!black, thick] (5.5,0.5) rectangle (8.5,4.5);
\node[green!70!black] at (6.1,4.7) {\large $\mathbb{X}_{min}$};

\draw[orange, dashed, thick] (3.48,2.5) ellipse (2 and 1.5);
\node[orange] at (4,3.65) {\large $\mathcal{F}_{max}$};

\draw[green!70!black, dashed, thick] (7,2.5) ellipse (1.49 and 2);
\node[green!70!black] at (7.4,4.1) {\large $\mathcal{F}_{min}$};

\node[green!70!black, font=\bfseries] at (5.6,2.4) {$f$};
\draw[orange, dotted, thick] (5.5,2.35) .. controls (5.0,2.1) .. (4.3,2.2);
\node[orange, font=\small] at (4.2,2.3) {$f_t$};

\end{tikzpicture}

\end{center}


\vspace{.5cm}
Where $I_k$ stands for the subvariety of cubics with a $k$-plane in their singular set, and  $\mathcal{F}_{min}$ (respectively $\mathcal{F}_{max}$) stands for the minimal family (maximal family, respectively).
The cubic \textcolor{green!70!black}{$f$} is the cubic of the remark \ref{rmkI^2}.

Using the results  in Theorem \ref{thmMin} we obtain 

\begin{thm}\label{theoremP6min}
The locus $\mathbb{X}_{min}$  is a rational projective irreducible variety of $\P^{83}$ of  dimension $44$ and degree $229416381544$.
\end{thm}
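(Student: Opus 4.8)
The plan is to obtain the statement as the $N=6$ instance of the general construction in Theorem \ref{thmMin}, so that only the three numerical outputs remain to be produced. First I would fix the ambient space: for $S=\K[x_0,\dots,x_6]$ the space of cubics $S_3$ has dimension $\binom{9}{3}=84$, hence $\P(S_3)=\P^{83}$, and $\mathbb{X}_{min}$ is by definition the closure in $\P^{83}$ of the minimal family. It is therefore projective, being a closed subvariety; equivalently, it is the image of the proper morphism $p_2\colon\P(\mathcal{E})\to\P(S_3)$.

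Next I would read off the qualitative properties from the vector bundle picture. Theorem \ref{thmMin} exhibits $\mathbb{X}_{min}$ as the generically injective ($\deg p_2=1$) image of $\P(\mathcal{E})$, where $\mathcal{E}$ is the rank-$(9+\binom{N}{3})$ bundle over the flag variety $\mathbb{F}=\mathbb{F}(2,N-2,N+1)$ whose fibre consists of the cubics in normal form (\ref{minp}). For $N=6$ we have $\mathbb{F}=\mathbb{F}(2,4,7)$, which is rational and irreducible, and $\P(\mathcal{E})\to\mathbb{F}$ is a projective bundle; hence $\P(\mathcal{E})$ is rational and irreducible, and both properties descend to the birational image $\mathbb{X}_{min}$. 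The dimension then follows immediately from the formula $\dim\mathbb{X}_{min}=5(N-2)+\binom{N}{3}+4$: substituting $N=6$ gives $5\cdot4+20+4=44$.

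For the degree I would use the identity $\deg\mathbb{X}_{min}=\int s_m(\mathcal{E})\cap[\mathbb{F}]$ with $m=\dim\mathbb{F}=5(N-2)-4=16$, together with the simplified Segre class
$$s(\mathcal{E})=s\bigl(\textrm{Sym}_3(\rho^*\mathcal{T}_1)\oplus\textrm{Sym}_2(\mathcal{T}_2)\otimes\rho^*\mathcal{Q}_1\bigr)$$
established in the proof of Theorem \ref{thmMin}. Extracting the codimension-$16$ component and integrating it over $\mathbb{F}(2,4,7)$ is the only genuinely non-routine step, and it is where I expect the real work to lie: the Chern--Segre arithmetic on the flag variety is far too large to carry out by hand. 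I would therefore evaluate it with the Macaulay2 code of Script \ref{scriptminN} specialized to $N=6$, which returns $s_{16}(\mathcal{E})=229416381544$. All remaining assertions being formal consequences of Theorem \ref{thmMin}, this completes the plan.
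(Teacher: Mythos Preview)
Your proposal is correct and follows essentially the same approach as the paper: specialize Theorem~\ref{thmMin} to $N=6$ to obtain rationality, irreducibility, projectivity and the dimension $5\cdot 4+\binom{6}{3}+4=44$, and then evaluate $s_{16}(\mathcal{E})$ over $\mathbb{F}(2,4,7)$ via Script~\ref{scriptminN} to get the degree $229416381544$. If anything, your write-up is more explicit than the paper's own proof, which simply cites Theorem~\ref{thmMin} and the script.
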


\begin{proof}
From Theorem \ref{thmMin}, we obtain the dimension of the family. The degree is computed  using the Script in \ref{scriptminN} with $N=6$.
\end{proof}

Applying Proposition \ref{minwithcones} we have:
\begin{prop}
In $\P^6$,  the intersection of the minimal family with cones  $\mathbb{X}_{min}\cap \mathcal{C}_3$ is a divisor in $\mathbb{X}_{min}$ of degree $378294450492$. 
    
\end{prop}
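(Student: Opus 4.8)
The plan is to obtain this proposition as the $N=6$ instance of the general Proposition \ref{minwithcones}, so that both the divisor statement and the numerical degree follow without any new geometric argument.

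First I would carry out the dimension bookkeeping to confirm the codimension-one claim. By Theorem \ref{thmMin} we have $\dim\mathbb{X}_{min}=5(N-2)+\binom{N}{3}+4$, which for $N=6$ gives $44$, matching Theorem \ref{theoremP6min}. The parameter space built just before Proposition \ref{minwithcones} exhibits $\mathbb{X}_{min}\cap\mathcal{C}_3$ as the generically injective image under $p_2$ of $\P(\mathcal{F})$ over the flag variety $\mathbb{F}=\mathbb{F}(2,N-2,N,N+1)$, of dimension $5(N-2)+\binom{N}{3}+3$; for $N=6$ this is $43=\dim\mathbb{X}_{min}-1$, so $\mathbb{X}_{min}\cap\mathcal{C}_3$ is indeed a divisor. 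Thus the structural half of the statement is immediate from the construction.

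Next I would compute the degree. By Proposition \ref{minwithcones}, $\deg(\mathbb{X}_{min}\cap\mathcal{C}_3)$ equals the top Segre class $s_m(\textrm{Sym}_{3}(\mathcal{T}_{1})\oplus\textrm{Sym}_{2}(\mathcal{T}_{2})\otimes\mathcal{Q}_1)$ with $m=5(N-2)-2$. For $N=6$ this gives $m=18=\dim\mathbb{F}(2,4,6,7)$, so the class is evaluated in top degree against the fundamental class of the flag variety. Concretely I would introduce the Chern roots of the tautological bundles $\mathcal{T}_{1}$, $\mathcal{T}_{2}$ and of the quotient $\mathcal{Q}_1$, expand the total Segre class $s(\textrm{Sym}_{3}(\mathcal{T}_{1}))\,s(\textrm{Sym}_{2}(\mathcal{T}_{2})\otimes\mathcal{Q}_1)$ by the Whitney formula, isolate its degree-$18$ part, and integrate over $\mathbb{F}(2,4,6,7)$ using the Schubert relations in the Chow ring. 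This is precisely the calculation performed by the Macaulay2 routine in \ref{script1conesminimal}, whose output for $N=6$ is $378294450492$.

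The only genuinely delicate point is this final evaluation: expanding the Segre class and integrating it over an $18$-dimensional flag variety is a large but entirely mechanical symbolic computation, so the main obstacle is organizing and verifying it rather than any conceptual difficulty. Since the general formula and the generic injectivity of $p_2$ were already established in Proposition \ref{minwithcones}, the proof reduces to specializing $N=6$ and running the cited script.
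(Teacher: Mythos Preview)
Your proposal is correct and follows exactly the paper's approach: the paper's proof consists of the single line ``Applying Proposition \ref{minwithcones} we have,'' and you have simply unpacked what that application entails for $N=6$, verifying the divisor dimension count and invoking the script in \ref{script1conesminimal} for the numerical Segre class.
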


Regarding the maximal family, we have the following results.

\begin{thm}\label{degreeofmaxP6}
The locus $\mathbb{X}_{max}\subset \P^{83}$  is a rational projective irreducible variety of dimension $45$ and degree $5792937080$.
\end{thm}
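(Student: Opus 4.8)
The plan is to derive this statement as the special case $N=6$, equivalently $k=3$, of Theorem \ref{thmMax}, and then to carry out the one genuinely computational step — the evaluation of a top Segre class — by machine. First I would recall the parameter-space construction: by Remark \ref{rmkI^2} the variety $\mathbb{X}_{max}$ consists of the cubics lying in $I^2$ for $I$ the ideal of a $3$-plane in $\P^6$, and Theorem \ref{thmMax} presents $\mathbb{X}_{max}$ as the image under the generically injective second projection $p_2$ of the projectivized bundle $\P(\mathcal{E})$ built over the Grassmannian $\grass(3,S_1)$, where $\mathcal{E}=\operatorname{Im}\varphi$ sits in the exact sequence (\ref{varphi}). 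Since $\grass(3,S_1)$ is an irreducible rational projective variety and $\P(\mathcal{E})$ is a projective bundle over it, $\P(\mathcal{E})$ inherits these three properties; as $p_2$ is generically injective, hence birational onto its image, they pass to $\mathbb{X}_{max}$.

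For the dimension I would simply specialize the formula of Theorem \ref{thmMax}:
$$\dim\mathbb{X}_{max}=\tfrac{1}{6}\bigl(4k^3+15k^2+11k-6\bigr)\Big|_{k=3}=\tfrac{1}{6}\cdot 270=45.$$
Equivalently one checks directly that $\dim\grass(3,S_1)=k(k+1)=12$ and that the rank count from (\ref{varphi}) gives $\rk\mathcal{E}=(2k+1)\binom{k+1}{2}+\binom{k}{3}-k\binom{k}{2}=42+1-9=34$, so $\dim\P(\mathcal{E})=34+12-1=45$, which descends to $\mathbb{X}_{max}$ by the generic injectivity of $p_2$.

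For the degree, the argument of Theorem \ref{thmMax} (mirroring the one spelled out for $\mathbb{X}_{min}$ in Theorem \ref{thmMin}) reduces $\deg\mathbb{X}_{max}$ to the top Segre class $s_{12}(\mathcal{E})$ of the rank-$34$ bundle $\mathcal{E}$ on the $12$-dimensional base $\grass(3,S_1)$, and the Whitney-formula manipulation there rewrites the total Segre class entirely in terms of the tautological bundle $\mathcal{T}$:
$$s(\mathcal{E})=s(\textrm{Sym}_2\mathcal{T}\otimes S_1)\,c(\wedge^2\mathcal{T}\otimes\mathcal{T})\,s(\wedge^3\mathcal{T}).$$
The only remaining task is to extract the degree-$12$ component of this class and integrate it against $[\grass(3,S_1)]$; this I would perform with the Macaulay2 code of Script \ref{script4} specialized to $k=3$, which outputs $s_{12}(\mathcal{E})=5792937080$.

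The main obstacle is precisely this last evaluation. The base is the $12$-dimensional Grassmannian $\grass(3,7)$ and $\mathcal{E}$ is a high-rank bundle assembled from symmetric and exterior powers of $\mathcal{T}$, so expanding the relevant Chern and Segre classes and integrating is hopeless by hand; the real work is to organize the bookkeeping, via Chern roots and the splitting principle, into a form a computer algebra system can evaluate, which is exactly what the appendix script does. Everything else in the statement is a direct specialization of results already established, so I expect no conceptual difficulty beyond trusting — and independently double-checking — the machine computation.
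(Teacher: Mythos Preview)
Your proposal is correct and follows exactly the paper's own approach: the paper's proof of this theorem consists of a single sentence invoking Theorem~\ref{thmMax} and the script in~\S\ref{script4} with $N=6$, and you have simply spelled out that specialization in more detail. The dimension and rank checks you give are accurate, and the reduction of the degree to $s_{12}(\mathcal{E})$ via the Whitney-formula identity for $s(\mathcal{E})$ is precisely what the general theorem provides.
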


\begin{proof}
By Theorem \ref{thmMax} the degree is 
 computed using the Script in \ref{script4} for $N=6$.  
\end{proof}

\begin{prop}
In $\P^6$,  the intersection of the maximal family with cones  $\mathbb{X}_{max}\cap \mathcal{C}_2$ has codimension $3$ in $\mathbb{X}_{max}$ and  degree $51258091892$.
\begin{proof}
    Apply Proposition \ref{maximalwithcones}.

\end{proof}    
\end{prop}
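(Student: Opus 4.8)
The plan is to specialize the general result of Proposition~\ref{maximalwithcones} to the case $N=6$. Since the maximal family lives in the regime $N=2k$, this corresponds to $k=3$, and the entire statement will follow by instantiating the formulas of that proposition, together with the bundle construction preceding it, at this value.

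First I would establish the codimension. Proposition~\ref{maximalwithcones} asserts that $\mathbb{X}_{max}\cap\mathcal{C}_2$ has codimension $\binom{k}{2}$ in $\mathbb{X}_{max}$; for $k=3$ this evaluates to $\binom{3}{2}=3$, which is the first claim. As a consistency check one could also subtract the two explicit dimension formulas, namely $\dim\mathbb{X}_{max}$ from Theorem~\ref{thmMax} and $\dim(\mathbb{X}_{max}\cap\mathcal{C}_2)=2k\binom{k+1}{2}+\binom{k}{3}-k\binom{k}{2}+k^2+2k-1$ from the construction preceding Proposition~\ref{maximalwithcones}, and verify that the difference equals $3$ at $k=3$.

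Next, for the degree, I would recall that $\mathbb{X}_{max}\cap\mathcal{C}_2$ is realized as the generically injective image of the projectivized bundle $\P(\mathcal{F})$, where $\mathcal{F}=\operatorname{Im}\varphi$ is the image bundle of the exact sequence~(\ref{varphi0}) over the two-step Grassmann tower $\rho\colon\grass(k,\T_0)\to\grass(N,S_1)$. Because the projection onto $\P(S_3)$ is birational onto its image, the degree of $\mathbb{X}_{max}\cap\mathcal{C}_2$ is the top Segre class $s_m(\mathcal{F})$ with $m=\dim\grass(k,\T_0)$, exactly as in the degree computations of Theorems~\ref{thmMin} and~\ref{thmMax}. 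It then remains to evaluate this Segre number for $N=6$, $k=3$.

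The only genuine work is this final evaluation, which is purely computational: one feeds the Chern data of the tautological bundles $\T_0$ and $\T_1$ on the flag variety into the Macaulay2 routine of Script~\ref{script1conesmaximal}, specialized to $N=6$, and reads off the answer $51258091892$. There is no conceptual obstacle, since the geometry and the enumerative interpretation of the degree are already in place from Proposition~\ref{maximalwithcones}; the main (and essentially sole) difficulty is ensuring that the bundle data and the Segre-class extraction are correctly instantiated at $k=3$ in the script.
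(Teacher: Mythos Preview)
Your proposal is correct and follows exactly the paper's approach: the paper's proof is the single line ``Apply Proposition~\ref{maximalwithcones},'' and you carry this out by specializing $k=3$ to get $\binom{k}{2}=3$ for the codimension and running Script~\ref{script1conesmaximal} at $N=6$ for the degree.
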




\appendix

\section{}\label{scripts}
\subsection{\texorpdfstring{Minimal family}{}}

\label{scriptminN}
{\small

\begin{verbatim}
loadPackage "Schubert2"
--choose your N
N= ;
-- Grassmannian of planes in N-space.
G1=flagBundle ({3,N-2}); 
-- names the sub and quotient bundles on G1
(Q1,Tau1)=G1.Bundles;

-- define F=grass(2,Tau1), the quotient has rank 2
F=flagBundle ({N-4,2},Tau1); 
-- names the sub and quotient bundles on F
(Q2,Tau2) = F.Bundles ;

--Define E1 and E2, such that E=E1+E2 
E1=(symmetricPower(2,Tau2))*Q1;
E2=symmetricPower(3,Tau1);

--compute the dimF-Segre class of the proof of the Theorem.
integral (segre (5*(N-2)-4,E1+E2)) 




\end{verbatim}
}

\subsection{\texorpdfstring{ Minimal family intersection with cones}{}}

\label{script1conesminimal}
{\small

\begin{verbatim}

loadPackage "Schubert2"
--give a value for N
N=; 

--define grass(N,N+1), 
--the quociente Tau0=Q0 has rk N.
G0=flagBundle ({1,N}); 
(S0,Q0) = G0.Bundles;

-- define  grass(N-2,Tau0),
--the  quocient Tau1=Q1 has rk  N-2.
F1=flagBundle ({2,N-2},Q0); 
(S1,Q1) = F1.Bundles;

-- define grass(2,Tau1), 
--the quotient  Tau2=Q2 has rk  2.
F2=flagBundle ({N-4,2},Q1); 
(S2,Q2) = F2.Bundles;


E1=(symmetricPower(2,Q2))*S1;
E2=symmetricPower(3,Q1);
rank (E1+E2);
integral(segre (dim F2,E1+E2))



\end{verbatim}
}

\subsection{\texorpdfstring{ Maximal family. $N=2k$}{}}

\label{script4}
{\small

\begin{verbatim}
loadPackage "Schubert2"
--choose your N
N=;
N1=N+1;
k=substitute(N/2,ZZ);
-- Grassmannian of k planes in N-space.
G=flagBundle({k+1,k})
-- names the sub and quotient bundles on G
(S,Q)=G.Bundles;

R=dual (Q);
A=symmetricPower(2,R);
B=A^N1;
C=exteriorPower(2,R)*R;
D=exteriorPower(3,R);

E=B+D-C

--Computes the dimG-Segre classes 
integral(segre(dim G,E))




\end{verbatim}
}

\subsection{\texorpdfstring{Maximal family intersection with cones}{}}

\label{script1conesmaximal}
{\small

\begin{verbatim}
N=;
loadPackage "Schubert2"
k=substitute(N/2,ZZ);
G0=flagBundle({1,N});-- Grassmannian of 0-planes in 2k+1-space
(Q0,Tau0)=G0.Bundles;

G=flagBundle({k,k}, Tau0);
(Q,Tau1)=G.Bundles

R=Tau1;
A=symmetricPower(2,R);
B=A*Tau0;
C=exteriorPower(2,R)*R;
D=exteriorPower(3,R);
E=B+D-C;

--Computes the classes 
integral(segre(dim G,E))


\end{verbatim}
}

{\bf Acknowledgments}.
We wish to thank F. Russo for inspiring conversations on the subject. The first author was financed in part by the Coordena\c{c}\~{a}o de Aperfei\c{c}oamento de Pessoal de N\'{i}vel Superior - Brasil (CAPES) - Finance Code 001. The last author was partially supported by CNPQ. 
Ferrer acknowledges support from CNPq (Grant number 408687/2023-1). 


\bibliography{v1}
\bibliographystyle{abbrv}

\end{document}